\documentclass{amsart}
\hsize=6.5 true in \vsize=9 true in 
 
 \baselineskip=16pt
\usepackage{amsmath, amssymb, euscript,  enumerate}

\newtheorem{thm}{Theorem}[section]
\newtheorem{cor}[thm]{Corollary}
\newtheorem{lem}[thm]{Lemma}
\newtheorem{claim}[thm]{Claim}
\newtheorem{remark}[thm]{Remark}
\newtheorem{prop}[thm]{Proposition}
\newtheorem{defn}[thm]{Definition}
\theoremstyle{remark}
\newtheorem{rem}[thm]{Remark}
\numberwithin{equation}{section}
\theoremstyle{plain}


\def\no{\noindent} 

\newcommand{\R}{\mathbb R}

\newcommand{\A}{\mathcal A}

\newcommand{\D}{\mathcal D}

\newcommand{\Si}{\mathcal S}
\newcommand{\F}{\mathcal F}
\newcommand{\e}{\epsilon}

\newcommand{\dist}{\mathrm{dist}}

\newcommand{\diam}{\mathrm{diam}}

\newcommand{\Int}{\mathrm{Int}}
\newcommand{\Sit}{\Sigma^i_t}
\newcommand{\Sii}{\Sigma^i}


\begin{document}

\title{On the evolution of  convex hypersurfaces \\
by the $Q_k$ flow}

\author{M. Cristina Caputo$^*$}

\address{Department of
Mathematics, University Texas at Austin, Texas,
 USA}
\email{caputo@math.utexas.edu}

\author{Panagiota Daskalopoulos$^{**}$}

\address{Department of
Mathematics, Columbia University, New York,
 USA}
\email{pdaskalo@math.columbia.edu}

\author{Natasa Sesum$^{***}$}
\address{Department of Mathematics, Columbia University, New York,
USA}
\email{natasas@math.columbia.edu}

\thanks{$*:$ Partially supported
by NSF grant  0354639}
\thanks{$**:$ Partially supported
by NSF grant  0701045}
\thanks{$***:$ Partially supported
by NSF grant 0604657}

\begin{abstract}
We prove  the
existence and uniqueness of a $C^{1,1}$ solution of the $Q_k$ flow in the  viscosity sense for  compact  convex hypersurfaces 
$\Sigma_t$ embedded in $R^{n+1}$ ($n \geq 2$) . 
In particular,  for compact convex hypersurfaces with flat sides we show that,  under a certain non-degeneracy 
 initial condition,
 the interface
separating the flat from the strictly convex side, becomes smooth, and it
moves by the $Q_{k-1}$ flow at least for a short time.

\end{abstract}

\maketitle

\section{Introduction}

We consider, in this paper, the evolution of a compact  convex hypersurface 
$\Sigma_t$ embedded in $R^{n+1}$ ($n \geq 2$)  by the $Q_k-$flow for $1\leq k\leq n$, namely the equation
$$\frac{{\bf \partial P}}{\partial t} = - Q_k \, {\bf  \nu}$$
where each point $\bf P$ of the surface $\Sigma_t$ moves in the direction of its 
inner normal vector $\nu$
by a speed which is equal to the quotient 
$$Q_k(\lambda) =
\frac{S_k^n(\lambda)}{S_{k-1}^n(\lambda)}$$  
of   successive elementary symmetric polynomials of the principal curvatures $\lambda
= (\lambda_1, \dots, \lambda_n)$ of $\Sigma_t$. We recall that 
$$S_k^n(\lambda) = \sum_{1\leq i_1 < \cdots < i_k \leq n} \lambda_{i_1} \cdots \lambda_{i_k}.$$

Given a compact   convex hypersurface $\Sigma_0$ embedded  in  $\R^{n+1}$ 
($n \geq 2$), we let    
$F_0: M^n\to\R^{n+1}$ be  the immersion 
defining  $\Sigma_0$. We look for one parameter family of immersions 
$F:M^n\times [0,T)\to \R^{n+1}$ satisfying
\begin{equation}\label{eqn-qk}
\begin{cases}
\frac{\partial}{\partial t}F(p,t) &= -Q_k(p,t) \cdot\nu(p,t), \quad  \forall p\in M^n, t\ge 0, \nonumber \\
\,\,\, F(\cdot,0) &= F_0
\end{cases}
\tag{$*_k$}
\end{equation}
where $\nu$ is the outer unit normal and $Q_k(\lambda)$ is a  quotient of successive
elementary symmetric polynomials of the principal curvatures of $\Sigma_t = F(M^n,t)$. 

In
\cite{An1} Andrews proved that for any strictly convex hypersurface in
$\mathbb{R}^{n+1}$ the solution to (\ref{eqn-qk}) exists up to some 
finite time $T$ at which it shrinks to a point in an asymptotically
spherical manner. In \cite{D} Dieter considered smooth, convex
hypersurfaces in $\mathbb{R}^{n+1}$ with $S_{k-1}^n(\lambda) > 0$. Using
that condition she constructed cylindrically symmetric barriers to
prove that instantenously at time $t > 0$ the speed has a uniform
lower bound from below, and therefore the flow becomes strictly
parabolic.

\medskip

In this article we will consider  convex hypersurfaces, with no
assumption on $S_{k-1}^n(\lambda)$. In the first part of the paper we will  establish  the
existence and uniqueness of a $C^{1,1}$ solution of (\ref{eqn-qk}) in the  viscosity sense.  We
give  the following definition of a viscosity solution suggested by Andrews in \cite{An2}.

\begin{defn}
\label{def-visc}
A family of convex regions $\{\Omega_t\}_{0 <t<T}$ is called a {\it
viscosity solution} of (\ref{eqn-qk}) if the following
holds. For any smooth, strictly convex hypersurface $N_0$ contained
in $\Omega_{t_0}$, for some $t_0\in (0,T)$, the hypersurfaces $N_t$
given by solving (\ref{eqn-qk}) are contained in $\Omega_{t_0+t}$,
for all $t\in [0,T-t_0)$ in the time interval  of existence of
$N_t$. Furtermore, for any smooth, strictly convex hypersurface
$N_0$ which encloses $\Omega_{t_0}$, for some $t_0\in (0,T)$, the
hypersurfaces $N_t$ enclose $\Omega_{t_0+t}$,  for all $t\in
[0,T-t_0)$.
\end{defn}

Our first result states as follows:

\begin{thm}\label{thm-main1}
Let $\Sigma_0$ be a compact  convex  hypersurface in
$\mathbb{R}^{n+1}$ which is of class $C^{1,1}$. Then, there exists  a unique viscosity solution 
of  the \eqref{eqn-qk} flow  which is of class $C^{1,1}$. The solution exists 
up to the  time $T < \infty$ at which the enclosed volume becomes zero.
\end{thm}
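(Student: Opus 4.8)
The plan is to obtain the $C^{1,1}$ viscosity solution as a limit of smooth flows of strictly convex hypersurfaces, using Andrews' existence result from \cite{An1} together with the comparison principle that is built into Definition~\ref{def-visc}. First I would approximate $\Sigma_0$ from outside by a decreasing sequence of smooth, strictly convex hypersurfaces $\Sigma_0^\e$ with $\Sigma_0 \subset \Omega_0^\e$ and $\Sigma_0^\e \to \Sigma_0$ in $C^{1,1}$; one can take, for instance, the boundary of an $\e$-neighborhood of $\Omega_0$ smoothed out, or a mollification of the support function. By \cite{An1} each $\Sigma_0^\e$ evolves by \eqref{eqn-qk} to a smooth strictly convex solution $\Sigma_t^\e$ on a maximal time interval $[0, T_\e)$, shrinking to a point at $T_\e$. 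The avoidance principle for strictly convex smooth solutions (again from \cite{An1}, via the maximum principle applied to the difference of support functions) shows that the enclosed regions $\Omega_t^\e$ are nested as $\e$ decreases, so we may define $\Omega_t := \bigcap_\e \Omega_t^\e$, or equivalently pass to the decreasing limit of the support functions $h^\e(\cdot,t)$.

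The next step is to establish uniform regularity estimates that survive this limit. The key a priori bounds are: (i) convexity is preserved, so each $\Sigma_t^\e$ is convex and hence the limit is convex; (ii) a uniform speed bound from above, $Q_k \le C/t$ on compact time intervals, which follows from comparison with shrinking spheres or from the Andrews-type estimate $\partial_t h \le 0$ combined with convexity — this gives a uniform Lipschitz bound in time; (iii) a uniform gradient bound in space, which is automatic from convexity once we know the hypersurfaces stay in a fixed bounded region and do not collapse before a definite time $T$, so that the inradius stays positive on $[0,T-\delta]$. The combination of a uniform time-Lipschitz bound and the convexity-induced spatial $C^{1,1}$ control (bounded second fundamental form is \emph{not} uniform, but the one-sided bound from convexity plus the barrier argument of Dieter \cite{D} away from $t=0$, or simply the intrinsic $C^{1,1}$ bound coming from enclosing and being enclosed by balls of controlled radii) yields that $\Omega_t$ is a $C^{1,1}$ convex body for each $t<T$ and that $t \mapsto \Omega_t$ is continuous. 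One then checks directly that the limit satisfies the two comparison requirements of Definition~\ref{def-visc}: if $N_0$ is a smooth strictly convex hypersurface inside $\Omega_{t_0}$, then $N_0 \subset \Omega_{t_0}^\e$ for every $\e$, Andrews' comparison for the smooth flows gives $N_t \subset \Omega_{t_0+t}^\e$ for all $\e$, and intersecting over $\e$ gives $N_t \subset \Omega_{t_0+t}$; the enclosing case is symmetric, using an approximation of $\Sigma_0$ from \emph{inside} (or directly the outer approximation together with the fact that the outer barriers $N_t$ enclose each $\Sigma_t^\e$).

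For uniqueness, suppose $\{\Omega_t\}$ and $\{\tilde\Omega_t\}$ are two $C^{1,1}$ viscosity solutions with the same initial data. The idea is to insert smooth strictly convex hypersurfaces between them: given $t_0$ and a smooth strictly convex $N_0$ slightly inside $\Omega_{t_0}$, the definition forces $N_t \subset \tilde\Omega_{t_0+t}$ as well (after letting $N_0$ approximate $\partial\Omega_{t_0}$ and using continuity in $t_0 \downarrow 0$ together with the shared initial data), hence $\Omega_t \subset \tilde\Omega_t$; by symmetry the reverse inclusion holds. The only subtlety is the behaviour as $t_0 \to 0$: one must show that a viscosity solution attains the initial data continuously, which follows from trapping $\Sigma_t$ between inner and outer smooth flows starting from nearby smooth strictly convex hypersurfaces that converge to $\Sigma_0$, whose own existence times are bounded below and which stay $C^0$-close to $\Sigma_0$ on a short interval. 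Finally, the extinction time $T$ is finite and characterized by vanishing enclosed volume because $\Omega_t$ is trapped between the shrinking-sphere solutions through the inscribed and circumscribed balls of $\Omega_0$, both of which reach zero volume in finite time.

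The main obstacle I anticipate is item (iii) above made uniform in $\e$: controlling the solution near $t=0$ where the speed $Q_k$ may be unbounded (because $\Sigma_0$ need not be strictly convex and $S_{k-1}^n$ may vanish), and simultaneously away from the extinction time. Establishing that the $\e$-flows do not lose the $C^{1,1}$ bound of the initial data in an $\e$-uniform way — i.e. a genuine $C^{1,1}$ (not merely $C^1$) estimate stable under the approximation — is the delicate point; I would handle it by a barrier construction in the spirit of Dieter \cite{D}, placing suitable cylindrically symmetric sub- and super-solutions touching $\Sigma_t^\e$ from inside and outside, which both bounds the curvature from above on $[\delta, T-\delta]$ and, by the one-sided nature of convexity, pins down the full $C^{1,1}$ norm of the limit.
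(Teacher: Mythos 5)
Your overall architecture --- approximate by smooth strictly convex flows, extract uniform estimates, pass to the limit, and prove uniqueness by sandwiching --- is the same as the paper's, but the two hardest steps, which you yourself flag as delicate, are handled by different mechanisms in the paper and your proposed substitutes have real gaps.

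\textbf{The $C^{1,1}$ estimate.} You propose to control curvature on $[\delta, T-\delta]$ via Dieter-style cylindrically symmetric barriers. This cannot work here: Dieter's construction requires $S_{k-1}^n>0$ uniformly on the initial data, and the whole point of this theorem (stated explicitly in the introduction) is to drop that hypothesis. Your fallback, ``the intrinsic $C^{1,1}$ bound coming from enclosing and being enclosed by balls of controlled radii,'' only gives a uniform Lipschitz ($C^{0,1}$) bound on the support/graph function --- a convex body with bounded inradius and circumradius can still have arbitrarily large principal curvatures. Moreover you would still lose control as $t\to 0$, whereas the statement needs a bound valid down to $t=0$, depending only on $\rho$ and $\|\Sigma_0\|_{C^{1,1}}$. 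The paper's actual mechanism is a maximum-principle estimate in the style of Andrews: the quantity $\mathcal{F}=\langle F,\nu\rangle + 2tQ_k$ has increasing minimum (using $\sum_i (\partial Q_k/\partial\lambda_i)\lambda_i^2 \ge \frac{k}{n-k+1}Q_k^2$), $\sup H/\mathcal{F}$ is decreasing, and $\sup Q_k/(\langle F,\nu\rangle-\delta)$ is decreasing once $Q_k$ is large. Combined with the pinching lemma $\langle F,\nu\rangle\ge\delta(\rho)$ this yields $|A(t)|\le C(\rho,\|\Sigma_0\|_{C^{1,1}})$ uniformly on $[0,\tau]$, with no positivity assumption on $S_{k-1}^n$ and no loss near $t=0$. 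This is the theorem's essential technical content and your proposal does not supply it.

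\textbf{Uniqueness.} Your argument is circular as stated: you want to insert a smooth strictly convex $N_0$ inside $\Omega_{t_0}$ and conclude $N_t\subset\tilde\Omega_{t_0+t}$, but Definition~\ref{def-visc} only compares $N_t$ with the solution whose domain $N_0$ sits inside, so you first need $N_0\subset\tilde\Omega_{t_0}$, i.e.\ precisely the inclusion you are trying to prove. Appealing to continuity as $t_0\downarrow 0$ does not by itself break the circle, because you still need a quantitative outer barrier that controls both solutions simultaneously. The paper's fix is the dilation trick: take inner approximations $\Sigma^\e_0$ and outer ones $\tilde\Sigma^{\e\delta}_0=(1+\delta)\Sigma^\e_0$ with $\e=\e(\delta)$ chosen so that $\Sigma^\e_0\prec\Sigma_0\prec\tilde\Sigma^{\e\delta}_0$; by the definition of viscosity solution any two solutions are both trapped between $\Sigma^\e_t$ and $\tilde\Sigma^{\e\delta}_t$, and the scaling identity $\tilde F^{\e\delta}(t)=(1+\delta)F^\e(t/(1+\delta)^2)$ together with the uniform curvature bound from Theorem~\ref{thm-strict-conv} gives $|\tilde F^{\e\delta}(t)-F^\e(t)|\le C(\tau)\delta$. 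Sending $\delta\to0$ squeezes the two solutions together. You would need to introduce this (or some equivalent explicit outer-barrier construction) for the uniqueness step to go through.

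Minor remarks: your $Q_k\le C/t$ speed bound is a different (and weaker near $t=0$) type of estimate than what is needed; the paper obtains a bound uniform in $t$ precisely because it invests the initial $C^{1,1}$ regularity. Approximating from outside rather than from inside, as you do, is a cosmetic difference.
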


In the second part of the paper we will consider the special case where the initial surface $\Sigma_0$
has flat sides. We will assume for simplicity that $\Sigma_0$ has only one flat side,  
 namely  $\Sigma_0
=\Sigma^1_0 \cup \Sigma^2_0$ with $\Sigma^1_0$ flat (i.e. it lies on a hyperplane in $\R^{n+1}$)
 and $\Sigma^2_0$  strictly convex. 
Since the equation is invariant under rotation, we may assume that $\Sigma^1_0$ lies on the
$x_{n+1}=0$ plane and that  that $\Sigma^2_0$ lies above this plane. 
Then, the lower part of the surface $\Sigma_0$ can be written
as the graph of a function 
$$x_{n+1}=u(x_1,\cdots,x_n)$$
over a compact domain $\varOmega \subset  {\R}^n$ containing
the initial flat side $\Sigma^1_0$.

We define
$$g:=\sqrt{u}$$
and  we call $g$ the {\em pressure} function. Let $\Gamma_0$ denote the boundary of the flat side $\Sigma^1_0$. Our main assumption on the initial   surface $\Sigma_0$
is that it is of class $C^{1,1}$, the function  $g$ is $C^2$ up to the flat side and it  satisfies the following non-degeneracy condition, which we
call {\it non-degeneracy condition~($\star$)}:

\begin{equation}
 | Dg | \geq \lambda
 \,\,\,\,\text{and}\,\,\,\,\,
(g_{ij})\geq \lambda, \qquad \mbox{on} \,\, \Gamma_0  \tag{$\star$}
\end{equation}
\noindent for some number  $\lambda >0$.\\
For each $1\leq i,j\leq  n-1$, $g_{ij}$ denote the second order
derivatives in the directions given by the vectors $\tau_i$ and $\tau_j$.  For each  $1\leq i\leq  n-1$, $\tau_i$ is defined so that the set $\mbox{Span} \, [\tau_1,\cdots,  \tau_{n-1}]$ is parallel to the tangent hyperplane to the level sets of $g$.

\begin{defn}\label{defn-surface}
We define $\mathfrak S$ to be the class of
 convex  compact hypersurfaces $\Sigma$ in ${\R}^{n+1}$ so that
$\Sigma = \Sigma^1 \cup \Sigma^2$, where $\Sigma^1$ is a surface contained in the hyperplane $x_{n+1}=0$ with smooth boundary  $\Gamma$, 
and $\Sigma^2$ is a strictly convex surface, smooth up to its  boundary $\Gamma$  
 which lies above the hyperplane $x_{n+1}=0$.
 \end{defn}

\begin{rem} Any initial surface  $\Sigma_0$ in the class $\mathfrak S$ is in particular a $C^{1,1}$
surface. Hence, by Theorem \ref{thm-main1},  there exists a unique $C^{1,1}$ solution 
$\Sigma_t$ of \eqref{eqn-qk} with initial data $\Sigma_0$.
\end{rem}

We will assume that $x_{n+1}=u(x_1,\cdots,x_n,t)$ defines the hypersurface $\Sigma_t$ near  the 
hyperplane $x_{n+1}=0$,  with $0\leq t\leq \tau$ for some short time
$\tau>0$. We will set $$g(\cdot,t)=\sqrt{u(\cdot,t)}.$$  Our goal is  to show the following result:

\begin{thm}\label{thm-main2}  Assume that at time
$t=0$, $\Sigma_0$ is a weakly convex  compact hypersurface in ${\R}^{n+1}$
which belongs to the class $\mathfrak S$ so  that the pressure function $g=\sqrt{u}$   is smooth up to the
interface $\Gamma_0$ and it satisfies the condition~($\star$). Let $\Sigma_t$ be the unique 
viscosity solution of \eqref{eqn-qk} for $2\leq k\leq n$ with initial data $\Sigma_0$.  
Then,
there exists a time $\tau>0$  such that  the pressure function $g(\cdot,t)=\sqrt{u(\cdot,t)}$  is smooth up to the interface $x_{n+1}=0$ and satisfies condition~($\star$) for all $t\in [0,\tau)$.
In particular, the interface $\Gamma_t$ between the flat side and
the strictly convex side is a smooth hypersurface for all $t$ in $0<t\leq \tau$ and it
moves by the \em{$*_{k-1}$} flow.
\end{thm}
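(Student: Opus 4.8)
The plan is to reformulate the flow near the flat side as a degenerate parabolic equation for the pressure function $g = \sqrt{u}$, show that this equation is uniformly parabolic near $\Gamma_t$ as long as condition~($\star$) holds, and then run a fixed-point / continuity argument to produce a smooth-up-to-the-interface solution on a short time interval; finally identify the motion of $\Gamma_t$ with the $*_{k-1}$ flow. First I would compute how the $Q_k$ flow translates into an evolution equation for $u=x_{n+1}$: writing $\Sigma_t$ locally as a graph, the principal curvatures and the quotient $Q_k(\lambda)$ become a fully nonlinear function of $Du$ and $D^2u$, and since $u$ vanishes on the flat part, the equation degenerates there like $u \to 0$. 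Substituting $u = g^2$, so that $u_t = 2g g_t$, $Du = 2g Dg$, $D^2u = 2g D^2g + 2 Dg\otimes Dg$, one gets an equation of the form $g_t = g\,\mathcal{F}(Dg, D^2g) + \mathcal{G}(Dg)$ where $\mathcal{F}$ is (quasi)linear elliptic in $D^2g$ with coefficients depending on the curvature quotient structure, and $\mathcal{G}$ encodes the first-order part. The key point — exactly as in the porous-medium / Gauss-curvature-with-flat-sides literature (Daskalopoulos--Hamilton, Daskalopoulos--Lee) — is that near $\Gamma_0$ the coefficient of the "tangential" second derivatives is controlled by $|Dg|^2$ while the coefficient of the normal direction carries a factor $g$; condition~($\star$), namely $|Dg|\ge\lambda$ and $(g_{ij})\ge\lambda$ on $\Gamma_0$, guarantees both that the equation is non-degenerate in the tangential directions and that $g$ grows linearly off the interface, so the degenerate direction is genuinely transverse and the free boundary is non-characteristic.

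Next I would set up weighted Hölder spaces adapted to the degeneracy — spaces in which the distance to $\Gamma_t$ plays the role of a weight, so that $g\,D^2g$-type terms are treated as lower order — and solve the linearized problem in these spaces, obtaining Schauder-type estimates for the model degenerate operator $g\,\partial_{gg} + \partial_{\text{tangential}}^2$. With the linear theory in place, the quasilinear equation for $g$ is solved by a contraction mapping argument on a short time interval $[0,\tau)$: one freezes the coefficients using a candidate $g$, solves the linear degenerate problem, and shows the solution map is a contraction provided $\tau$ is small, using that ($\star$) holds with a definite constant at $t=0$ and hence, by continuity, with constant $\lambda/2$ on $[0,\tau)$ after possibly shrinking $\tau$. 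Higher regularity (smoothness of $g$ up to $\{x_{n+1}=0\}$) follows by differentiating the equation and bootstrapping in the same weighted spaces, the tangential directions being uniformly parabolic and the normal direction being handled by the $g\,\partial_{gg}$ structure. Because the viscosity solution from Theorem~\ref{thm-main1} is unique, this locally-constructed smooth solution must coincide with it, which is what transfers the regularity to $\Sigma_t$.

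Finally, to see that $\Gamma_t$ moves by the $*_{k-1}$ flow: the interface is the level set $\{g(\cdot,t)=0\}$, equivalently $\{u(\cdot,t)=0\}$, so its normal velocity is $-u_t/|Du|$ evaluated in the limit as one approaches $\Gamma_t$ from the convex side, which by $u=g^2$ equals $-\,g_t/|Dg|$ in that limit. Plugging the evolution equation for $g$ into this expression, the term $g\,\mathcal{F}$ drops out (it vanishes on $\Gamma_t$ since $g=0$ there, using that $D^2g$ stays bounded by the regularity just obtained), leaving the first-order contribution $\mathcal{G}(Dg)/|Dg|$; a direct computation of $\mathcal{G}$ shows that this is precisely $Q_{k-1}$ of the principal curvatures of the hypersurface $\Gamma_t \subset \{x_{n+1}=0\}\cong\R^n$, so $\Gamma_t$ evolves by $*_{k-1}$. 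I expect the main obstacle to be the degenerate Schauder estimate for the linearized operator: getting the right weighted function spaces so that the contraction mapping closes, and in particular showing that the non-degeneracy condition~($\star$) is preserved on a uniform time interval rather than instantaneously deteriorating — this is the technical heart of the argument, and it is where the precise structure of $Q_k$ (as opposed to a general curvature function) and the hypothesis $2\le k\le n$ enter, since one needs $S_{k-1}^n$ and the relevant tangential minors to stay positive near the interface.
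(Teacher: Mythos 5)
Your outline diverges from the paper at a structurally crucial point, and the divergence is not cosmetic: the degenerate model operator you propose is the wrong one, and as a consequence the analogy you invoke (porous medium, Gauss curvature with flat sides, Daskalopoulos--Hamilton, Daskalopoulos--Lee) is precisely the analogy the paper is at pains to \emph{disavow}. You model the normal degeneracy as $g\,\partial_{gg}+\partial^2_{\mathrm{tang}}$, i.e.\ a porous-medium-type operator where the diffusion coefficient in the transverse direction vanishes to first order in the distance to the free boundary. For the $Q_k$ flow the vanishing is to \emph{second} order. One way to see this: in the $Q_k$ quotient, near the interface one principal curvature $\lambda_1$ blows up (in the hodograph variable $z=u$ one has $\lambda_1\sim c_1/\sqrt z$) while $\partial Q_k/\partial\lambda_1\sim 1/\lambda_1^2\sim z$; after tracking the chain rule through the hodograph map the linearized coefficient of the transverse second derivative comes out $\sim z^2$, not $\sim z$. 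In terms of the distance $d\sim g\sim\sqrt z$ this is $d^2\partial_{dd}$, not $d\,\partial_{dd}$. The two models are qualitatively different: for $d^2\partial_{dd}$ the boundary sits at infinite distance in the intrinsic metric $dz^2/z^2+|d\bar x|^2$, there is no boundary Hopf lemma, one cannot prescribe data on $\Gamma_t$, and the equation preserves rather than improves regularity. This is exactly the point the paper flags in the remark after Theorem~\ref{thm-main2}. A contraction mapping set up in porous-medium-style distance-weighted H\"older spaces would not close for this operator; the paper instead introduces the hodograph transform $x_1=f(z,x_2,\dots,x_n,t)$ to fix the free boundary and reveal the $z^2\partial_{zz}$ structure, and then works in the spaces $C^{2+\alpha}_{w,s}$ built from the singular hyperbolic metric, proving existence via the inverse function theorem.

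Your identification of the interface velocity with $Q_{k-1}$ is stated but not established. Writing $g_t=g\,\mathcal F+\mathcal G$ and discarding $g\,\mathcal F$ as $g\to 0$ is plausible heuristics, but the assertion that $\mathcal G/|Dg|$ equals the $Q_{k-1}$ of the intrinsic curvatures of $\Gamma_t$ is the entire content of the claim and you do not compute it. The paper's mechanism is more transparent and worth knowing: in the hodograph coordinates the eigenvalues of $[b_{ij}]$ satisfy $\lambda_1\to\infty$ and $\lambda_i\to\bar\lambda_i$ ($i\ge 2$) where $\bar\lambda_i$ are the principal curvatures of $\Gamma_t$; applying L'Hospital's rule to $Q_k=S_k^n/S_{k-1}^n$ as $\lambda_1\to\infty$ gives
\[
\lim_{z\to 0}\frac{S_k^n}{S_{k-1}^n}
=\lim_{z\to 0}\frac{\partial_{\lambda_1}S_k^n}{\partial_{\lambda_1}S_{k-1}^n}
=\frac{S_{k-1}^{n-1}(\bar\lambda_2,\dots,\bar\lambda_n)}{S_{k-2}^{n-1}(\bar\lambda_2,\dots,\bar\lambda_n)},
\]
which is exactly $Q_{k-1}$ of $\Gamma_t$. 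Getting there requires the nontrivial intermediate steps (Claims~\ref{claim-eigen-beh}--\ref{claim-eigen-interface}) matching the eigenvalues $\lambda_i$, $i\ge 2$, with the tangential Hessian entries $f_{ii}$ and with the interface curvatures; none of this is implicit in your outline. In short: you need the hodograph transform both to obtain the correct degenerate model and to make the $Q_{k-1}$ identification work, and the porous-medium template you are following will not substitute for it.
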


\begin{rem} In the case of a two-dimensional surface in $\R^3$ the flow \eqref{eqn-qk} becomes
the well studied harmonic mean curvature flow. In this case, Theorem \ref{thm-main2}  was 
established in \cite{CD}.  
Following the result in \cite{CD} one may consider a pressure function $g=u^p$,  for any number
$p \in (0,1)$,  and prove the short time existence of a solution to the \eqref{eqn-qk} flow which is
of class $C^{m,\gamma}$ (with $m, \gamma$ depending on $p$) so that the pressure function
$g$ is still smooth up to the interface and the interface moves by the $Q_{k-1}$ flow. The fact that the solution $\Sigma_t$ remains in the class
$C^{m,\gamma}$, for $t >0$, distinguishes this flow from other,
previously studied, degenerate free-boundary problems (such as the
Gauss curvature flow with flat sides, the porous medium equation and
the evolution p-laplacian equation) in which the regularity of the solution 
for $t >0$ does not depend on the regularity of the initial data.

\end{rem}

\medskip

The paper is organized as follows:  in section
\ref{sect-convex} we will present  some apriori estimates for strictly
convex surfaces, most of which  have been shown   by Andrews in \cite{An1}. In
section \ref{sect-existence} we will  prove   the existence of a solution to \eqref{eqn-qk}, as stated in 
Theorem \ref{thm-main1}. The uniqueness of solutions will be shown in section 
\ref{sect-uniqueness}. 
Section \ref{sect-flat} will be devoted to the proof of Theorem \ref{thm-main2}.

\section{Part I: The existence and uniqueness of a $C^{1,1}$ solution}

\subsection{Notation and evolution equations}

Since the right hand side of \eqref{eqn-qk} can be viewed as a
function of the second fundamental form matrix $A$, a direct
computation shows that its linearization is given by
$$\mathcal{L}(u) = \frac{\partial Q_k}{\partial h_l^i}\, 
\nabla_i \, \nabla_l u = a^{il}\, \nabla_i \, \nabla_l u$$
with 
\begin{equation}
\label{eq-coeff}
a^{il} =  \frac{\partial Q_k}{\partial h_l^i}.
\end{equation}   
Notice that if we compute $a^{il}$ in geodesic coordinates around
the point (at which the matrix $A$ is diagonal) we get
$$a^{ii} = \frac{\partial Q_k}{\partial\lambda_i}$$
and other elements being zero.

Also,  we have the evolution equations of the induced metric $g_{ij}$,
\begin{equation}\label{eqn-gij}
\frac{\partial g_{ij}}{\partial t}  = -2Q_k \, h_{ij}
\end{equation}
of the mean curvature $H$,
\begin{equation}\label{eqn-H}
\frac{\partial H}{\partial t}= \mathcal{L} H + \frac{\partial^2Q_k}{\partial h_q^p\partial h_m^l}\nabla^ih_q^p\nabla_ih_m^l + \frac{\partial Q_k}{\partial h_m^l}h_p^lh_m^p\,  H
\end{equation}
and of the speed $Q_k$,
\begin{equation}\label{eqn-eqk}
\frac{\partial Q_k}{\partial t} = \mathcal{L} Q_k + \frac{\partial}{\partial h_j^i}Q_kh^{il}h_{lj}Q_k.
\end{equation}

\subsection{Apriori estimates for strictly convex hypersurfaces}
\label{sect-convex}

The main result in this section relies on the work  of Andrews in \cite{An1}.
Before we state it, lets prove the following lemma.

\begin{lem}
\label{lem-lower}
Let $\Sigma$ be  a convex hypersurface (not necessarily strictly convex) such that
it contains a ball $B_{\rho}(0)$, centered at the origin, of radius $\rho$. Then there
exists a constant $\delta = \delta(\rho) > 0$, 
depending  only on $\rho$ and the diameter of $\Sigma$, such that   that for every $q\in \Sigma$, we have 
$$\langle q, \nu\rangle \ge \delta$$
where $\nu$ denotes the outer unit normal to $\Sigma$ at $q$. 
\end{lem}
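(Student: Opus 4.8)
The plan is to use a standard support-function / geometric argument. Since $\Sigma$ is convex and contains the ball $B_\rho(0)$, the origin is an interior point of the convex body $K$ bounded by $\Sigma$. For a point $q\in\Sigma$ with outer unit normal $\nu$, the supporting hyperplane at $q$ is $\{x : \langle x-q,\nu\rangle = 0\}$, and convexity together with $B_\rho(0)\subset K$ forces the whole ball to lie on the inner side of this hyperplane, i.e. $\langle x - q,\nu\rangle \le 0$ for all $x\in B_\rho(0)$. Taking $x = -\rho\,\nu \in B_\rho(0)$ gives $\langle -\rho\nu - q,\nu\rangle \le 0$, that is $\langle q,\nu\rangle \ge \rho$. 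So in fact the cleanest route shows $\langle q,\nu\rangle \ge \rho$ directly, and one may take $\delta = \rho$.

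First I would set up the notation: let $K$ be the compact convex body with $\partial K = \Sigma$, fix $q\in\Sigma$ and an outer unit normal $\nu$ at $q$ (it exists since $\Sigma$ is convex, though possibly non-unique if $\Sigma$ is only $C^{1,1}$ rather than $C^1$ — any choice works). Second, I would invoke the supporting hyperplane property: for all $y\in K$, $\langle y-q,\nu\rangle\le 0$. Third, since $B_\rho(0)\subset K$, apply this with $y$ ranging over the ball; choosing $y=-\rho\nu$ (or equivalently taking the infimum over the ball of $\langle y,\nu\rangle$, which equals $-\rho$) yields $-\rho - \langle q,\nu\rangle\le 0$, hence $\langle q,\nu\rangle\ge\rho$. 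Setting $\delta=\rho$ finishes the proof, and one notes the diameter of $\Sigma$ does not even enter (the statement allows $\delta$ to depend on both, so this is fine, though one could also just say $\delta = \rho$).

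There is essentially no main obstacle here; the only point requiring a word of care is the existence of a supporting hyperplane at every boundary point of a convex body and the fact that its inner normal direction can be taken to be the (or an) outer unit normal $\nu$ appearing in the flow — this is classical convex geometry and holds for any convex body regardless of smoothness. If one prefers to stay in the smooth/$C^{1,1}$ category, at a point where $\Sigma$ is differentiable the supporting hyperplane is simply the tangent hyperplane and $\nu$ is its unique outer normal, so the inequality $\langle y-q,\nu\rangle\le 0$ for $y\in K$ is immediate from convexity of $K$ about the tangent plane at $q$. I would phrase the write-up using this elementary observation and avoid any appeal to the diameter, remarking only that the statement as given permits the weaker dependence.
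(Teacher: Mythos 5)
Your approach --- a one-line supporting-hyperplane argument --- is sound and genuinely different from the paper's proof, and when carried out correctly it is actually stronger: it yields the sharp constant $\delta = \rho$, depending on $\rho$ alone. The paper instead argues by contradiction, assuming $\langle q,\nu\rangle$ is small and deriving a geometric contradiction from a triangle with vertices at the origin, $q$, and a second intersection point $\tilde q$; the diameter of $\Sigma$ enters essentially in that estimate and the resulting constant is not explicit. Your route is the standard one in convex geometry and sidesteps all of that.

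However, there is a direction/sign error that you need to fix. You take the test point $y = -\rho\nu$ and write that $\langle -\rho\nu - q,\nu\rangle \le 0$ gives $\langle q,\nu\rangle \ge \rho$; but $\langle -\rho\nu - q,\nu\rangle \le 0$ unwinds to $-\rho - \langle q,\nu\rangle \le 0$, i.e. $\langle q,\nu\rangle \ge -\rho$, which is vacuous. You also say to take the \emph{infimum} of $\langle y,\nu\rangle$ over the ball; you want the \emph{supremum}. The constraint from the supporting hyperplane is $\langle y,\nu\rangle \le \langle q,\nu\rangle$ for all $y\in K$, so the useful choice is the point of $\overline{B_\rho(0)}$ farthest in the direction $\nu$, namely $y=\rho\nu$, which gives $\rho - \langle q,\nu\rangle \le 0$, hence $\langle q,\nu\rangle \ge \rho$. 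With that correction the proof is complete; the rest of your write-up, including the remark that the diameter does not actually enter, is right.
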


\begin{proof}
Since our hypersurface is convex and the origin is enclosed by it, we have that
$\langle q,\nu\rangle \ge 0$ for every $q\in \Sigma$.
Assume that the conclusion of the Lemma is not true. 
Then we can always find a point $q\in \Sigma$ for which
$\langle q,\nu\rangle$ can be made arbitrarily small.
Observe that if $\alpha$ is an angle between $q$ (considered as a position vector) and $\nu$, 
$$\langle q,\nu\rangle = |q|\, \cos \alpha \ge \rho\, \cos\alpha$$
from where it follows that $\alpha$ can be made as close to $\frac{\pi}{2}$ as
we want by our choice of $q$. Let $\tilde{q}$ be an  intersection point of $\Sigma$ with the  line that 
contains the origin and has the normal vector at the point $q$ as its direction. 
Since this line intersect  $\Sigma$ at two points,  we take $\tilde q$ to 
be the closer intersection point to $q$. Consider the  triangle with
vertices being the origin, $q$ and $\tilde{q}$. Since  $\alpha \approx  \frac{\pi}{2}$, 
since   the angle between the normal vector $\nu$ and the tangent vector $\tau$ at the point $q$ is 
$\frac{\pi}{2}$ and because  our surface lies completely on one side of $\tau$,  it follows 
that the angle of this triangle at the point $q$ (denote it by $\angle q$) is close to zero.
The angle at the origin $\angle 0 \approx \frac{\pi}{2}$ and therefore
$$|\tilde{q}| \approx  |q\tilde{q}| \, \cos\angle\tilde{q}$$
which implies
$$\cos \angle\tilde{q} \approx  \frac{|\tilde{q}|}{|q\tilde{q}|} \ge \frac{\rho}{C}$$
since the diameter $\diam(\Sigma_0) \le C$. Hence 
$$\angle\tilde{q} \le \frac{\pi}{2} - \eta$$
for a uniform constant $\eta$, depending only on $\rho$ and the diameter
of $\Sigma$. This shows that  $\angle q \ge \frac{\eta}{2}$ which
contradicts our choice of $q$.
\end{proof}

\begin{thm}
\label{thm-strict-conv}
Let $\Sigma_0$ be a strictly convex hypersurface in
$\mathbb{R}^{n+1}$ and let $\Sigma_t$ be a family  of hypersurfaces
evolving by (\ref{eqn-qk}). Let $\tau > 0$ be such that 
a ball $B_{\rho}$, of radius $\rho$ is contained in $\Sigma_{\tau}$. Then,  there
exists a positive constant $C  = C(\rho, \|\Sigma_0\|_{C^{1,1}})$ so that
$$|A(t)| \le C, \qquad  \forall \,\, 0 \le t \le \tau,$$
where $A(t)$ is the second fundamental form of $\Sigma_t$.
\end{thm}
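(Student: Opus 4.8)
The goal is an upper bound on the second fundamental form $A(t)$ of $\Sigma_t$ on $[0,\tau]$, uniform in terms of $\rho$ and $\|\Sigma_0\|_{C^{1,1}}$. The natural strategy is a maximum-principle argument on a suitable quantity built from $A$ and the support function, following Andrews \cite{An1}. First I would recall (or re-derive from \eqref{eqn-gij}--\eqref{eqn-eqk}) the evolution equation for the second fundamental form $h_i^j$, which has the form $\partial_t h_i^j = \mathcal L h_i^j + (\text{quadratic terms in } h\text{ with coefficients involving } \partial Q_k/\partial h, \partial^2 Q_k/\partial h^2)$. Because $Q_k$ is homogeneous of degree one in the principal curvatures and is concave on the positive cone (a standard fact for quotients of elementary symmetric functions, used by Andrews and Dieter), the second-derivative terms have a favorable sign, and the reaction term is controlled by $|A|$ times the speed $Q_k$. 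The point where strict convexity is only used through $\Sigma_\tau \supset B_\rho$, not through a positive lower bound on the speed, is exactly what forces us to work on the whole interval $[0,\tau]$ at once.

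The key auxiliary function is the one Andrews uses to bound curvature from above on a convex hypersurface containing a ball: consider $W = \dfrac{\langle A\, e,e\rangle}{\langle X,\nu\rangle - \delta/2}$ (or equivalently the largest principal curvature divided by $\langle X,\nu\rangle - \delta/2$), where $X$ is the position vector and $\delta = \delta(\rho)>0$ is the constant from Lemma \ref{lem-lower}, which guarantees $\langle X,\nu\rangle \ge \delta$ on each $\Sigma_t$, $0\le t\le\tau$ — here one must note that $\Sigma_t$ contains $B_\rho$ for all $t\le\tau$ as well, since by the avoidance/inclusion property of the flow (Definition \ref{def-visc}, or comparison with shrinking spheres) the regions $\Omega_t$ decrease and $B_\rho\subset\Omega_\tau\subset\Omega_t$; the diameter stays bounded by $\diam(\Sigma_0)$ since the flow shrinks the convex body. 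I would then compute $\partial_t W$ using the evolution equations for $A$ and for the support function $\langle X,\nu\rangle$, namely $\partial_t\langle X,\nu\rangle = \mathcal L\langle X,\nu\rangle + (\text{lower order})$; at an interior spatial maximum of $W$ the gradient terms are handled in the usual way (the quotient structure produces a good term $-2\,(\text{grad } \log(\langle X,\nu\rangle-\delta/2))\cdot(\text{grad }\langle A e,e\rangle)/(\cdots)$ which one absorbs), and one is left with an ODE differential inequality $\tfrac{d}{dt}W_{\max} \le C_1 W_{\max}^2 - C_2 W_{\max}^{?}$ or, better, thanks to the concavity of $Q_k$ and the denominator staying bounded below by $\delta/2$, a clean inequality of the form $\tfrac{d}{dt}W_{\max}\le C\, W_{\max}$, giving $W_{\max}(t)\le e^{Ct}W_{\max}(0)$. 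Since $W_{\max}(0)$ is controlled by $\|\Sigma_0\|_{C^{1,1}}$ (which bounds $|A(0)|$) and by the lower bound $\delta/2$ on the denominator, and since $|A(t)|\le C(n)\,W_{\max}(t)\,\big(\langle X,\nu\rangle_{\max}\big)\le C(n)\,W_{\max}(t)\cdot\diam(\Sigma_0)$ by convexity (all principal curvatures are nonnegative, so $|A|$ is comparable to the largest one), we obtain the asserted bound on $[0,\tau]$.

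The main obstacle, and the place that needs care, is the derivation of the differential inequality for $W$: one must verify that the full reaction term — combining the quadratic curvature terms from $\partial_t h$, the terms from differentiating the denominator twice, and the cross terms — has the right sign or is dominated by $C\,W$ after using (i) the concavity of $Q_k$, (ii) the homogeneity of degree one (so $\sum_i (\partial Q_k/\partial\lambda_i)\lambda_i = Q_k$ and $Q_k \le C(n)|A|$), and (iii) the lower bound $\langle X,\nu\rangle\ge\delta>0$ which keeps the denominator away from zero uniformly in $t$. A subtlety is that, a priori, $\Sigma_t$ need only be $C^{1,1}$ (Theorem \ref{thm-main1}), so strictly speaking the computation should first be carried out for the smooth approximating strictly convex solutions used to construct the viscosity solution, where short-time smoothness of strictly convex solutions is guaranteed by Andrews \cite{An1}/Dieter \cite{D}, and then the bound passes to the limit; I would phrase the estimate so that it is manifestly a priori, i.e. valid on any subinterval where the solution is smooth and strictly convex, with a constant independent of that subinterval.
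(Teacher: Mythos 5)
Your overall plan is genuinely different from the paper's and its skeleton is sound, but the reasoning you give at the crucial step is wrong, and taken literally the differential inequality you write down does not follow from the facts you cite.

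For comparison, the paper works in two steps, both keyed to the speed $Q_k$ rather than to $\lambda_{\max}$. First it considers $\mathcal{F}=\langle F,\nu\rangle+2tQ_k$, shows $\mathcal{F}_{\min}$ is non-decreasing via \eqref{eq-mon} and \eqref{eq-positive}, and then uses the concavity of $Q_k$ to show $\sup H/\mathcal{F}$ is non-increasing, yielding $H\leq C_1+C_2 Q_k$. Second it bounds $Q_k$ by evolving $Q_k/(\langle F,\nu\rangle-\delta)$; the reaction term there is $\tfrac{Q_k}{(\langle F,\nu\rangle-\delta)^2}\bigl(2Q_k-\delta\,\tfrac{\partial Q_k}{\partial h_j^i}h^{il}h_{lj}\bigr)$, and the input that makes it tractable is Dieter's inequality \eqref{eq-positive}, $\sum_i\tfrac{\partial Q_k}{\partial\lambda_i}\lambda_i^2\geq\tfrac{k}{n-k+1}Q_k^2$, which turns the bracket into a concave quadratic in $Q_k$ and hence into a quantity bounded above by a constant depending only on $\delta,n,k$.

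Your one-step quantity $W=\lambda_{\max}/(\langle X,\nu\rangle-\delta/2)$ can indeed be made to work and is more direct: because $Q_k$ is degree-one homogeneous the reaction in $\partial_t h_i^j$ is $\tfrac{\partial Q_k}{\partial h_l^m}h_{mp}h^p_l\,h_i^j$, and the standard quotient computation collapses the reaction of $W$ to $\tfrac{u}{v^2}\bigl(2Q_k-\tfrac{\delta}{2}\tfrac{\partial Q_k}{\partial h_l^m}h_{mp}h^p_l\bigr)$ with $u=\lambda_{\max}$, $v=\langle X,\nu\rangle-\delta/2$ --- the same shape as the paper's second step. Here is the gap: this expression is a priori of order $W\cdot Q_k\sim W^2$. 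The concavity of $Q_k$ only supplies the favorable sign of the $\partial^2 Q_k$ term (so it can be discarded at a maximum), and the lower bound $v\geq\delta/2$ does nothing to the factor $Q_k$; neither of these, contrary to your claim, delivers $\tfrac{d}{dt}W_{\max}\leq CW_{\max}$ on its own. What is missing is precisely Dieter's inequality \eqref{eq-positive}, which gives $2Q_k-\tfrac{\delta}{2}\tfrac{\partial Q_k}{\partial h_l^m}h_{mp}h^p_l\leq 2Q_k-\tfrac{\delta k}{2(n-k+1)}Q_k^2\leq\tfrac{2(n-k+1)}{\delta k}$, after which the bound $\tfrac{d}{dt}W_{\max}\leq C(\delta,n,k)\,W_{\max}$ follows. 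You should state this inequality explicitly rather than attributing the cancellation to concavity. With that ingredient in place your route is correct and somewhat shorter than the paper's two-step argument, at the mild cost of the standard technicalities in applying the maximum principle to the Lipschitz function $\lambda_{\max}$, which the paper avoids by working with the smooth scalars $H$ and $Q_k$.
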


\begin{proof}
Assume with no loss of generality that the origin is enclosed by $\Sigma_0$ and 
$$B_\rho(0)\,\,\,  \mbox{is contained in } \,\, \Sigma_{\tau}.$$
As in \cite{DS} we consider the quantity
$$\mathcal{F} = \langle F,\nu\rangle + 2\, t\, Q_k.$$
Since the speed $Q_k$ is a homogeneous function of the principal curvatures of degree one, a similar computation as in \cite{DS} yields to 
\begin{equation}
\label{eq-dot-pr}
\frac{\partial}{\partial t}\langle F,\nu\rangle = \mathcal{L}\langle F,\nu\rangle - 2Q_k + \frac{\partial Q_k}{\partial h_j^i}h^{il}h_{lj}\, \langle F,\nu\rangle.
\end{equation}
Hence
\begin{equation}
\label{eq-mon}
\frac{\partial}{\partial t}\mathcal{F} = \mathcal{L}\mathcal{F} +  \frac{\partial Q_k}{\partial h_j^i}h^{il}h_{lj}\, \mathcal{F}.
\end{equation}
As in \cite{D} we have
\begin{equation}
\label{eq-positive}
\sum_{i=1}^n \frac{\partial Q_k(\lambda)}{\partial \lambda_i}\lambda_i^2 \ge \frac{k}{n-k+1}\, Q_k^2(\lambda) \ge 0.
\end{equation}
This together with the maximum principle applied to (\ref{eq-mon}) imply
$$\mathcal{F}_{\min}(t) \nearrow \,\,\, \mbox{in} \,\,\, t,$$
and therefore by Lemma \ref{lem-lower},
$$\mathcal{F}_{\min}(t) \ge \mathcal{F}_{\min}(0) = \langle F,\nu\rangle_{\min}(0) \ge \delta > 0, \qquad  \mbox{for all} \,\,\, t \ge 0.$$
The speed $Q_k$ is concave and from the evolution equation for $\frac{H}{\mathcal{F}}$ one can easily deduce,
$$\frac{\partial}{\partial t} ( \frac{H}{\mathcal{F}}   )  \le \mathcal{L}(\frac{H}{\mathcal{F}}) +
\frac{2}{\mathcal{F}}\, \frac{\partial Q_k}{\partial h_j^i}\,  \nabla_j\mathcal{F}\, \nabla^i (\frac{H}{\mathcal{F}} ).$$
By the maximum principle applied to the previous inequality we get
$$\sup_{\Sigma_t}\frac{H}{\mathcal{F}} \searrow  \,\,\, \mbox{in} \,\,\, t.$$
Since $|\langle F,\nu\rangle| \le |F| \le C$, uniformly in $t$, there are uniform constants $C_1, C_2$ so that
\begin{equation}
\label{eq-H}
H(\cdot,t) \le C_1 + C_2\, Q_k.
\end{equation}
Also, because  our surface is convex, the second fundamental form is controlled by the mean curvature $H$. In addition, since  we assume that the  ball $B_{\rho} \subset \Sigma_{\tau}$, by Lemma \ref{lem-lower} there exists $\delta = \delta(\rho)$ so that 
$$\langle F, \nu\rangle \ge 2\delta, \qquad \mbox{for all} \,\,\, t\in [0,\tau].$$
As in \cite{An1} we consider the quantity $$\frac{Q_k}{\langle F,\nu\rangle - \delta}.$$
The evolution equation for $Q_k$, (\ref{eq-dot-pr}) and (\ref{eq-positive}) yield to:
\begin{eqnarray}
\label{eq-long}
\frac{\partial}{\partial t} \left (\frac{Q_k}{\langle F,\nu\rangle - \delta} \right ) &=& \mathcal{L}
\left (\frac{Q_k}{\langle F,\nu\rangle - \delta} \right ) + \frac{2}{\langle F,\nu\rangle - \delta}\, \frac{\partial Q_k}{\partial h_j^i}\, \nabla^i \,  \left (\frac{Q_k}{\langle F,\nu\rangle-\delta} \right )\, \nabla_j\langle F,\nu\rangle \nonumber \\
&+& \frac{Q_k}{(\langle F,\nu\rangle-\delta)^2} \left (2Q_k - \delta \, \frac{\partial Q_k}{\partial h_j^i}h^{il}h_{lj} \right ) \nonumber \\
&\le&  \mathcal{L} \left (\frac{Q_k}{\langle F,\nu\rangle - \delta} \right ) + \frac{2}{\langle F,\nu\rangle - \delta}\, \frac{\partial Q_k}{\partial h_j^i}\, \nabla^i \left (\frac{Q_k}{\langle F,\nu\rangle-\delta} \right )\, \nabla_j\langle F,\nu\rangle \nonumber \\
&+& \frac{Q_k^2}{(\langle F,\nu\rangle - \delta)^2}\, \left (2 - \delta\, \frac{k}{n-k+1}Q_k
\right ).
\end{eqnarray}
If $2 - \delta\, \frac{k}{n-k+1}\, Q_k \ge 0$, this implies that $Q_k \le \frac{2(n-k+1)}{k\, \delta}$ and because of   (\ref{eq-H}) we are done. If $2 - \delta\, \frac{k}{n-k+1}\, Q_k < 0$, the maximum principle applied to (\ref{eq-long}) yields to 
$$\frac{d}{dt} \left (\frac{Q_k}{\langle F,\nu\rangle-\delta} \right )_{\max} \searrow  \,\,\, \mbox{in} \,\,\, t$$
which, in particular  implies that 
$$Q_k(t) \le C$$
for a   constant $C$ that depends on $\delta = \delta(\rho)$ and that  $C^{1,1}$ norm of $\Sigma_0$.
\end{proof}

\subsection{The existence of a viscosity solution to (\ref{eqn-qk})}\label{sect-existence}

We will next show the following proposition which constitutes the existence 
part of Theorem \ref{thm-main1}. 

\begin{prop}
\label{prop-existence}
Under the initial assumptions of Theorem \ref{thm-main1},    there exists a $C^{1,1}$ solution
$\Sigma_t$  to the
\eqref{eqn-qk} flow up to some finite time $T$ at which the enclosed volume shrinks to
zero. 
\end{prop}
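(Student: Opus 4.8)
The plan is to construct the $C^{1,1}$ solution $\Sigma_t$ as a limit of smooth solutions obtained by approximating the initial data. First I would take a sequence of smooth, strictly convex hypersurfaces $\Sigma_0^\eps$ converging to $\Sigma_0$ in $C^{1,1}$ (for instance, obtained by mollifying the support function of $\Sigma_0$, or by evolving $\Sigma_0$ slightly by a smoothing flow and using the openness of strict convexity). By Andrews' result in \cite{An1}, for each $\eps$ there is a smooth solution $\Sigma_t^\eps$ of \eqref{eqn-qk} starting from $\Sigma_0^\eps$, existing on a maximal interval $[0,T_\eps)$ and shrinking to a point as $t\to T_\eps$. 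The goal is then to pass to the limit $\eps\to 0$ and check that the limit is a viscosity solution in the sense of Definition \ref{def-visc} of class $C^{1,1}$, existing until the enclosed volume vanishes.

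The key estimates come from Theorem \ref{thm-strict-conv}. Fix any $\tau$ strictly less than the would-be extinction time $T$ of $\Sigma_0$, characterized by the vanishing of the enclosed volume. One first checks, by comparison with shrinking spheres and the avoidance principle for the flow, that the enclosed volumes of $\Sigma_t^\eps$ converge to that of the putative limit and, crucially, that there is a uniformly positive inradius: a fixed ball $B_\rho(0)$ (after a translation) remains enclosed by $\Sigma_t^\eps$ for all $t\le\tau$ and all small $\eps$. Applying Theorem \ref{thm-strict-conv} to $\Sigma_t^\eps$ on $[0,\tau]$ then yields a bound $|A^\eps(t)|\le C(\rho,\|\Sigma_0\|_{C^{1,1}})$ uniform in $\eps$ and in $t\in[0,\tau]$; together with convexity and the uniform diameter bound this gives uniform $C^{1,1}$ bounds on $\Sigma_t^\eps$ in space, and, since the speed $Q_k\le C$ is then controlled, a uniform Lipschitz bound in time. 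By Arzel\`a--Ascoli (applied to the support functions $s^\eps(\cdot,t)$ of $\Sigma_t^\eps$, which are uniformly $C^{1,1}$ in space and Lipschitz in time) we extract a subsequential limit $\Sigma_t$ which is convex, $C^{1,1}$, and defined for $t\in[0,T)$.

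It then remains to verify that the limit $\Sigma_t$ is a viscosity solution in the sense of Definition \ref{def-visc}. This is where the comparison principle for smooth solutions of \eqref{eqn-qk} does the work: if $N_0$ is a smooth strictly convex hypersurface enclosed by $\Omega_{t_0}$, then since $\Sigma_{t_0}^\eps\to\Sigma_{t_0}$ we have $N_0$ enclosed by $\Sigma_{t_0}^\eps$ for $\eps$ small, and Andrews' comparison principle for smooth solutions forces the smooth evolution $N_t$ of $N_0$ to remain enclosed by $\Sigma_{t_0+t}^\eps$ for as long as $N_t$ exists; letting $\eps\to 0$ gives that $N_t$ is enclosed by $\Omega_{t_0+t}$. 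The enclosing case is symmetric. Finally one identifies $T$ as the extinction time: the uniform inradius argument shows the solution extends past any $\tau$ with positive enclosed volume, while on the other hand the monotonicity $\mathcal F_{\min}(t)\nearrow$ and the estimate $\frac{\partial}{\partial t}\vol\le -c<0$ near extinction (compare again with spheres) show the volume must reach zero in finite time.

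The main obstacle, I expect, is the uniform lower bound on the inradius along the flow for the approximating sequence --- i.e. producing the fixed ball $B_\rho(0)$ enclosed by all $\Sigma_t^\eps$ up to time $\tau$ --- since Theorem \ref{thm-strict-conv} needs this as an input and it is precisely the place where the degeneracy (possible vanishing of $S_{k-1}^n$, hence of the speed) could a priori cause the surface to collapse non-spherically. This should follow from an inner barrier argument: the enclosed region cannot collapse faster than under the $Q_k$ flow of a suitable small sphere, together with a John-type lemma bounding the inradius below in terms of volume and diameter for convex bodies, but making the constants uniform in $\eps$ is the delicate point.
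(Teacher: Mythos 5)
Your proposal follows the same overall strategy as the paper: approximate $\Sigma_0$ by smooth strictly convex hypersurfaces $\Sigma^\e_0$, run the smooth flow to get $\Sigma^\e_t$, invoke the curvature estimate of Theorem~\ref{thm-strict-conv} on $[0,\tau]$ for any $\tau$ below the limiting extinction time, pass to a $C^{1,1}$ limit via Arzel\`a--Ascoli, and then verify the two inclusion properties of Definition~\ref{def-visc} via the comparison principle for the smooth approximants, together with a final inner-sphere argument identifying $T$ with the vanishing of the enclosed volume.

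The one place where you diverge from the paper, and which you yourself flag as the ``delicate point,'' is the uniform inradius lower bound needed as input to Theorem~\ref{thm-strict-conv}. You propose an inner barrier argument combined with a John-type estimate, and worry about making the constants $\e$-uniform. The paper sidesteps this entirely by choosing the approximating family to be nested inner approximations: $\Sigma^\e_0 \subset \Omega_0$, monotone in $\e$, with $\Sigma^\e_0 \nearrow \Sigma_0$ as $\e \to 0$. By the comparison principle the corresponding solutions remain nested, $\Sigma^{\e'}_t \subset \Sigma^\e_t$ for $\e < \e'$, and the extinction times $T_\e$ are monotone with limit $T$. Then for a fixed $\tau < T$ one simply picks a single $\e_0$ with $T_{\e_0} > \tau$; since $\Sigma^{\e_0}_\tau$ is a nondegenerate strictly convex surface it encloses some $B_\rho$, and nestedness gives $B_\rho \subset \Sigma^{\e_0}_\tau \subset \Sigma^\e_\tau$ for all $\e \le \e_0$ at once. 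No John-type bound and no sphere-collapse analysis is needed. Monotonicity also yields convergence of the full family (not just a subsequence), which makes the limit well defined without extra argument. If you fix your approximation to be nested from the inside rather than a general $C^{1,1}$ approximation, your proof closes cleanly and coincides with the paper's.
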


\begin{proof}
We begin by approximating  our initial  convex hypersurface $\Sigma_0$  by a
family of smooth strictly convex surfaces $\Sigma^{\e}_0$ (for example by the
mean curvature flow). We may assume that all $\Sigma^{\e}_0$ are contained
in $\Omega$, approaching $\Sigma_0 = \partial\Omega_0$ in the Hausdorff
distance as $\e \to 0$ and that $\Sigma^{\e}_0$ increase in $\e$. Let $\Sigma^{\e}_t$,  $t\in [0,T_{\e})$,
be the unique strictly solution of \eqref{eqn-qk} flow with initial data  $\Sigma^{\e}_0$. 
Then, $\Sigma_t^\e$ increases in $\e$. In addition,     $T_\e$ denotes the time at which the surface $\Sigma^\e_t$ shrinks to a point, then  
$T_{\e}$ increases in $\e$. Moreover, an easy application of the
comparison principle shows that  $ T_{\e} \le T_R$, where
$T_R$ is  the extinction times of a   sphere $S_R$ 
 which can be  placed  outside of $\Sigma_0$.
Hence, the limit 
$$T:= \lim_{\e\to 0} T_{\e}$$
exists.  Let $\tau <
T$. Then there exist $\rho > 0$ and $\e_0$ so that $B_{\rho} \subset
\Sigma^\e(\tau)$, for all $\e \le \e_0$. Theorem
\ref{thm-strict-conv} implies the uniform bound 
$$\|\Sigma^{\e}_t\|_{C^{1,1}} \le C(\tau,\|\Sigma\|_{C^{1,1}}), \qquad 
\mbox{for all}\,\,\,  t\in [0,\tau].$$ Since each $\Sigma^{\e}$ is smooth, the previous estimate 
shows that
$$\|\Sigma^{\e}_t\|_{C^2} \le C(\tau,\|\Sigma\|_{C^{1,1}}), \,\,\, t\in [0,\tau].$$
Also, since  $\Sigma^{\e}_t$ is increasing in $\e$, the Arzela-Ascoli theorem
and the previous estimate imply there is a $C^{1,1}$ limit 
$$\Sigma_t
:= \lim_{\e\to 0} \Sigma^{\e}_t$$ and that the convergence is in the $C^{1,1}$
norm.

We claim that $\Sigma_t$ is a viscosity solution to (\ref{eqn-qk}) in
the sense of Definition \ref{def-visc}.  To see that, let $\Sigma'_0 \subset  \Omega$ where $\partial\Omega = \Sigma_0$.  Then  
$\Sigma'_0$ is enclosed by $\Sigma^{\e}_0$,  for $\e$ sufficiently small. By
the comparison principle $\Sigma'_t$ is enclosed by $\Sigma^{\e}_t$, 
for $t > 0$. Since $\Sigma^{\e}_t$ increases in $\e$ and converges to
$\Sigma_t$, we have that $\Sigma'_t$ is enclosed by $\Sigma_t$,  for
$t > 0$. The second condition in Definition \ref{def-visc} can be
checked similarly.

We finally observe that the enclosed volume of $\Sigma_t$ shrinks to zero, as
$t \to T$. Indeed, assume otherwise. Then, there exists a sphere  $S_{2\rho}(P)$,
for some $\rho >0$,  which is enclosed by $\Sigma_t$,  for all $t <  T$. 
Fix a $t_0< T$ to be chosen momentarily.  Since $\Sigma_\e^ {t_0} \nearrow \Sigma_{t_0}$, as $\e \to 0$, this means
that there exists an $\e=\e(t_0)$ such that $S_{\rho }(P)$ is enclosed  inside
the surface $\Sigma_\e^ {t_0}$. The comparison principle then shows that the
vanishing time $T_\e$ of $\Sigma_t^\e$ satisfies 
$$T_{\e} \geq t_0 + T_\rho$$
where $T_\rho$ is the time at which the 
sphere $S_{\rho}(P)$ evolving by our \eqref{eqn-qk} flow shrinks to a point. On the other hand, since 
$$T \geq T_{\e} \geq t_0 + T_\rho$$
we will reach a contradiction provided that $t_0$ is chosen sufficiently close to $T$,
depending only on $\rho$. This completes the proof of our proposition.

\end{proof}

\begin{prop}
\label{prop-point}
Under the initial assumptions of Theorem \ref{thm-main1},    there exists a $C^{1,1}$ solution
$\Sigma_t$  to the
\eqref{eqn-qk} flow up to some finite time $T$ at which the surface shrinks to a point.
\end{prop}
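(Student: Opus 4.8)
The plan is to upgrade Proposition \ref{prop-existence}, which already gives a $C^{1,1}$ viscosity solution $\Sigma_t$ existing up to the time $T$ at which the enclosed volume vanishes, to the statement that the surface actually collapses to a single point at time $T$. The key geometric input is that a convex body whose volume tends to zero need not a priori shrink to a point --- it could degenerate to a lower-dimensional convex set of positive diameter --- so the work is to rule this out using the flow equation, via a comparison/barrier argument together with the lower bound on the speed that is implicit in the a priori estimates of Section \ref{sect-convex}.

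First I would argue by contradiction: suppose $\Sigma_t$ does not shrink to a point as $t \to T$. Since each $\Omega_t$ is convex and $\vol(\Omega_t) \to 0$ while $\diam(\Omega_t) \not\to 0$, the bodies $\Omega_t$ converge (in Hausdorff distance, after passing to a subsequence $t_j \to T$) to a compact convex set $K$ of dimension $\le n$ with $\diam(K) \ge 2r$ for some fixed $r>0$. The monotonicity of the smooth approximations $\Sigma^\e_t$ in $\e$, together with the comparison principle used in the proof of Proposition \ref{prop-existence}, lets me transfer the problem to the approximating flows: it suffices to produce, for $t_0$ close enough to $T$, a ball of a definite radius enclosed inside $\Sigma^\e_{t_0}$ for all small $\e$, and then run the argument at the very end of the proof of Proposition \ref{prop-existence} verbatim --- the extinction time $T_\e$ would then exceed $t_0 + T_\rho$, contradicting $T_\e \le T$. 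So the whole matter reduces to showing that if $\diam(\Omega_t)$ stays bounded below as $t \to T$, then in fact a fixed ball $B_\rho(P)$ stays inside $\Omega_t$ for all $t < T$.

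The main obstacle --- and the heart of the argument --- is exactly this last reduction: controlling the "thin direction" of the degenerating convex body. For this I would use the key fact, already recorded as \eqref{eq-positive}, that $\sum_i \frac{\partial Q_k}{\partial\lambda_i}\lambda_i^2 \ge \frac{k}{n-k+1} Q_k^2$, which forced $\mathcal F_{\min}(t)$ to be nondecreasing and hence gave the uniform lower bound $\langle F,\nu\rangle \ge \delta$ once a ball $B_\rho$ sits inside $\Sigma_\tau$. Combined with the concavity of $Q_k$ and the estimate $Q_k \le C$ from Theorem \ref{thm-strict-conv}, one gets that the inradius of $\Omega_t$ cannot decay faster than linearly in $(T-t)$; quantitatively, if $\Omega_{t_0}$ contains a ball of radius $\rho_0$ centered at $P$, then $\Omega_{t_0 + s}$ contains a concentric ball of radius $\rho_0 - C s$ for the explicit constant $C$ bounding the speed. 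Since we are assuming $\diam(\Omega_t) \ge 2r$ is bounded below, John's ellipsoid theorem for convex bodies gives that $\Omega_t$ contains \emph{some} ball of radius comparable to $\vol(\Omega_t)/r^{n-1}$ --- but a cleaner route is to observe directly that a uniform diameter lower bound along the convex flow, via the speed bound, prevents the volume from reaching zero before the diameter does. Thus either the diameter also goes to zero (and then, by convexity, $\Sigma_t$ shrinks to a point and we are done), or a fixed ball persists and we reach the contradiction above.

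To organize the write-up I would: (i) state and prove a short lemma that for the viscosity solution, $\Omega_t \supset B_{\rho_0 - Cs}(P)$ whenever $\Omega_{t_0}\supset B_{\rho_0}(P)$, with $C = C(\|\Sigma_0\|_{C^{1,1}})$ the speed bound, by comparing with shrinking spheres (legitimate by Definition \ref{def-visc} and Andrews' result \cite{An1} on spheres); (ii) deduce that the inradius $r(t)$ and the diameter $d(t)$ of $\Omega_t$ satisfy $r(t) \to 0$ iff $d(t)\to 0$, using convexity (for a convex body $r \ge c_n \vol / d^{n-1}$) together with $\vol(\Omega_t)\to 0$ forcing $r(t)\to 0$, hence $d(t)\to 0$; (iii) conclude from $d(t)\to 0$ and convexity that $\Omega_t$ converges to a single point $P_\infty$, which is then the extinction point. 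The only genuinely delicate point is (ii), where I must be careful that the lower-dimensional limit is excluded --- and that is handled precisely because $r(t)\to 0$ is forced by $\vol \to 0$, so no positive diameter can survive.
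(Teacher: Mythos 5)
Your argument has a genuine gap at step (ii), and it sits exactly where all the difficulty of the statement lies. You correctly observe that $\vol(\Omega_t)\to 0$ forces the inradius $r(t)\to 0$ (in fact $\omega_n\, r(t)^n \le \vol(\Omega_t)$, with no diameter input needed), but then you assert ``hence $d(t)\to 0$'' with no argument, and that implication is simply false for a general sequence of convex bodies: a thin pancake of fixed diameter has $r\to 0$, $\vol\to 0$, and $d$ bounded away from $0$. Ruling out collapse onto a lower-dimensional convex set \emph{is} the content of the proposition, and your sketch supplies no flow input that excludes it. The inequality $r\ge c_n\,\vol/d^{n-1}$ you invoke does not help: it is a lower bound on $r$ whose right-hand side tends to zero, hence vacuous in exactly the regime you are worried about. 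A secondary but real problem is the uniformity of the constant in your lemma (i): Theorem~\ref{thm-strict-conv} gives $|A|\le C(\rho,\|\Sigma_0\|_{C^{1,1}})$ with $C$ depending on the inradius $\rho$ of $\Sigma_\tau$, and this constant blows up as $\rho\to 0$, so the barrier $\Omega_{t_0+s}\supset B_{\rho_0-Cs}(P)$ cannot be run with a $t_0$-independent $C$ as $t_0\to T$.

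You should also be aware that the paper itself gives no proof of Proposition~\ref{prop-point}: it is stated immediately after Proposition~\ref{prop-existence} with no accompanying proof, and, more tellingly, at the end of Section~2.5 the authors explicitly list ``Does the surface $\Sigma_t$ always shrink to a point?'' as an \emph{open question}. The natural route using only the paper's machinery is the one suggested by the uniqueness argument: squeeze $\Sigma_t$ from the \emph{outside} by the strictly convex surfaces $\tilde\Sigma^{\e\delta}_t$, each of which shrinks to a point by Andrews \cite{An1}, with extinction times $(1+\delta)^2T^{\e(\delta)}\to T$ as $\delta\to 0$. Even this, however, requires converting closeness of extinction times into smallness of $\diam\bigl(\tilde\Omega^{\e\delta}_T\bigr)$, i.e.\ a quantitative rate at which a strictly convex solution shrinks near extinction, which the paper does not record. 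So neither your sketch nor the paper as written contains a complete proof of this proposition.
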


\subsection{The uniqueness of a viscosity solution to (\ref{eqn-qk})}\label{sect-uniqueness}

In Proposition  \ref{prop-existence} we have constructed a viscosity solution
to (\ref{eqn-qk}). The question that arises is whether that
solution is unique in the class of viscosity solutions defined by
Definition \ref{def-visc}. We will give a positive answer to this 
question in the following proposition.

\begin{prop}\label{prop-uniqueness}
If $F_1(\cdot,t), F_2(\cdot,t)$ are two viscosity solutions to
(\ref{eqn-qk}),  then $$F_1 \equiv F_2.$$
\end{prop}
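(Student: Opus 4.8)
\textbf{Proof proposal for Proposition \ref{prop-uniqueness}.}

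The plan is to reduce uniqueness to the comparison principle that is already encoded in Definition \ref{def-visc}, using the approximation scheme of Proposition \ref{prop-existence} to sandwich any viscosity solution between the solutions starting from slightly shrunk and slightly enlarged smooth strictly convex data. Concretely, let $\{\Omega^1_t\}$ and $\{\Omega^2_t\}$ be the convex regions bounded by $F_1$ and $F_2$. It suffices to show $\Omega^1_{t}=\Omega^2_{t}$ for all $t\in(0,T)$; equality of the boundaries then follows since these are $C^{1,1}$ hypersurfaces. By symmetry it is enough to prove $\Omega^1_{t}\subseteq\overline{\Omega^2_{t}}$ for all $t$. First I would fix a time $t_0\in(0,T)$ and, for small $\e>0$, choose a smooth strictly convex hypersurface $N^\e_0$ contained in $\Omega^1_{t_0}$ that approximates $\partial\Omega^1_{t_0}$ in Hausdorff distance from inside (as in the construction in Proposition \ref{prop-existence}, e.g. via a short-time mean curvature flow starting from $\partial \Omega^1_{t_0}$, followed by a slight inward dilation). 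Since $F_1$ is a viscosity solution, the smooth flow $N^\e_t$ out of $N^\e_0$ stays inside $\Omega^1_{t_0+t}$; since $F_2$ is a viscosity solution and $N^\e_0\subset \Omega^1_{t_0}\subseteq \overline{\Omega^2_{t_0}}$, after a further tiny inward perturbation of $N^\e_0$ we also get $N^\e_t$ enclosed by $\Omega^2_{t_0+t}$. Letting $\e\to 0$ gives $\Omega^1_{t_0+t}\subseteq\overline{\Omega^2_{t_0+t}}$ for all admissible $t$, and since $t_0>0$ was arbitrary we conclude $\Omega^1_s\subseteq\overline{\Omega^2_s}$ for all $s$ in a full neighborhood of $(0,T)$.

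The one genuine subtlety — and what I expect to be the main obstacle — is handling the initial time $t=0$: Definition \ref{def-visc} only compares a viscosity solution against smooth flows started at interior times $t_0>0$, so the argument above directly yields $\Omega^1_s=\Omega^2_s$ only on $(0,T)$, and one must separately argue that both regions attain the common initial datum $\Omega_0$ continuously, so that the solutions agree on all of $[0,T)$. For this I would use that a $C^{1,1}$ viscosity solution constructed as in Proposition \ref{prop-existence} depends continuously on the initial region in Hausdorff distance as $t\to 0^+$ (the barrier estimates of Theorem \ref{thm-strict-conv}, uniform up to $t=0$, give equicontinuity in time near $0$), hence $\Omega^i_t\to\Omega_0$; combined with agreement on $(0,T)$ this closes the gap. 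A second technical point is making sure the inward/outward perturbations $N^\e_0$ can be taken smooth and \emph{strictly} convex while still converging to $\partial\Omega^1_{t_0}$; this is exactly the approximation already used in Proposition \ref{prop-existence}, so it requires no new ideas. Finally, one should remark that since both $F_1,F_2$ are $C^{1,1}$ immersions of $M^n$ and the regions coincide, the immersions coincide up to reparametrization, which is the meaning of $F_1\equiv F_2$ in the statement; I would note this at the end rather than belabor it.
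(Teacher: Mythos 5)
Your outline is in the right family (sandwiching a viscosity solution between smooth strictly convex barriers and invoking Definition~\ref{def-visc}), but it contains a genuine gap, and the step that is missing is precisely the heart of the paper's argument.

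The crux is the sentence ``Letting $\e\to 0$ gives $\Omega^1_{t_0+t}\subseteq\overline{\Omega^2_{t_0+t}}$.'' From the definition you only get $N^\e_t\subset\Omega^1_{t_0+t}$ and $N^\e_t\subset\Omega^2_{t_0+t}$. Passing $\e\to 0$ gives $\lim_\e N^\e_t\subseteq\overline{\Omega^2_{t_0+t}}$, but nothing forces $\lim_\e N^\e_t$ to coincide with $\partial\Omega^1_{t_0+t}$: Definition~\ref{def-visc} says inner barriers \emph{stay inside}, not that they \emph{fill up} the region. A priori $\Omega^1_{t_0+t}$ could be strictly larger than the increasing limit of the $N^\e_t$, in which case the inclusion you want does not follow. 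To close this, you must simultaneously run an \emph{outer} barrier $M^\delta_0\supset\Omega^1_{t_0}$ with $M^\delta_t\supset\Omega^1_{t_0+t}$, and show that $M^\delta_t$ and $N^\e_t$ squeeze together as $\e,\delta\to 0$. That squeezing is not soft; it is the content of the paper's proof. There the outer barrier is chosen as the explicit dilation $\tilde\Sigma^{\e\delta}_0=(1+\delta)\Sigma^\e_0$ of the inner one, and the scale invariance of \eqref{eqn-qk} gives $\tilde F^{\e\delta}(\cdot,t)=(1+\delta)F^\e(\cdot,t/(1+\delta)^2)$, from which one derives the quantitative estimate $|\tilde F^{\e\delta}(t)-F^\e(t)|\le C(\tau)\delta$ via the speed bound of Theorem~\ref{thm-strict-conv}. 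That estimate is what lets you pinch \emph{any} viscosity solution, and in particular identify $F_1$ and $F_2$. Your proposal never produces such a two-sided quantitative bound, so the limit argument does not go through.

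There is also a circularity that compounds this: to compare $N^\e_0$ against $\Omega^2$, you invoke ``$N^\e_0\subset\Omega^1_{t_0}\subseteq\overline{\Omega^2_{t_0}}$,'' i.e.\ you assume at time $t_0$ the very inclusion you are trying to prove. This makes the argument a forward propagation from $t_0$, whose base case is $t_0=0^+$, the subtlety you flag at the end. Resolving that base case again requires the same barrier estimate (so that both solutions are trapped between inner and outer barriers started from the \emph{common} initial datum $\Sigma_0$, with the gap collapsing). Concretely: the paper does not propagate from an interior time at all; it sandwiches directly from $t=0$ with $\Sigma^\e_0\prec\Sigma_0\prec\tilde\Sigma^{\e\delta}_0$ and then closes the gap quantitatively, which sidesteps both your circularity and your filling-up problem in one stroke.
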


\begin{proof}
Let $\Sigma_0$ be our initial surface and $\Sigma^{\e}_0$ 
the strictly convex approximating surfaces   considered in Proposition  \ref{prop-existence},
which are all enclosed by $\Sigma_0$. We may assume, without loss of generality, that  
all $\Sigma^{\e}_0$ contain a  ball $B_{\rho}(0)$ centered at the origin. 
Let us  consider another strictly convex surface 
$\tilde{\Sigma}^{\e\delta}_0$ defined as a dilation of $\Sigma^{\e}_0$, that is,
if $\Sigma^{\e}_0$ is defined by an immersion $F^{\e}_0$, then
$$\tilde{F}^{\e\delta}_0= (1+\delta)\, F^{\e}_0$$
defines the surface $\tilde{\Sigma}^{\e\delta}_0$. 

\begin{claim}
For every $\delta > 0$ there exists an $\e = \e(\delta)$ so that 
the  surface $\tilde{\Sigma}^{\e\delta}_0$ defined as above is encloses the surface $\Sigma_0$.
\end{claim}

\begin{proof}
Observe that
$$|\tilde{F_0}^{\e\delta}| - |F_0| = |F^{\e}_0| - |F_0| + \delta|F^{\e}_0|$$
and that   $\|F^{\e}_0| - |F_0\| < \alpha(\e)$, where $\alpha(\e) \to 0$ as 
$\e \to 0$. Since all  $\Sigma^{\e}_0$ contain the ball  $B_{\rho}(0)$, we have 
$$|\tilde{F}^{\e\delta}_0| - |F_0| \ge -\alpha(\e) + \delta\, \rho > 0, \qquad \mbox{if} \,\, 
\alpha(\e) < \delta\, \rho.$$ Hence, by choosing   $\e := \e(\delta)$ 
sufficiently small so that $\alpha(\e) < \delta\, \rho$, we guarantee that  
$\tilde{\Sigma}^{\e\delta}_0$  encloses  $\Sigma_0$. 
\end{proof}

From now on  fix $\delta >0$, set $\e=\e(\delta)$ and consider
the corresponding surfaces $\Sigma^{\e}$ and $\tilde \Sigma^{\e\delta}_0$. Then, 
$\tilde \Sigma^{\e\delta}_0$ encloses $\Sigma_0$ and $\Sigma_0$ encloses $\Sigma^{\e}$. We write briefly that
$$\Sigma^{\e}_0  \prec \Sigma_0  \prec  \tilde \Sigma^{\e\delta}_0.$$
By Definition \ref{def-visc}, we will have
\begin{equation}\label{eqn-encl}
\Sigma^{\e}_t  \prec \Sigma_t  \prec  \tilde \Sigma^{\e\delta}_t, \qquad \forall t < T_\e
\end{equation}
where $T_\e$ denotes the extinction time of $\Sigma^{\e}_t $, since $\Sigma_t$ is a viscosity solution.

Denote by $\Sigma_t^\e, \tilde \Sigma^{\e\delta}_t$ the smooth  solutions  of \eqref{eqn-qk} starting at  $\Sigma^{\e}_0, \tilde \Sigma^{\e\delta}_0$ respectively
(note that both solutions are unique because their initial data are strictly convex). 
It is easy to see that $\tilde{\Sigma}^{\e\delta}_t$ is given by the immersion 
$$\tilde{F}^{\e\delta}(\cdot,t) = (1+\delta) \, F^{\e}(\cdot,\frac{t}{(1+\delta)^2}).$$
By above $\tilde{F}^{\e\delta}(t\, (1+\delta)^2) = (1+\delta) \, F^{\e}(t)$,
and we compute
\begin{eqnarray*}
\frac{\partial}{\partial t} \, [\, \tilde{F}^{\e\delta}(t\, (1+\delta)^2) - F^{\e}(t)  \, ]  &=&
 Q_k^\e\, \nu^\e(t)\, (1+\delta)  - Q_k^\e\, \nu^\e(t) \\
&=&  \delta \,  Q_k^\e\, \nu^\e(t)
\end{eqnarray*}
which implies that 
\begin{equation}
\label{eq-diff}
\frac{\partial}{\partial t} \, | \, \tilde{F}^{\e\delta} (\cdot,t\, (1+\delta)^2) - F^\e(\cdot,t) \, | \le 
\delta\,  Q_k^\e(t)
\end{equation}
where $Q_k^\e(t)$ is the speed of the flow (\ref{eqn-qk}) starting at
the  surface $\Sigma^\e_0$. 

Denote, as  before by  $T_\e$ the extinction time of $\Sigma^\e_t$ and let 
$T  = \lim_{\e \to 0} T^\e$.  Take any $\tau < T$. Then, there  exist $\delta_0 > 0$ and $\rho = \rho(\tau)$ so that 
$$B_{\rho} \subset \Int(\Sigma^{\e}_t), \qquad \forall \delta < \delta_0, \qquad \forall
t  \le \tau < T^\e \le T$$
for an  $\e=\e(\delta)$ which is defined as above. 
By Theorem \ref{thm-strict-conv}  it follows that the speed $\tilde Q_k^{\e\delta}$ of
$\tilde \Sigma_t^{\e\delta}$ satisfies 
\begin{equation}
\label{eq-uni-est}
|\tilde Q_k^{\e\delta} (t)| \le C(\tau, \|\Sigma\|_{C^{1,1}}), \qquad \forall \,\, \delta \le \delta_0, \quad \forall \,\, t \le \tau.
\end{equation} 
Estimate (\ref{eq-diff}) yields to 
\begin{eqnarray}
\label{eq-one-part}
| \tilde{F}^{\e \delta} (t\, (1+\delta)^2) - F^\e(t)| &\le& |\tilde{F}^{\e\delta}_0 - F^\e_0| + \delta\,  C(\tau)  \\
&\le& \delta\, ( |F^\e_0| + C(\tau)) \nonumber  \\
&\le& \tilde C(\tau)\, \delta \nonumber 
\end{eqnarray}  
Since $\tilde{F}^{\e\delta} (t)$ solves the \eqref{eqn-qk} with   and $\tilde{Q}_k^{\e\delta}(t) = \frac{Q_k^\e(t \,  (1+\delta)^{-2})}{1+\delta}$,  it follows from (\ref{eq-uni-est})  that
\begin{eqnarray}
\label{eq-two-part}
|\tilde{F}^{\e\delta} (t(1+\delta)^2) - \tilde{F}^{\e\delta} (t)| &\le& C(\tau)\,  \tau \, ((1+\delta)^2 - 1) + |\tilde{F}^{\e\delta}_0 - F^\e_0|  \\
&\le& ( \tilde{C}(\tau)  + |F^\e_0| ) \, \delta \leq \bar C(\tau) \, \delta  \nonumber 
\end{eqnarray}
Combining (\ref{eq-one-part}) and (\ref{eq-two-part}) yields to 
\begin{equation}
\label{eq-close}
|\tilde F^{\e\delta} (t) - {F}^\e(t)| \le C(\tau)\,  \delta, \qquad \forall t  \in [0,\tau], \,\,\, \delta < \delta_0
\end{equation}
for $\e=\e(\delta)$ defined as above. 
Here,  $C(\tau)$ is a uniform constant that does not depend on $\delta$. In particular, (\ref{eq-close}) implies that the viscosity solutions which are obtained as  the limits of $\{F^\e(t)\}$ and $\{\tilde{F}^{\e\delta}(t)\}$,  as $\delta \to 0$ (whose existence is justified by Proposition \ref{prop-existence}) are the same.

We will now conclude the proof of uniqueness. Let $\Sigma^1_t$ and $\Sigma^2_t$ be two families of viscosity solutions to (\ref{eqn-qk}) starting at $\Sigma_0$.  
Let $\{\Sigma^{\e}\}$ and $\{\tilde{\Sigma}^{\e\delta}\}$ be two families of approximations of $\Sigma$ by strictly convex surfaces taken as above. It follows by the  Definition \ref{def-visc}
of viscosity solutions, that both $\Sigma^1_t$ and $\Sigma^2_t$
satisfy \eqref{eqn-encl}. 
Hence,  from  (\ref{eq-close}) we have 
$$|F^1(t) - F^2(t)| \le |\tilde F^{\e\delta}(t) - {F}^{\e}(t)| \le C(\tau)\, \delta,
\qquad \forall t < \tau.$$  Letting   $\delta\to 0$ we  obtain that $\Sigma^1_t = \Sigma^2_t$ for all $t\in [0,\tau]$. Since $\tau < T$ was arbitrary, we conclude that $\Sigma^1_t = \Sigma^2_t$,  for all $t\in [0,T)$, which finishes  our proof. 
\end{proof}

\subsection{Discussion on strict convexity}  In this section we will give
some observations as to  when a  convex surface,  which is not necessarily strictly convex, 
actually becomes strictly convex 
as soon as it moves away from the initial surface.  We first observe that the speed $Q_k$
is bounded from below away from zero at points of $\Sigma_t$ which are away
from the initial surface $\Sigma_0$.

\begin{prop}\label{prop-convex} Assume that $\Sigma_0$ is a $C^{1,1}$ compact convex hypersurface embedded in $\R^{n+1}$ 
and  let $\Sigma_t$   denote the unique 
$C^{1,1}$   solution
of the \eqref{eqn-qk} flow with initial data $\Sigma_0$ which exists on $0 < t < T$. Assume that at some point $P   \in \Sigma_{t_0}$,  with   $t_0 < T$,  we have  $d_P:=\dist (P, \Sigma_0) >0$. Then, there exist positive constants   $\delta > 0$, $\tau > 0$ and $c >0$,  depending only on $d_P$ and the diameter of the initial surface $\Sigma_0$, such that the speed $Q_k \geq c >0$ at all points $Q \in \Sigma_t$  with  $\dist_{M^n} (P,Q) < \delta$ and $t\in [t_0-\tau,t_0+\tau]$.
\end{prop}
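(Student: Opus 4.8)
The plan is to localize in space and time and compare $\Sigma_t$ from inside with a smooth strictly convex hypersurface, using the approximating sequence $\Sigma_t^\e$ and the apriori estimates of Theorem \ref{thm-strict-conv}. First, since $d_P = \dist(P,\Sigma_0) > 0$, by convexity of the enclosed regions $\{\Omega_t\}$ and continuity of the flow (the total displacement up to time $t_0$ is finite, controlled by $T$ and the diameter), there is $\tau > 0$ and $\rho > 0$ with $\rho \leq \tfrac14 d_P$ such that the ball $B_\rho(P')$, for a suitable center $P'$ near $P$ on the inner side, is enclosed by $\Sigma_t$ for all $t \in [t_0-\tau, t_0+\tau]$. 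By monotonicity of $\Sigma_t^\e$ in $\e$ and $C^{1,1}$ convergence $\Sigma_t^\e \nearrow \Sigma_t$ (Proposition \ref{prop-existence}), we may fix $\e$ small so that $B_{\rho/2}(P')$ is enclosed by $\Sigma_t^\e$ on the same time interval, while $\Sigma_t^\e \prec \Sigma_t$.

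Next I would invoke Theorem \ref{thm-strict-conv} applied to $\Sigma_t^\e$ on $[t_0-\tau, t_0+\tau]$: since a ball of radius $\rho/2$ stays enclosed, we get a uniform bound $|A^\e(t)| \leq C$ with $C = C(\rho/2, \|\Sigma_0\|_{C^{1,1}})$, hence $H^\e \leq nC$, hence (using concavity/homogeneity and \eqref{eq-positive}) an \emph{upper} bound $Q_k^\e \leq C'$. The key additional input is a \emph{lower} bound on $Q_k^\e$ near the image point of $P$. For this I would use a barrier argument as in Dieter's construction (\cite{D}): at time $t_0-\tau$ the surface $\Sigma^\e_{t_0-\tau}$ encloses the ball $B_{\rho/2}(P')$ and is enclosed by a large sphere $S_R$; one places an appropriate smooth, radially symmetric convex hypersurface $N$ between them, touching $\Sigma^\e_{t_0-\tau}$ near the relevant point, on which $Q_k \geq 2c_0 > 0$; running $N$ forward by \eqref{eqn-qk} for time $\leq 2\tau$ (with $\tau$ shrunk if needed), $Q_k$ on $N_t$ stays $\geq c_0$, and the comparison principle for strictly convex solutions forces $\Sigma_t^\e$ to lie between $N_t$ and $S_R$-evolution, which by the geometry pins the relevant boundary piece and yields $Q_k^\e \geq c_0$ at points within some fixed intrinsic distance $\delta$ of the image of $P$, uniformly in $\e$.

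Finally, passing $\e \to 0$: the $C^{1,1}$ (hence $C^2$) bounds on $\Sigma_t^\e$ give $C^{1,\alpha}$ convergence of the second fundamental forms along a subsequence, so the principal curvatures converge, and the bound $Q_k^\e \geq c_0$ on the relevant neighborhood passes to the limit, giving $Q_k \geq c := c_0 > 0$ on $\{Q \in \Sigma_t : \dist_{M^n}(P,Q) < \delta, \ t \in [t_0-\tau, t_0+\tau]\}$. The main obstacle I anticipate is the barrier step: constructing a rotationally symmetric barrier adapted to the $Q_k$ quotient (rather than a single $S_k$) with a quantitative lower speed bound that survives for a definite time, and checking that the comparison actually transfers the lower bound to the correct patch of $\Sigma_t^\e$ rather than merely to the barrier; this is where Dieter's cylindrical barriers from \cite{D} and the convexity-based control of $A$ by $H$ must be combined carefully, with all constants depending only on $d_P$ and $\diam(\Sigma_0)$.
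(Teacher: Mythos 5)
Your strategy diverges substantially from the paper's proof, and the divergence creates a real gap. The paper's argument is short and reuses a tool you already invoked: the monotone quantity $\F = \langle F - P, \nu \rangle + 2tQ_k$ introduced in the proof of Theorem~\ref{thm-strict-conv}. Since $d_P > 0$, the ball $B_{d_P/2}(P)$ lies inside $\Sigma_0$, so Lemma~\ref{lem-lower} gives $\F_{\min}(0) = \langle F - P, \nu \rangle_{\min}(0) \geq c_0 > 0$; the evolution equation \eqref{eq-mon} and the maximum principle then propagate this forward, $\F_{\min}(t) \geq c_0$ for all $t < T$. Evaluating at points $(q,t)$ with $\dist_{M^n}(p,q) < \e$ and $|t - t_0| \leq \tau$, where $\tau,\e$ are chosen (uniformly in the approximating index, using Theorem~\ref{thm-strict-conv}) so that $|F(q,t) - P| \leq c_0/2$, the term $\langle F - P,\nu\rangle$ is at most $c_0/2$ in absolute value, which forces $2t\,Q_k \geq c_0/2$, i.e.\ $Q_k \geq c_0/(4t_0)$. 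The paper runs this on a decreasing sequence $\Sigma^i_0 \searrow \Sigma_0$ of strictly convex approximators and passes to the limit. No barrier is needed.

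By contrast, the barrier step you propose is genuinely problematic. To extract a lower bound on $Q_k^\e$ at a specific point of $\Sigma^\e_t$ near $P$ via comparison, you need a barrier touching $\Sigma^\e_t$ from \emph{outside} at that point; an inner barrier that $\Sigma^\e_t$ encloses gives, at a touching point, $\lambda_i(\Sigma^\e_t) \leq \lambda_i(N_t)$, i.e.\ an \emph{upper} bound on $Q_k^\e$, which is the wrong direction. Dieter's cylindrically symmetric barriers in \cite{D} do supply a quantitative lower speed bound, but their construction is powered by the hypothesis $S_{k-1}^n > 0$ uniformly on the initial hypersurface, which is precisely the assumption this paper drops. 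You would need to produce an outer barrier touching $\Sigma^\e_t$ near the image of $P$ on a whole time interval, with a lower speed bound uniform in $\e$, without the $S_{k-1}^n>0$ hypothesis; you flag this as "the main obstacle" but do not resolve it, and it is far from clear it can be resolved this way. The monotone quantity $\F$ is the device that circumvents this difficulty entirely, and it is the natural tool here since it is the same quantity that drives Theorem~\ref{thm-strict-conv}.
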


\begin{proof} The idea of the proof is simple. For the given point $P$ we will consider the
quantity 
$$\F := \langle F - P,  \, \nu \rangle + 2t\, Q_k$$
which we already introduced  in the proof of Theorem \ref{thm-strict-conv} where  we 
also showed
that its minimum is increasing in time. Since $B_{{d_P}/{2}}(P)$ is strictly contained
in the initial surface $\Sigma_0$, Lemma \ref{lem-lower} implies that at time $t=0$ we
have $\F_{\min}(0) \geq c_0 >0$
for some constant $c_0 >0$ depending only on $d_P$ and the diameter of  $\Sigma_0$. Hence, 
$$\F(p,t_0) = 2t_0\, Q_k(p,t_0)  \geq c_0 >0$$
implying that the speed $Q_k$ of the surface is strictly positive near $P$
(here we denote by $p \in M^n$ the point such that $F(p,t_0)=P$). 

To make the above argument rigorous, we  let $\Sigma_0^i$ be a decreasing sequence of strictly convex surfaces 
which approximate $\Sigma_0$, i.e. we have $\Sigma^i_0 \searrow  \Sigma_0$
in the $C^{1,1}$ norm. We denote by $\Sigma^i_t$ the solution of the \eqref{eqn-qk}
flow with initial data $\Sigma^i_0$ and by $T^i$ its vanishing time. Then, it follows
from the proof of Theorem \ref{thm-main1} that $\Sigma^i_t \searrow  \Sigma_t$
in the $C^{1,1}$ norm for all $0 < t < T$. Pick points $P^i \in \Sigma^i_{t_0}$ such
that $P^i \to P$ and choose $i_0$ sufficiently large so that 
$$\dist(P_i, \Sigma^i_0) \geq \frac{d_p}{2}:=d >0, \qquad \forall {i \geq i_0}.$$
Fixing $i \geq i_0$  for the moment, we consider the quantity
$$\F^i (\cdot,t) =  \langle F^i  - P,  \, \nu \rangle + 2t\, Q_k^i$$
with $F^i$ and $Q_k^i$ denoting the position function and speed of $\Sit$
respectively. Since, $B_{d}(P_i)$ is contained in the initial surface $\Sii_0$,
Lemma \ref{lem-lower} implies that
$$\F^i_{\min} (0) \geq c_0>0$$
for some constant $c_0$ depending only on $d$ and the diameter of $\Sigma_0$. The maximum
principle applied to the evolution of $\F^i$ (as in the proof of Theorem \ref{thm-strict-conv}) implies that
\begin{equation}\label{eqn-verygood}
\F^i_{\min} (t) \geq c_0 >0, \qquad \forall t < T.
\end{equation}
Denote by $p \in M^n$ the point at which $F(p,t_0)=P$. 
For the given time $t_0$, choose $\tau >0, \e >0$ sufficiently  small  so  that
$$|F^i(q,t) - P | \leq \frac {c_0}2, \qquad   \forall  t \in [t_0-\tau, t_0+\tau], \quad \dist_{M^n}(p,q) < \e$$
with $c_0$ the constant in \eqref{eqn-verygood}. 
The a priori estimates in Theorem \ref{thm-strict-conv} imply that $\tau$ and $\e$
can be chosen to be independent of $i$. It follows from \eqref{eqn-verygood}
that
$$
Q_k^i(q,t) \geq \frac {c_0}{4t} \geq c >0, \qquad   \forall  t \in [t_0-\tau, t_0+\tau], \quad \dist_{M^n}(p,q) < \e.
$$
The proposition now follows from the observation that $\Sit$ have  uniformly bounded
$C^{1,1}$ norms after we pass to the limit. Passing to the limit yields  
\begin{equation}\label{eqn-verygood2}
Q_k (q,t) \geq  c  >0, \qquad   \forall  t \in [t_0-\tau, t_0+\tau], \quad \dist_{M^n}(p,q) < \e.
\end{equation}

\end{proof}

\begin{cor}  Assume that $\Sigma_0$ is a $C^{1,1}$ compact convex hypersurface embedded in $\R^{n+1}$ 
and  let $\Sigma_t$   denote the unique 
$C^{1,1}$   solution
of the \eqref{eqn-qk} flow with initial data $\Sigma_0$ which exists on $0 < t < T$. If $\dist(\Sigma_t,\Sigma_0) >0$,  then $\Sigma_t$ is strictly convex. 
\end{cor}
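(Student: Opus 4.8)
The plan is to read the Corollary off from Proposition \ref{prop-convex}. Suppose that, for the value of $t$ in question, $\dist(\Sigma_t,\Sigma_0)>0$. Then every point $P\in\Sigma_t$ satisfies $d_P:=\dist(P,\Sigma_0)>0$, so Proposition \ref{prop-convex}, applied at $P$ with $t_0=t$, furnishes a constant $c_P>0$ with $Q_k(P)\ge c_P>0$. Hence the speed $Q_k$ is strictly positive at every point of $\Sigma_t$. (If one wants a quantitative version, covering the compact surface $\Sigma_t$ by finitely many of the neighbourhoods supplied by Proposition \ref{prop-convex} and using that its constants depend on the base point only through $\dist(P,\Sigma_0)\in[\dist(\Sigma_t,\Sigma_0),\diam\Sigma_0]$ and $\diam\Sigma_0$ yields a uniform bound $Q_k\ge c>0$ on all of $\Sigma_t$; but the pointwise positivity already suffices below.)

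Next I would convert positivity of the speed into strict convexity. Since $\Sigma_t$ is convex, its principal curvatures $\lambda=(\lambda_1,\dots,\lambda_n)$ are nonnegative everywhere, so every elementary symmetric polynomial satisfies $S_j^n(\lambda)\ge 0$; combined with $Q_k=S_k^n(\lambda)/S_{k-1}^n(\lambda)>0$ this forces $S_{k-1}^n(\lambda)>0$ and $S_k^n(\lambda)>0$ at each point of $\Sigma_t$. Consequently $\Sigma_t$ can contain no flat piece: on a relatively open portion of $\Sigma_t$ lying in an affine hyperplane all the $\lambda_i$ would vanish and $Q_k$ would be identically $0$ there, contradicting $Q_k>0$. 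Thus the flat side of $\Sigma_t$ is empty, i.e. $\Sigma_t$ is strictly convex. When $k=n$ one moreover has $S_n^n(\lambda)=\lambda_1\cdots\lambda_n>0$ everywhere, which is literally positive definiteness of the second fundamental form; this in particular covers the harmonic mean curvature flow ($n=k=2$) of the Remark.

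I do not expect a genuine obstacle in this argument: all the analytic content — the monotonicity in time of $\F_{\min}$ for the quantity $\langle F-P,\nu\rangle+2tQ_k$ and the passage to the $C^{1,1}$ limit of the strictly convex approximations $\Sigma^i_t$ — has already been carried out in the proof of Proposition \ref{prop-convex}, so the Corollary is a short consequence. The only step that deserves a moment's care is the passage from "$Q_k>0$ at every point" to strict convexity, and this is handled entirely by the elementary-symmetric-function structure of $Q_k$ together with the fact that convexity of $\Sigma_t$ keeps all $\lambda_i\ge 0$, as above.
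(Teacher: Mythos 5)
Your reduction to ``$Q_k>0$ at every point of $\Sigma_t$'' via Proposition \ref{prop-convex} is fine and matches the paper. The gap is in the second half, where you claim that positivity of $Q_k$ plus convexity ($\lambda_i\ge 0$) rules out strict convexity failing. It does not: for $k<n$, $Q_k(\lambda)>0$ only guarantees that $S_k^n>0$ and $S_{k-1}^n>0$, i.e.\ that at least $k$ of the principal curvatures are positive; the smallest $n-k$ of them can all vanish. For instance $\lambda=(1,1,0)$ with $n=3$, $k=2$ gives $S_2=1$, $S_1=2$, $Q_2=1/2>0$ while $\lambda_3=0$. Your argument only rules out a \emph{flat piece} (a relatively open set in a hyperplane, where \emph{all} $\lambda_i=0$), which is a strictly weaker conclusion than strict convexity; a generalized ``capsule'' (sphere bundle over an interval with round caps) has no flat piece yet has a vanishing principal curvature on an open set. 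The paper itself makes exactly this point in the discussion immediately following the corollary: Proposition \ref{prop-convex} shows the $k$ largest curvatures are positive away from $\Sigma_0$, but ``some of the other principal curvatures may vanish.''

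The missing ingredient is the constant rank theorem of Bian and Guan \cite{BG}. The paper's route is: $Q_k>0$ makes \eqref{eqn-qk} strictly parabolic near $\Sigma_t$, hence $\Sigma_t$ is smooth; the constant rank theorem then forces the rank of the second fundamental form to be constant on $\Sigma_t$; compactness of $\Sigma_t$ supplies at least one point where the second fundamental form has full rank; constancy of the rank then propagates strict convexity to the whole surface. None of this follows from the elementary-symmetric-function algebra alone. (Your observation about $k=n$ is correct---$S_n^n=\prod\lambda_i>0$ with $\lambda_i\ge0$ does force all $\lambda_i>0$---but the corollary is stated for general $k$, and the $k=n$ case is treated separately in the paper as a distinct corollary.)
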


\begin{proof} We combine Proposition \ref{prop-convex} and a recent constant rank theorem  by Bian and Guan in \cite{BG}. By the previous proposition $Q_k \geq C(t) >0$ on $\Sigma_t$. 
This in particular implies that the equation \eqref{eqn-qk} is strictly
parabolic and that the surface $\Sigma_t$ is smooth. It then follows from the constant
rank theorem in  \cite{BG} that at  any given time $t >0$ the rank of the second fundamental form
of the surface $\Sigma_t$ is constant. Since  $\Sigma_t$ is a smooth compact 
surface, there exists at least a point $P$ at which $\Sigma_t$ is strictly convex,
which forces the whole surface to be strictly convex, finishing the proof. 

\end{proof}

We next observe that combining the previous corollary and the main result of Dieter in \cite{D} we obtain the following:

\begin{cor} Assume that $\Sigma_0$ is a $C^{1,1}$ compact convex hypersurface embedded in $\R^{n+1}$ 
and  let $\Sigma_t$   denote the unique 
$C^{1,1}$   solution
of the \eqref{eqn-qk}  flow with initial data $\Sigma_0$ which exists on $0 < t < T$. If $S_{k-1}^n >0$
uniformly on $\Sigma_0$, then the solution $\Sigma_t$ is strictly convex for all $t >0$.
\end{cor}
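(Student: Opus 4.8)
The plan is to bootstrap from the previous corollary, whose hypothesis was $\dist(\Sigma_t,\Sigma_0)>0$, to the present stronger conclusion, which asserts strict convexity for \emph{all} $t>0$ under the condition $S_{k-1}^n>0$ on $\Sigma_0$. The mechanism is Dieter's result from \cite{D}: when $S_{k-1}^n(\lambda)>0$ holds uniformly on a smooth convex initial hypersurface, she constructs cylindrically symmetric barriers showing that the speed $Q_k$ acquires a uniform positive lower bound instantaneously, so the flow becomes strictly parabolic and the evolving surface is smooth and strictly convex for $t>0$. So the first step is to transfer this conclusion through the approximation scheme already built in Section~\ref{sect-existence}.

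First I would take the decreasing family $\Sigma_0^i\searrow\Sigma_0$ of smooth strictly convex surfaces approximating $\Sigma_0$ in $C^{1,1}$, as in the proof of Proposition~\ref{prop-existence} and Proposition~\ref{prop-convex}, with $\Sigma_t^i$ the smooth solutions and $\Sigma_t^i\searrow\Sigma_t$ in $C^{1,1}$ on $0<t<T$. The hypothesis $S_{k-1}^n>0$ uniformly on $\Sigma_0$ is an open condition on the $1$-jet of the surface, so for $i$ large $S_{k-1}^n\geq c_0>0$ uniformly on $\Sigma_0^i$ as well, with $c_0$ independent of $i$. Applying Dieter's estimate to each $\Sigma_t^i$ gives: for each $\tau<T$ there is $c(\tau)>0$, independent of $i$, with $Q_k^i\geq c(\tau)>0$ on $\Sigma_t^i$ for $t\in[\tau,T)$ — equivalently on any $[\tau_1,\tau_2]\Subset(0,T)$. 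Combined with the uniform $C^{1,1}$ bounds of Theorem~\ref{thm-strict-conv}, the equation \eqref{eqn-qk} is uniformly strictly parabolic on $\Sigma_t^i$ away from $t=0$, so standard parabolic regularity yields uniform (in $i$) bounds on all higher derivatives on compact time-subintervals of $(0,T)$; passing to the limit, $\Sigma_t$ is smooth and satisfies $Q_k\geq c(\tau)>0$ for each $\tau<T$.

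It then remains to upgrade ``$Q_k>0$ and smooth'' to ``strictly convex.'' Here I would invoke, exactly as in the proof of the preceding corollary, the constant rank theorem of Bian and Guan \cite{BG}: since \eqref{eqn-qk} is now strictly parabolic and $\Sigma_t$ is smooth and convex, the rank of the second fundamental form is constant at each fixed $t>0$; a smooth compact convex hypersurface is strictly convex at some point (e.g.\ where it touches its circumscribed sphere), so the rank must be full everywhere, i.e.\ $\Sigma_t$ is strictly convex for every $t>0$.

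The main obstacle is the uniformity-in-$i$ of Dieter's barrier construction: one must check that her cylindrically symmetric barriers depend only on the uniform lower bound $c_0$ for $S_{k-1}^n$, on the diameter, and on the inradius (all of which are controlled uniformly for the $\Sigma_0^i$), and not on finer geometric data of the smooth approximants. Once this uniform lower speed bound is in hand, the remaining steps — interior parabolic regularity and the constant rank argument — are routine given the tools already assembled in the paper.
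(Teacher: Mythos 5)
Your proposal is essentially the paper's own argument: invoke Dieter's lower bound $Q_k \geq C(t) > 0$ under the hypothesis $S_{k-1}^n > 0$, and then conclude strict convexity exactly as in the preceding corollary via the Bian--Guan constant rank theorem. The additional care you take in transferring Dieter's estimate through the approximating family $\Sigma_0^i \searrow \Sigma_0$ is a reasonable elaboration of a step the paper leaves implicit, but it does not change the route.
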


\begin{proof} The main result by Dieter in \cite{D} shows that if $S_{k-1}^n>0$
uniformly on $\Sigma_0$, then the speed $Q_k$ satisfies the bound $Q_k > C(t) >0$ 
on $\Sigma_t$. Hence, as in the previous corollary $\Sigma_t$ is strictly convex. 

\end{proof}

One may ask:  does it follow  from Proposition  \ref{prop-convex} that the surface
$\Sigma_t$  is strictly convex locally near points $P$ which are  away from the initial surface ? 
The same proposition 
shows  that the $k$ largest principal curvatures $\lambda_1, \cdots,\lambda_k$ are positive at points of $\Sigma_t$ 
which are away from $\Sigma_0$. However, some of the other principal curvatures
may vanish. On the  other hand, since the constant rank
theorem in \cite{BG} is local, Proposition \ref{prop-convex} implies  that the rank of the second
fundamental form of  the surface $\Sigma_t$
is constant on each   connected component of $\Sigma_t \setminus \Sigma_0$. Hence, 
those connected components that  contain at least one point at which the surface is strictly convex 
are indeed strictly convex. The question as to whether $\Sigma_t \setminus \Sigma_0$
is always strictly convex remains open for investigation. 
However, our discussion above leads to  the following observation.

\begin{cor}  Assume that $\Sigma_0$ is a $C^{1,1}$ compact convex hypersurface embedded in $\R^{n+1}$ 
and  let $\Sigma_t$   denote the unique 
$C^{1,1}$   solution
of the ($*_n$) flow with initial data $\Sigma_0$ which exists on $0 < t < T$. Assume that at some point $P   \in \Sigma_{t_0}$,  with   $t_0 < T$,  we have  $d_P:=\dist (P, \Sigma_0) >0$. Then, there exist positive constant  $\delta >0$,  depending only on $d_P$ and the initial surface $\Sigma_0$, such that $\Sigma_t$ is strictly convex  at all points $Q \in \Sigma_t$  such that $\dist (P,Q) < \delta$.
\end{cor}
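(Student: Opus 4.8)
The plan is to specialize Proposition \ref{prop-convex} to the case $k=n$ and then exploit the elementary fact that, for the speed $Q_n$, a positive lower bound forces a positive lower bound on \emph{every} principal curvature --- not merely on the $n$ largest ones, as happens for $k<n$. In particular, no appeal to the constant rank theorem of \cite{BG} will be needed here; the case $k=n$ is special precisely because the positivity of the ``$k$ largest'' principal curvatures is then the positivity of all of them.

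First I would apply Proposition \ref{prop-convex} with $k=n$ at the given point $P\in\Sigma_{t_0}$. Since $d_P=\dist(P,\Sigma_0)>0$, the proposition produces constants $\delta>0$, $\tau>0$ and $c>0$, depending only on $d_P$ and $\diam(\Sigma_0)$, such that the speed satisfies $Q_n\ge c>0$ at every point $Q\in\Sigma_t$ with $\dist_{M^n}(P,Q)<\delta$ and every $t\in[t_0-\tau,t_0+\tau]$. As in the first corollary following Proposition \ref{prop-convex}, the bound $Q_n\ge c>0$ makes \eqref{eqn-qk} strictly parabolic on this space-time neighbourhood; combined with the concavity of the speed and the a priori estimate of Theorem \ref{thm-strict-conv}, the Evans--Krylov and Schauder theory yields interior smoothness of $\Sigma_t$ there. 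Hence the principal curvatures $\lambda_1(Q,t)\ge\cdots\ge\lambda_n(Q,t)\ge 0$ are genuinely defined and continuous at such points, and ``strict convexity at a point'' is meaningful in the classical sense.

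Next I would convert the speed bound into a curvature bound. Fix such a point $(Q,t)$. Since $Q_n=S_n^n(\lambda)/S_{n-1}^n(\lambda)\ge c>0$ and $Q_n$ vanishes whenever some $\lambda_i=0$, all the $\lambda_i$ must be strictly positive; dividing numerator and denominator of $Q_n$ by $\lambda_1\cdots\lambda_n$ then gives $Q_n=\big(\sum_{i=1}^n\lambda_i^{-1}\big)^{-1}$. In particular $\lambda_n^{-1}\le\sum_i\lambda_i^{-1}=Q_n^{-1}\le c^{-1}$, so $\lambda_i\ge\lambda_n\ge c>0$ for every $i$. Therefore $\Sigma_t$ is strictly convex, with all principal curvatures bounded below by $c$, at every point $Q$ with $\dist_{M^n}(P,Q)<\delta$ and $t\in[t_0-\tau,t_0+\tau]$. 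Finally, since the uniform $C^{1,1}$ bound makes the intrinsic distance $\dist_{M^n}$ and the ambient distance comparable near $P$, after shrinking $\delta$ one may replace $\dist_{M^n}(P,Q)<\delta$ by $\dist(P,Q)<\delta$, which gives the statement.

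I do not expect any serious obstacle. The only point requiring care is justifying smoothness of $\Sigma_t$ on the region where $Q_n\ge c$, so that the principal curvatures are defined pointwise --- but this is exactly the regularity argument already used in the corollaries above. Everything else reduces to the one-line identity $Q_n=\big(\sum_i\lambda_i^{-1}\big)^{-1}\le\min_i\lambda_i$ on the open positive cone.
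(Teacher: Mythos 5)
Your proposal is correct and follows essentially the same route as the paper: both specialize Proposition \ref{prop-convex} to $k=n$ and then use the algebraic fact that $Q_n$ is comparable to the smallest principal curvature $\lambda_n$ (the paper phrases this as $Q_n/\lambda_n$ being bounded between dimensional constants, whereas you give the cleaner sharp inequality $Q_n = (\sum_i \lambda_i^{-1})^{-1} \le \lambda_n$). The regularity remark (Evans--Krylov plus Schauder on the region where $Q_n\ge c$) is a reasonable explicit justification of a step the paper leaves implicit, but otherwise there is no substantive difference.
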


\begin{proof} Denote by $\lambda_j$,
$j=1,\cdots, n$ the principal curvatures of the surface $\Sigma_t$ and
assume that 
$
\lambda_1 \geq \lambda_2  \geq \cdots \geq \lambda_n.
$
Hence, 
$$Q_n^i \approx  \frac{ \lambda_1^i \cdots \lambda_{n-1} \, \lambda_n}{ \lambda_1 \cdots \lambda_{n-1} } = \lambda_n$$
in the sense that the ratio of $Q_n /\lambda_n$ is bounded from above and
below by positive constants depending only on the dimension $n$.
It follows  from \eqref{eqn-verygood2}  that
$$\lambda_n(q,t) \geq c >0, \qquad   \forall  t \in [t_0-\tau, t_0+\tau], \quad \dist_{M^n}(p,q) < \e
$$
where $c >0$ depends only on $d_p$, $\Sigma_0$ and  the dimension $n$. Since, 
$\lambda_n$ is the smallest of the principal curvatures of $\Sigma_t$, our
result readily follows.

\end{proof}

We conclude our  discussion with the open questions: (i) Does the surface $\Sigma_t$ become strictly convex before it  becomes extinct  ? Or in the case that (i) is not valid:  (ii)   Does the surface $\Sigma_t$ always shrink to a point ?


\section{Part II: The evolution of a surface with flat sides}\label{sect-flat}

In this section we will study the evolution of a convex surface with flat sides under the
\eqref{eqn-qk} flow for $2\leq k\leq n$. Our goal is to give the proof of Theorem \ref{thm-main2}.
Let us briefly outline the steps of its proof. 
The \eqref{eqn-qk}  flow can be seen as a free boundary problem arising from the
degeneracy near the flat side of the fully nonlinear parabolic PDE
which describes the flow. Via  a coordinate change we will   show that solving this free boundary problem is equivalent to solving  an {\it initial value problem} of the form
\begin{equation}\label{eqn-ivp}
\begin{cases}
 Mw =0
& \qquad \mbox{on}\,\,{\mathcal{D}}\times [0,\tau]\\\
w=w_0 & \qquad \mbox{at} \,\,\,\,\,\,\,t=0%
\end{cases}
\tag{IVP}
\end{equation}
on the unit ball in $\R^n$
$$ \mathcal{D} = \{ x:=(x_1,\cdots,x_n) \in \R^n ; \,\,\, |x| \leq 1 \}.$$

\vskip 0.1 in
\noindent The operator $M$, defined as
$$Mw= w_t - F(t,x_1,\cdots,x_n, w,Dw, D^2w)$$
 is a fully non-linear operator which becomes degenerate
at $\partial \mathcal D$, the boundary of  $\mathcal D$. We will apply the inverse function theorem between appropriately defined Banach spaces to show that this problem admits
a solution on $0 \leq  t \leq \tau$, for some $\tau >0$..

The linearization of the operator $M$ at a point $\bar w$ close to the initial data $w_0$
can be modeled  (after straightening the boundary near  $z:=x_1=0$)   on the degenerate equation
\begin{equation}\label{eqn-hl}
f_t =z^2\,  \tilde a_{11}  f_{11} + 2 \, z\, \tilde a_{1i}  f_{1i} + \tilde a_{ij} f_{ij} +z \tilde b_{1} f_{1} + \tilde b_i \,  f_{i} + \tilde c \, f,\qquad 
 i,j \neq 1
\end{equation}  
\noindent  on the half space $z >0$ with no extra conditions
on $f$ along the boundary $z=0$.     
The diffusion in the above equation is
governed by the {\em singular}  Riemannian metric $$ds^2=d \bar s^2+|dt|$$ where
\begin{equation}\label{eqn-metric}
d \bar s^2 = {dz^2 \over z^2}+dx_2^2+\cdots + dx_n^2.
\end{equation}
\par\noindent We notice that the distance (with respect to the singular metric
$\bar s$) of an interior point ($z>0$) from the boundary ($z=0$) is {\em
infinite}. This distinguishes our problem from other, previously
studied, degenerate free-boundary problems such as the degenerate
Gauss curvature flow \cite{DH2}, \cite{DL} and the porous medium equation
\cite{DH1}.
\vskip 0.01in
The results in this part are generalizations,  in dimensions $n \geq 3$,  of the
results in \cite{CD} for the harmonic mean curvature flow in dimension $n=2$.
Their proofs are similar to the corresponding proofs in \cite{CD}. We will only
give the main steps,  referring the reader to \cite{CD} for
the details.

\subsection {Local Change of Coordinates}\label{local}
In Section \ref{global} we will give the global change of coordinate which transforms our free-boundary
problem \eqref{eqn-qk} to a degenerate problem of the form \eqref{eqn-ivp} on  a domain with fixed boundary. Since  the 
computations there are quite involved, to motivate our discussion we will present here a local
change of coordinates near the interface which fixes the free boundary and we will
give the definitions of the appropriate Banach spaces where the existence of solutions will be 
shown.

We will assume throughout this section   that the surface $\Sigma_0$ belongs to the class $\mathfrak S$, as defined in the introduction. Let  $\Sigma_t$ be  a solution to the $\eqref{eqn-qk}$ flow on
$[0,\tau)$, for some $\tau>0$ such that  $\Sigma_t= \Sigma_t^1\cup \Sigma_t^2$, with $\Sigma^1_t$ flat and $\Sigma^2_t$ strictly convex. Let $P_0(x_1,\cdots, x_n, 0)$ be a point on the interface 
$\Gamma_{t_0}$,  for $t_0>0$ sufficiently small. Then, the strictly
convex part of surface $\Sigma^2_t$, $t<t_0$ can be expressed locally
around $P_0$ as the graph of a function $z=u(x_1,\cdots x_n, t)$. Let $g=\sqrt{u}$
 be the pressure function. Assuming that  $g$ is of class $C^2$  up to the interface and satisfies condition~($\star$), then we solve locally around the point $P_0$ the equation $z=u(x_1,\cdots x_n, t)$ with respect to $x_1$. This yields to the map
$x_1=f(z,x_2,\cdots x_n,t)$. 
 The condition ~($\star$) on $g$ expressed in terms of $f$ gives the following {\em non-degeneracy condition~($\star \star$)}  in a small neighborhood of $z=0$:
\begin{equation}
 \left (
\begin{split}
-z^{\frac{3}{2}}\,f_{11}\,\,\, & z^{\frac{3}{4}}\,f_{12}\,\,\, &  &\,\,\,  z^{\frac{3}{4}}\,f_{12}\\
z^{\frac{3}{4}}\,f_{12}\,\,\, &\,\,\,\,\, & \,\,\,\,\,&\,\,\,  \,\,\,\,\\
 \,\, & \,\,\, & { - F_{ij}} &\,\,\,\,\,\,\,\,\, \\
z^{\frac{3}{4}}\,f_{1n}\,\,\, & \,\, \,\,\,\,\, & \,\,\,\,&\,\,\,  \,\,\,\,
\end{split}\right ) \geq \bar \lambda I  \tag {$\star \star$}
\end{equation}
in the sense that the eigenvalues of  the above matrix are bounded from below by a number $\bar \lambda >0$. 
\noindent $\{F_{ij}\}$ is the Hessian matrix for the function $f$ with respect to the tangential directions.

\noindent  When $\Sigma_t$ evolves by the \eqref{eqn-qk} flow, then the function
$f$ evolves by a fully-nonlinear evolution equation of the from 
\begin{equation}\label{eqn-ff}
f_t = -\frac{S_k^n([b_{ij}])}{S_{k-1}^n([b_{ij}])}
\end{equation}
where $b_{ij}$ can be expressed in terms of $f$ and
its first and second derivatives. The details are given in Section \ref{sec-th2}. 
Equation \eqref{eqn-ff} becomes degenerate
near the interface $z=0$. Its linearization near the interface $z=0$ is of the form \eqref{eqn-hl}. 
Our goal is to construct a smooth solution to this equation by using the inverse function theorem between appropriately defined Banach spaces which are
scaled according to the singular metric \eqref{eqn-metric}. The main step is
to show  existence in an appropriately weighted $C^{2+\alpha}_{w,\bar s}$ space,
with respect to the metric $\bar s$. The definition of these spaces will be given in
the next section. Once  a $C^{2+\alpha}_{w,\bar s}$ solution is established,  one  can 
prove  the existence of a $C^\infty$ solution by repeated differentiation. Since this 
is similar to the results in \cite{CD} we will omit its proof. 

\subsection{The H\"older  spaces with respect to the singular metric }\label{def:ban}

We will define in this section the Banach spaces on which solutions of degenerate equations of the form \eqref{eqn-hl} are naturally defined. 

We will denote for the next of this section by $\bar x$ points $\bar x:=(x_2,\cdots,x_n) \in \R^{n-1}$
and we will consider points $(z,\bar x) \in \R^{n}$. 
\noindent Let
$\mathcal{A}$ be a compact subset of the half space $\{\,(z,\bar x)\in \R^{n}: \,\,
z\ge 0\,\}$ such that $0 \in {\mathcal
A}$. We define:
$$\begin{array}{lcl} \mathcal{A}^{\circ} & := &
\{\,\,\bar x \,\in\,\R^{n-1}\,:(0,\bar x)\,\in\, \mathcal{A}\,\}
\\  
Q_\tau  &: = &
{\mathcal A}\times [0,\tau], \,\,\,\,  \tau>0\\
 Q_\tau^{\circ}&:= &{\mathcal
A}^{\circ}\times [0,\tau], \,\, \tau >0.
\end{array}
$$

\noindent We define the 
{\it hyperbolic distance} $\bar s(P_1,P_2)$ between two points 
$P_1=(z_1,\bar x_1)$ and $P_2=( z_2,\bar x_2) $ in $\mathcal{A}$ with $z_1>0$, $ z_2>0$ to  be:

\noindent
$$ \bar s(P_1,P_2):= \sqrt{| \ln z_1- \ln z_2|^2+ |\bar x_1- \bar x_2|^2} ,\,\,\,\hbox{  if}\,\,\,  0 < z_1, z_2  \leq 1 $$
\noindent otherwise it is defined to be equivalent to the standard euclidean metric.

\smallskip

\noindent We define the 
{\it parabolic hyperbolic distance} between two points $\tilde P_1=(z_1,\bar x_1,t_1)$ and $\tilde P_2=(z_2,\bar x_2,t_2)$  with  $z_1>0$, $z_2>0$ to be:
$$s(\tilde P_1,\tilde P_2):= \bar s(P_1,P_2)+\sqrt{|t_1-t_2|}$$ 
\noindent where $P_1=(z_1,\bar x_1),\,P_2=(z_2,\bar x_2)$.

\smallskip

We will next define H\"older function  spaces with respect to the metric $s$. 

\noindent Given a function $f$ on
$\mathcal{A}$ we define:
\begin{equation*}
f^{\circ}(\bar x ) : = f(0,\bar x),\qquad 
\tilde{f}(z,\bar x) : =  \frac 1{\sqrt{z}} \,(\,f(z,\bar x)-f^{\circ}(\bar x)\,)
\end{equation*}

\noindent Analogously, given a function $f$ on $Q_\tau$ we define:
\begin{equation*}
f^{\circ}(\bar x,t ) : = f(0,\bar x,t), \qquad 
\tilde{f}(z,\bar x,t) : =  \frac 1{\sqrt{z}} \,(\,f(z,\bar x,t)-f^{\circ}(\bar x,t)\,)
\end{equation*}

\vskip 0.1 in
Let $0<\alpha\leq 1$. We will  define the weighted  H\"older space $C^{\alpha}_{w,\bar s}(\mathcal{A}$) in terms of the above distance.
\noindent We start defining the H\"older semi-norm:
$$\|\,f\,\|_{H^\alpha_{\bar s}(\mathcal{A})}: = \sup\limits_{P_1\not= P_2\in \tilde \A } \frac{|\,f(P_1) -
f(P_2)\,|}{ \bar s [P_1, P_2]^{\alpha}}\,$$
and the norm
$$\|\,f\,\|_{C^\alpha_{\bar s}(\mathcal{A})}: = \|\,f\,\|_{C^0 (\mathcal{A})}
+ \|\,f\,\|_{H^\alpha_{\bar s}(\mathcal{A})}$$ 
\noindent where $\|\,f\,\|_{C^{0}({\mathcal{A}})}:=\sup\limits_{P\in\mathcal{A}}|\,f(P)\,|$.

\noindent  Moreover, we define: $$\|\,f\,\|_{C^{0}_w(\mathcal
A)}:=\|\,f^{\circ}\,\|_{C^{0}(\mathcal A^{\circ})}+\|\,\tilde
f\,\|_{C^{0}({\mathcal A})}.$$

\begin{defn}[The space $C^{\alpha}_{w,\bar s}(\mathcal{A})$]
A function $f$ belongs to $C^{\alpha}_{w,\bar s}(\mathcal{A})$ iff
$f^{\circ}\in C^{\alpha}(\mathcal{A}^{\circ}) \,\, \text{and}\,\,
\,\tilde{f}\in C^{\alpha}_{\bar s}({\mathcal{A}})$. 
The norm of $f$ in the
space $C^{\alpha}_{w,\bar s}(\mathcal{A})$ is defined as:
$$\|\,f\,\|_{C^{\alpha}_{w,\bar s}(\mathcal{A})}:=\|\,f^{\circ}\,\|_{C^{\alpha}(\mathcal{A}^{\circ})}+\|\,\tilde{f}\,\|_{C^{\alpha}_{\bar s}({\mathcal{A}})}.$$
\end{defn}

\smallskip

\begin{remark} We observe that $ f(z,\bar x)\in\,C^{\alpha}_s({\mathcal{A}})$ iff the function
$\bar f(\xi,\bar x):= f(e^\xi,\bar x)$ belongs to the space $C^{\alpha}(\bar \A)$ (this is the 
H\"older space with respect to the standard metric) where $\bar A:= \{ \, (\xi,\bar x)\,\, : \,\,
(e^\xi,\bar x) \in \A \, \}.$
\end{remark}


We will next define weighted  H\"older spaces of second order derivatives with respect to our metric
$\bar s$. 

\begin{defn} [The space $C^2_{w}(\mathcal{A})$]\label{def-space}
We say that a continuous function $f$ on
$\mathcal{A}$ belongs to $C^2_{w}(\mathcal{A})$ if
$f^{\circ}\,\in\,C^2(\mathcal{A}^{\circ})$ and $f$ has continuous derivatives
 $$ f_z,\,\, f_{i},\,\, f_{zz},\,\, f_{zi},\,\, f_{ij}, \qquad i,j=2,\cdots,n$$
\no in the interior of $\mathcal{A}$  such that
$$\frac{1}{\sqrt{z}}\,(f-f^{\circ}),\,\,\sqrt{z}\,f_z,\,\frac{1}{\sqrt{z}}\,(\,f_{_i}-f_{_i}^{\circ}\,),\,
 z^{\frac{3}{2}}\,f_{zz},\, \sqrt{z}\,f_{zi},\,\frac{1}{\sqrt{z}}\,(\,f_{ij}-f_{ij}^{\circ}\,)$$
\noindent extend continuously (with respect to the standard euclidean  metric) up to the boundary $z=0$, for all $i,j=2,\cdots,n$. The norm of $f$ in the space $C^2_{w}(\mathcal A)$ is defined as follows:
$$\|\,f\,\|_{C^2_{w}(\mathcal{A})}:=\|\displaystyle\sum_{m=0}^2\,D^m_{\bar x}\,f^{\circ}\,\|_{C^{0}(\mathcal{A}^{\circ})}+
\displaystyle\sum_{m+n=0}^2\|\,z^m\, D_z^m\,D_{\bar x}^n\,\tilde f\,\|_{C^0({\mathcal{A}})}.$$

\end{defn}

\begin{defn}[The space $C^{2+\alpha}_{w,\bar s}(\mathcal{A})$]
Given $f\in C^{2}_{w}(\mathcal{A})$, we say that $f$ belongs
to $C^{2+\alpha}_{w,\bar s}(\mathcal{A})$ if
$$f^{\circ}\in
C^{2+\alpha}(\mathcal{A}^{\circ})\quad \mbox{and}\quad  z\, f_z, f_i,\,z^2\,f_{zz},  z\, f_{zi},f_{ij},
\qquad i,j=2,\cdots,n$$
 extend continuously up to the boundary  and the extensions
are H\"older continuous on $\mathcal{A}$ of class
$C_{w,\bar s}^{{\alpha}}(\mathcal{A})$. The norm of $f$ in the space
$C^{2+\alpha}_{w,\bar s}(\mathcal{A})$ is defined as:
$$\|\,f\,\|_{C^{2+\alpha}_{w, \bar s}(\mathcal{A})}:=
\|\,f^{\circ}\,\|_{C^{2+\alpha}(\mathcal{A}^{\circ})}+\displaystyle\sum_{m+n=0}^2\|\,z^{m}\,
D_z^m\,D_{\bar x}^n \,f\,\|_{C^{\alpha}_{w,\bar s}(\mathcal{A})}.$$

\end{defn}

\noindent Let $\tau >0$. Similarly as in \cite{CD}, the definitions above can be naturally extended
on a  space-time domain $Q_\tau=\A \times [0,\tau]$ by using the parabolic distance
$d{s}^2=d\bar s^2+|dt|$. We call the resulting spaces $C^{\alpha}_{w,s}(Q_\tau),
C^{2}_{w,s}(Q_\tau)$ and $C^{2+\alpha}_{w,s}(Q_\tau)$
respectively. 


\subsection{The Degenerate Equation on the disc}\label{deg}

\medskip
We will show in Section \ref{global} that the initial value problem \eqref{eqn-qk} can be  
transformed, via a  global coordinate change,  to an  initial value problem of the form \eqref{eqn-ivp}. Its linearization at a point
$\bar w$ close to the  initial date $w_0$ is a  degenerate equation of the form
\begin{equation}\label{eqn-ball}
Lw: = w_t  - \, (\, a^{ij} w_{ij} + b^i\, w_{i} + c\,w \, ) =0
\end{equation}
on the cylinder ${\mathcal D}  \times [0,\tau)$, $\tau >0$,  where
${\mathcal D}$ denotes the unit ball in  ${\R}^n$.
The matrix $\{a^{ij}\}$ is  symmetric and 
under appropriate an change of coordinates near $\partial \D$ which straightens  the
boundary, equation  \eqref{eqn-ball} is  transformed into the degenerate equation  of the form
~({\ref{eqn-hl}).

We define the distance 
function $s$ in ${\mathcal D}$ as follows:
in the interior 
of ${\mathcal D}$, $ s$ it is equivalent to
the standard euclidean distance,
while around any boundary point $P \in \partial 
\mathcal D$, $ s$ is defined as the pull
back of the distance function induced by the metric $d\bar s^2$ defined in \eqref{eqn-metric} 
on the half space $\mathcal S_0  = \{ (z,x_2,\cdots,x_n) :  z \geq 0  \}$,
via a map 
$\varphi: \mathcal S_0  \cap {\mathcal D} \to {\mathcal D}$ that flattens the boundary 
of the ball ${\mathcal D}$ near $P$. We denote by $ds^2$ the associated parabolic distance.

We can now define the spaces $C^\alpha_{w,\bar s}({\mathcal D})$ and $C^{2+\alpha}_{w,\bar s}({\mathcal D})$:
for a fixed and small number $\delta$ in $0 < \delta <1$,
we write  
$${\mathcal D} = {\mathcal D}_{1-\delta/2} \, \cup  ({\displaystyle\bigcup_l}\, 
\left ( {\mathcal D}_{\delta}(P_l) \cap {\mathcal D}) \right )$$
for finite many points $P_l \in \partial {\mathcal D}$, $l \in I$, 
with ${\mathcal D}_{1-\delta/2}$ denoting the ball centered at the origin of radius
$1-\delta/2$ and ${\mathcal D}_{\delta}(P_l)$ denoting the ball  of radius
$\delta$ centered at $P_l$. 

We denote by ${\mathcal D}_+$ the half disk
${\mathcal D}_+ = \{ \, (z,\bar x) \in {\mathcal D} : \,\, z \geq 0 \, \}.$
We can choose charts $\Upsilon_l : {\mathcal D}_+ \to {\mathcal D}_{\delta}(x_l) \cap 
{\mathcal D}$
which flatten the boundary of ${\mathcal D}$ and such that $\Upsilon_l (0) = P_l$, $ l\in\,I$. Let $\{\psi$, $\psi_l\}$ 
be a partition of unity subordinated to the cover
$\{ \, {\mathcal D}_{1-\delta/2},\,
( {\mathcal D}_{\delta}(P_l) \cap {\mathcal D}) \,\} $ of ${\mathcal D}$, with $l\in\, I$.

\begin{defn}[The spaces $C^{\alpha}_{w,\bar s}({\mathcal D})$ and $C^{2+\alpha}_{w,\bar s}({\mathcal D})$]
We define  $C^\alpha_{w,\bar s}({\mathcal D})$ 
to be the space of all functions $w$
on ${\mathcal D}$ such that $w \in C^{\alpha} ({\mathcal D}_{1-\delta/2})$ and 
$w \circ \Upsilon_l \in C^\alpha_{w, \bar s}({\mathcal D}_+)$
 for all $l \in I$. Also,  we  define $C^{2+\alpha}_{w,\bar s}({\mathcal D})$ to be the space  of all functions $w$
on ${\mathcal D}$ such that $w \in C^{2+\alpha }({\mathcal D}_{1-\delta/2})$ and
$w \circ \psi_l \in C^{2+\alpha}_{w,\bar s}({\mathcal D}_+)$ for all $l \in I$.
\end{defn}

In the above definition  $C^{\alpha}$ and $C^{2+\alpha}$ denote the regular H\"older Spaces, 
while $C^{\alpha}_{w,\bar s} ({\mathcal D}_+)$ and $ C^{2+\alpha}_{w,\bar s}({\mathcal D}_+)$ denote the
H\"older Spaces defined in Section~\ref{def:ban}.
One can show that both spaces $C^\alpha_{w,\bar s}({\mathcal D})$ and $C^{2+\alpha}_{w,\bar s}({\mathcal D})$ are Banach spaces 
under the  norms
$$ \|w\|_{C^\alpha_{w,\bar s}({\mathcal D})} = \|\psi \, w\|_{C^{\alpha} ({\mathcal D}_{1-\delta/2})}
+ \sum_{l} \, \|\psi_l \,( w \circ \Upsilon_l)\|_{C^\alpha_{w,\bar s}({\mathcal D}_+)}$$ 
and
$$ \|w\|_{C^{2+\alpha}_{w,\bar s}({\mathcal D})} = \|\psi \, w\|_{C^{2+\alpha}(
{\mathcal D}_{1-\delta/2})}
+ \sum_{l} \, \|\psi_l \,( w \circ \Upsilon_l)\|_{C^{2+\alpha}_{w,\bar s}({\mathcal D})}.$$
\noindent

The above definitions can be extended in
a straight forward manner to the parabolic spaces 
$C^\alpha_{w,s}(Q_\tau)$ and $C^{2+\alpha}_{w,s}(Q_\tau)$, where $Q_\tau$ is the 
cylinder $Q= {\mathcal D} \times [0,\tau]$, for some $\tau >0$.
\smallskip

Before we state the main result in this section, we will give the assumptions on the coefficients
of the equation \eqref{eqn-ball}
on the cylinder $Q = {\mathcal D} \times [0,T)$:  We first assume that for any  $\delta$ in $0<\delta <1$, the  
coefficients $\{a^{ij}\}$, $\{b^i\}$ and $c$
belong to the 
H\"older class $C^{\alpha}(\mathcal D_{1-\delta/2}
 \times [0,T])$, which means that the coefficients 
are of the class $C^{\alpha}$ in the interior of $\mathcal D$. In addition we assume that
the metric $\{ a_{ij} \}$ is strictly elliptic in $\mathcal D_{1-\delta/2}$. 
For a number $\delta$ in $0<\delta <1$,
let  $\Upsilon_l: \mathcal D_+ \to \mathcal D_\delta(P_l) \cap \mathcal D$
be the collection 
of charts which flatten the boundary of $\mathcal D$,
considered above.
We assume that there exists a number $\delta$
so that for every $l \in I$, the coordinate change 
introduced by each of the $\Upsilon_l$ transforms the  
operator $L[w]$ defined in \eqref{eqn-ball} 
on $\mathcal D_\delta(P_l) \cap \mathcal D$, 
into an operator $\widetilde L_l$ on $\mathcal D_+$ of the form
\eqref{eqn-hl}
with the coefficients 
$\tilde  a_{ij}, \, \tilde b_i, \, \tilde c$ 
belonging to the class  and with $\{ \tilde a_{ij} \}$ strictly elliptic. 

\begin{thm}\label{thm:exi2} Assume that the operator $L$ satisfies all the above conditions
on the cylinder $Q_\tau={\mathcal{D}} \times [0,\tau]$.
Then, given any function $w_0 \in C^{2+ \alpha}_{w,\bar s}({\mathcal{D}})$ and any function
$g \in C^\alpha_{w,s}(Q)$
there exists a unique solution $w \in C^{2+ \alpha}_{w,s} (Q_\tau)$
of the initial value problem
\[
\left\{
\begin{array}{ll}
 Lw =g
& in\,\,Q\\
w(\cdot,0)=w_0 & on\,\,\mathcal D%
\end{array}%
\right.
\]
satisfying
\begin{equation}\label{ineq:www}
\|w\|_{C^{2+\alpha}_{w,s}(Q)} \leq C(\tau) \left ( \|w_0\|_{C^{2+\alpha}_{w,\bar s}(\mathcal{D})} + \|g\|_{C^\alpha_{w,s}(Q)} \right ).
\end{equation}
The constant $C(\tau)$ depends  only on the 
numbers $\alpha$, $\tau$, the ellipticity constant of $\{ \tilde a_{ij} \}$ and the H\"older
norms of the coefficients. 
\end{thm}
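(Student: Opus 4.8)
The plan is to prove Theorem \ref{thm:exi2} by the standard strategy for degenerate parabolic equations of this type: first establish the result on the model half-space with the model operator \eqref{eqn-hl}, then use the partition of unity $\{\psi,\psi_l\}$ and the charts $\Upsilon_l$ to patch together a solution on the disc $\mathcal D$, treating the lower-order patching errors by a contraction/continuity argument. First I would reduce to the case $g=0$ and $w_0$ arbitrary, and then further, by subtracting a suitable extension of $w_0$, to the case $w_0=0$ with a modified right-hand side in $C^\alpha_{w,s}(Q_\tau)$; this is harmless because the spaces are defined compatibly via the trace maps $f\mapsto f^\circ$, $f\mapsto\tilde f$. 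The core analytic input is a Schauder-type a priori estimate \eqref{ineq:www} for the model operator on the half-space $\{z>0\}$: one passes to the logarithmic coordinate $\xi=\ln z$, under which the singular metric $d\bar s^2 = dz^2/z^2 + dx_2^2+\cdots+dx_n^2$ becomes the \emph{standard} Euclidean metric (as noted in the Remark following Definition of $C^\alpha_{w,\bar s}$), and the degenerate equation \eqref{eqn-hl} becomes a \emph{uniformly} parabolic equation on a half-space with bounded Hölder coefficients and, crucially, \emph{no boundary condition imposed at} $\xi=-\infty$ — the boundary having been pushed to infinite distance. For such equations the interior Schauder estimates of Krylov–Safonov / Ladyzhenskaya–Solonnikov–Ural'tseva type apply uniformly, and translating back gives exactly the weighted estimate in the $C^{2+\alpha}_{w,s}$ norm.

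The key steps, in order, are: (1) Prove the model estimate and solvability: for the operator $\widetilde L$ of the form \eqref{eqn-hl} on $\mathcal D_+\times[0,\tau]$ with strictly elliptic $\{\tilde a_{ij}\}$ and $C^\alpha_{w,s}$ coefficients, show that $Lw=g$, $w(\cdot,0)=w_0$ has a unique solution $w\in C^{2+\alpha}_{w,s}$ with the bound \eqref{ineq:www}. Existence here follows from the a priori estimate together with the method of continuity, deforming $\widetilde L$ to the model constant-coefficient operator $f_t = \xi$-Laplacian in the logarithmic coordinates, whose solvability is classical. (2) Localize on the disc: writing $w=\psi w + \sum_l \psi_l w$, the equation $Lw=g$ is equivalent to a system for the pieces $w\circ\Upsilon_l$ on $\mathcal D_+$ and $\psi w$ in the interior, where each piece satisfies an equation of the model type with a right-hand side that includes commutator terms $[L,\psi_l]w$ involving only \emph{first} derivatives of $w$ times derivatives of the cutoff. (3) Absorb the commutators: since the commutator terms are lower order, for $\tau$ small — or by iterating on short time intervals and using that the estimate constant $C(\tau)$ is uniform — these terms are controlled by interpolation in the $C^{2+\alpha}_{w,s}$ scale, so the map "data $\mapsto$ solution of the patched problem" is a contraction on a ball in $C^{2+\alpha}_{w,s}(Q_\tau)$; its fixed point is the desired solution. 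Uniqueness follows from the maximum principle applied in the logarithmic coordinates (where the equation is uniformly parabolic on a domain unbounded in one direction, so one needs a Phragmén–Lindelöf-type argument using the boundedness of $w$ up to $z=0$, i.e. the finiteness of $\|\tilde w\|_{C^0}$).

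The main obstacle I expect is step (1), specifically the a priori Schauder estimate for \eqref{eqn-hl} with the \emph{correct weights} — i.e. showing that control of $g$ in $C^\alpha_{w,s}$ and $w_0$ in $C^{2+\alpha}_{w,\bar s}$ yields control of the full weighted jet $z^m D_z^m D_{\bar x}^n w$ in $C^\alpha_{w,s}$, including the delicate behavior of the coefficient $\tilde f$-type quantities $\frac1{\sqrt z}(f - f^\circ)$ and $\frac1{\sqrt z}(f_{ij}-f_{ij}^\circ)$ near $z=0$. The subtlety is that the $\sqrt z$ weighting in the definitions of $C^\alpha_{w,\bar s}$ and $C^2_w$ is \emph{not} simply the parabolic-distance weighting from the metric $\bar s$; it encodes the half-power decay of the pressure-function correction and must be shown to be preserved by the flow of \eqref{eqn-hl}. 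This requires a careful barrier construction adapted to the logarithmic coordinate, exactly as carried out in \cite{CD} for $n=2$; since the argument is structurally identical, I would state the estimate, indicate the logarithmic change of variables and the barrier, and refer to \cite{CD} for the remaining computational details.
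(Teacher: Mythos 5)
Your proposal is correct and follows essentially the same route the paper takes: the paper's own proof is a one-line reference to Theorem 4 of \cite{CD}, and the argument you sketch — reduction to the model half-space operator \eqref{eqn-hl}, passage to the logarithmic coordinate $\xi=\ln z$ where the $\bar s$-metric becomes Euclidean and the equation becomes uniformly parabolic with no boundary imposed at $\xi\to-\infty$, a weighted Schauder a priori estimate obtained with barriers adapted to the $\sqrt z$-weights in the definition of $C^{2+\alpha}_{w,\bar s}$, localization via the charts $\Upsilon_l$ and partition of unity with absorption of commutator errors, method of continuity for existence, and maximum-principle uniqueness using boundedness as $z\to 0$ — is exactly the strategy carried out in \cite{CD} for $n=2$ and invoked here. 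You have also correctly isolated the genuine technical core (the compatibility of the $\sqrt z$-weighting of $\tilde f$ with the $\bar s$-Hölder scale, which is not the naive parabolic-distance weighting), which is precisely the part the paper defers to \cite{CD}.
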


\begin{proof}
The proof follows the arguments of the proof of Theorem 4 in ~\cite{CD}. 
\end{proof}

Finally, the following existence result follows from Theorem \ref{thm:exi2} and the Inverse
Function Theorem between Banach spaces.

\begin{thm}\label{thm:exi3}
Let $w_0$ be a function in   $C^{2+\alpha}_{w,\bar s}(\mathcal D)$.
 Assume that the linearization $DM(\bar w)$ of the 
fully-nonlinear operator
\begin{equation}\label{eqn-M}
Mw = w_t - F(t,u,v,w,Dw, D^2w)
\end{equation}
defined on the cylinder $Q = \mathcal D  \times [0,T]$,
satisfies the hypotheses of Theorem~\ref{thm:exi2}  at
all points $\bar w \in C^{2+\alpha}_{w,s}(Q)$, with 
$||\bar w - w_0||_{ C^{2+\alpha}_{w,s}(Q)} \leq \mu$, for some
$\mu >0$.
Then,  there exists a number $\tau_0$ in $0<\tau_0 \leq T$
depending on $\alpha$,  $\mu$ and the ellipticity of $\{\tilde a_{ij} \}$, for which the initial value problem
\begin{equation}\label{eqn-ivp2}
\left\{\begin{array}{ll} Mw=0 
\qquad &\text{in $\,\,\mathcal D \times [0,\tau_0]$}\\
w(\cdot,0)=w_0 \qquad &\text{on $\,\, \mathcal D$} \end{array}\right. 
\end{equation}
 admits a solution $w$ in the space $C^{2+\alpha}_{w,s}
(\mathcal D \times [0,\tau_0])$. Moreover,
$$||w||_{C^{2+\alpha}_{w,s}(\mathcal D \times [0,\tau_0])}
\leq C\, ||w_0||_{C^{2+\alpha}_{w,s}(\mathcal D)}$$
for some positive constant $C$ which depends only $\alpha$,  $\mu$ and the ellipticity of $\{\tilde a_{ij} \}$.\end{thm}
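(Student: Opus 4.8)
The plan is to deduce Theorem~\ref{thm:exi3} from the linear theory of Theorem~\ref{thm:exi2} via the Inverse Function Theorem between Banach spaces, exactly as in~\cite{CD}. First I would set up the functional-analytic framework: fix the Banach space $\mathcal{X} := \{\, w \in C^{2+\alpha}_{w,s}(Q) : w(\cdot,0) = w_0 \,\}$, or rather work with the affine space $w_0 + \mathcal{X}_0$ where $\mathcal{X}_0$ is the closed subspace of functions vanishing at $t=0$, and the target space $\mathcal{Y} := C^\alpha_{w,s}(Q)$. The operator $M$ in~\eqref{eqn-M} should be shown to be a $C^1$ (indeed smooth) map from a neighborhood of $w_0$ in $w_0 + \mathcal{X}_0$ into $\mathcal{Y}$; this amounts to checking that the fully-nonlinear function $F(t,u,v,w,Dw,D^2w)$, when composed with an element of $C^{2+\alpha}_{w,s}$, lands in $C^\alpha_{w,s}$ and depends smoothly on $w$ — a routine but slightly delicate verification using that the weighted H\"older spaces form a Banach algebra in the appropriate sense (products and smooth functions of weighted-H\"older functions stay weighted-H\"older) and that the $z$-weights in the definition of the norm match the degeneracy of $F$ near $\partial\mathcal D$.

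Next I would compute the Fr\'echet derivative $DM(\bar w): \mathcal{X}_0 \to \mathcal{Y}$ at a point $\bar w$ with $\|\bar w - w_0\|_{C^{2+\alpha}_{w,s}(Q)} \le \mu$, obtaining precisely the linear degenerate operator $L$ of~\eqref{eqn-ball}, i.e. $DM(\bar w)[f] = f_t - (a^{ij}f_{ij} + b^i f_i + c f)$ with coefficients built from the derivatives of $F$ evaluated at $\bar w$. The hypotheses of the theorem guarantee that, for such $\bar w$, these coefficients satisfy all the structural assumptions preceding Theorem~\ref{thm:exi2} — interior $C^\alpha$ regularity, interior strict ellipticity, and the boundary normal form~\eqref{eqn-hl} with strictly elliptic $\{\tilde a_{ij}\}$ after flattening — so Theorem~\ref{thm:exi2} applies and shows $DM(\bar w)$ is a bounded isomorphism from $\mathcal{X}_0$ onto $\mathcal{Y}$, with inverse bounded by the constant $C(\tau)$ there, which for $\tau$ bounded can be taken uniform in $\bar w$ (and uniform in $\tau \le \tau_0$).

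With $M \in C^1$ and $DM(w_0)$ invertible, the Inverse Function Theorem gives a solution of $Mw = 0$ near $w_0$; but one must arrange the solvability on a time interval $[0,\tau_0]$ with $\tau_0$ depending only on the stated data. Here I would use the standard rescaling trick: the nonlinearity $M(w_0 + h) - M(w_0) - DM(w_0)h$ is quadratically small in $h$, and restricting to a short time cylinder $Q_{\tau_0}$ shrinks the relevant norms (the weighted parabolic H\"older norm of the ``error'' terms tends to $0$ as $\tau_0 \to 0$, because of the $\sqrt{t}$-type factors in the parabolic distance), so the contraction-mapping fixed point for $h \mapsto -(DM(w_0))^{-1}\big(M(w_0 + h) - DM(w_0)h\big)$ closes on the ball $\|h\|_{C^{2+\alpha}_{w,s}(Q_{\tau_0})} \le \mu$ once $\tau_0$ is small depending on $\alpha$, $\mu$, the ellipticity of $\{\tilde a_{ij}\}$, and the H\"older norms of the coefficients of $F$ near $w_0$ (which are themselves controlled by $\mu$). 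This yields the solution $w \in C^{2+\alpha}_{w,s}(\mathcal D \times [0,\tau_0])$ and, tracking constants through the fixed-point argument, the bound $\|w\|_{C^{2+\alpha}_{w,s}} \le C \|w_0\|_{C^{2+\alpha}_{w,s}}$.

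The main obstacle is not the abstract IFT machinery but the verification that $M$ genuinely maps $C^{2+\alpha}_{w,s}(Q)$ to $C^\alpha_{w,s}(Q)$ and is $C^1$ there, together with the uniformity of the estimate in Theorem~\ref{thm:exi2} as $\bar w$ varies over the $\mu$-ball: one must check that the $z$-weights built into the norms exactly absorb the degeneracy of the coefficients $a^{ij}$ (which vanish to the right order at $\partial\mathcal D$) and that composition with $F$ preserves the weighted-H\"older structure — this is where the geometry of the singular metric~\eqref{eqn-metric} and the careful choice of Banach spaces in Section~\ref{def:ban} is essential. Since all of this parallels the two-dimensional arguments of~\cite{CD} line by line, I would carry out the reduction as above and refer to~\cite{CD} for the detailed estimates, noting only the points where the dimension $n \ge 3$ requires the tangential Hessian block $\{F_{ij}\}$ in condition~($\star\star$) rather than a single second derivative.
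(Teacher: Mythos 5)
Your proposal takes the same overall route as the paper, namely the Inverse Function Theorem between the weighted H\"older spaces with the invertibility of the linearization supplied by Theorem~\ref{thm:exi2}; the paper itself gives only the one-line citation to Theorem~5 of~\cite{CD}, and your sketch is a reasonable reconstruction of that argument.

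One step in your contraction iteration needs more care, and as written it does not quite close. You iterate
$T(h) = -\bigl(DM(w_0)\bigr)^{-1}\bigl(M(w_0+h)-DM(w_0)h\bigr)$
around the time-independent extension of $w_0$, and you assert that the relevant quantities shrink as $\tau_0\to 0$ because of the $\sqrt{t}$-factors. The quadratic remainder $M(w_0+h)-M(w_0)-DM(w_0)h$ can indeed be made small by exploiting $h(\cdot,0)=0$ (pair one factor measured in the full $C^{2+\alpha}_{w,s}$ norm with the other measured in a lower norm that is $O(\tau_0^{\alpha/2})$). But the zeroth iterate $T(0)=-\bigl(DM(w_0)\bigr)^{-1}M(w_0)$ does not become small in $C^{2+\alpha}_{w,s}(Q_{\tau_0})$ as $\tau_0\to 0$: since $w_0$ is independent of $t$, one has $M(w_0)=-F(t,\cdot,w_0,Dw_0,D^2w_0)$, which is $O(1)$ at $t=0$, and the Schauder estimate~\eqref{ineq:www} returns a solution whose top-order norm stays bounded away from zero. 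Thus the ball $\|h\|\le\mu$ need not be preserved. The standard remedy (and what the argument cited from~\cite{CD} effectively does) is to iterate around a better extension $\bar w$ of $w_0$ into $Q_{\tau_0}$, for instance $\bar w(x,t)=w_0(x)+t\,F(0,x,w_0,Dw_0,D^2w_0)$, so that $M(\bar w)$ vanishes at $t=0$; then $\|M(\bar w)\|_{C^{\alpha'}_{w,s}(Q_{\tau_0})}\to 0$ for any $\alpha'<\alpha$, and after a routine interpolation one obtains a genuine contraction on a short time cylinder. With this modification the rest of your outline (smoothness of $M$ between the weighted spaces, uniformity of the linear estimate over the $\mu$-ball, and the resulting bound on $\|w\|_{C^{2+\alpha}_{w,s}}$) goes through as you describe.
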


\begin{proof} The proof follows the arguments of the proof of Theorem 5 in ~\cite{CD}. 
\end{proof}

\subsection{Global change of coordinates and existence in $C^{2+\alpha}_{w,s}$}\label{global}

In this section we introduce a global change of coordinates which transforms the \eqref{eqn-qk}  flow for a surface $\Sigma_0$ with flat sides  into a fully-nonlinear degenerate parabolic PDE on 
the unit ball $\mathcal D$. We will only give a brief outline,  as  all the results  
are a straightforward generalization of the  2-dimensional case treated in  ~\cite{CD}.

We recall that our initial surface $\Sigma_0$ is of the form $\Sigma_0 = \Sigma_0^1 \cup \Sigma_0^2$ with $\Sigma_0^1$ flat and $\Sigma_0^2$ strictly convex.  Let $\Si_0$ be a smooth surface with boundary   close to $\Sigma^2_0$ and such that $\partial \Si_0 \subset \{ x_{n+1} =0 \}$. Let $S_0 : \mathcal D \to \mathbb{R}^{n+1}$, indicate a
parameterization of $\Si_0$ on the unit disk $\mathcal D$, namely if  $U=(u_1,\cdots,u_n)$ and $X=(x_1,\cdots,x_n)$ we have 
 $$S_0(U)=(X,  z)\in \R^{n+1}\,\,\,\hbox{ which maps }\,\,\,\partial \mathcal D\,\,\, \hbox{ onto}\,\,\, \partial \Si_0.$$ 

\noindent Let $\eta >0$ be sufficiently small. Let $T= (T_1,T_2,\cdots, T_{n+1})$ be a
smooth vector field transverse to $\Si_0$ which is parallel to the $x_{n+1}=0$
plane in a small neighborhood near $\partial \Si_0$.  Following \cite{CD}, we define the global change of
coordinates $\varPhi  : \mathcal D \times [-\eta,\eta] \to \mathbb{R}^{n+1}$ by
\begin{equation}\label{eqn-coc}
\left(
\begin{matrix}
 X  \\ z\end{matrix} \right) = \varPhi  \left(\begin{matrix} U \\  w\end{matrix}
 \right)= S (U) + w\, T(U).
\end{equation}

\smallskip

\begin{thm}\label{thmreg}
Assume that the initial surface $\Sigma_0$ satisfies the assumptions of
Theorem~\ref{thm-main2}. Then, there exists a time $\tau >0$ 
for which the solution $\Sigma_t$    of the  \eqref{eqn-qk}  flow is converted, via
the coordinate change \eqref{eqn-coc},    to a solution 
$w(\cdot, t)$ of the IVP \eqref{eqn-ivp2}  on $Q_\tau = \mathcal D \times
[0,\tau]$  with  $w_0 \in C^{2+\alpha}_{w,s}(\mathcal D)$ and
the operator $M$ satisfying all the hypotheses of Theorem  \ref{thm:exi3}. 

\end{thm}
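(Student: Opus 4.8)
\textbf{Proof proposal for Theorem~\ref{thmreg}.}
The plan is to carry out the global change of coordinates \eqref{eqn-coc} explicitly and verify that it converts the free-boundary problem \eqref{eqn-qk} into an initial value problem of the form \eqref{eqn-ivp2} whose operator $M$ meets the hypotheses of Theorem~\ref{thm:exi3}; the latter theorem then produces the desired solution $w$. First I would record that since $\Sigma_0 \in \mathfrak S$ with pressure function $g=\sqrt u$ smooth up to $\Gamma_0$ and satisfying condition~($\star$), the reference surface $\Si_0$ and the transverse vector field $T$ can be chosen so that $\varPhi$ is a diffeomorphism from a neighborhood of $\mathcal D \times \{0\}$ onto a neighborhood of $\Sigma_0^2$ in $\R^{n+1}$, with $\varPhi$ mapping $\partial\mathcal D \times [-\eta,\eta]$ into the hyperplane $\{x_{n+1}=0\}$ because $T$ is parallel to that plane near $\partial\Si_0$. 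The solution $\Sigma_t$ of \eqref{eqn-qk}, which exists and is $C^{1,1}$ by Theorem~\ref{thm-main1}, can then be written, for a short time, as the graph $(U,w(U,t))$ of a function $w$ over $\mathcal D$ via $\varPhi$; substituting \eqref{eqn-coc} into the flow equation and projecting onto $T$ gives $w_t = F(t,U,w,Dw,D^2w)$, i.e. $Mw=0$, where $F$ is obtained from $Q_k=S_k^n/S_{k-1}^n$ by expressing the principal curvatures of the graph in the $(U,w)$ coordinates. One must check that $w_0 := w(\cdot,0)$ lies in $C^{2+\alpha}_{w,s}(\mathcal D)$: away from $\partial\mathcal D$ this is just smoothness of the strictly convex part $\Sigma_0^2$, while near $\partial\mathcal D$ it is exactly the translation of the hypothesis that $g$ is $C^2$ up to $\Gamma_0$ and satisfies condition~($\star$), which (as in Section~\ref{local}) becomes condition~($\star\star$) on the inverted function $f$ and hence the statement $w_0\in C^{2+\alpha}_{w,s}$ in the weighted spaces of Section~\ref{def:ban}.

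The heart of the argument is the verification that the linearization $DM(\bar w)$, at points $\bar w$ within a $\mu$-ball of $w_0$ in $C^{2+\alpha}_{w,s}(Q)$, satisfies the hypotheses of Theorem~\ref{thm:exi2}: the interior coefficients are $C^\alpha$ and strictly elliptic, and after straightening $\partial\mathcal D$ near each boundary point the operator takes the degenerate form \eqref{eqn-hl} with $C^\alpha_{w,s}$ coefficients and $\{\tilde a_{ij}\}$ strictly elliptic. Strict ellipticity in the interior follows because on $\Sigma_t^2$, away from the flat side, the surface is strictly convex and $S_{k-1}^n>0$, so $Q_k$ is a concave, strictly increasing function of the curvatures and $DM$ inherits positive definiteness from $\partial Q_k/\partial h^i_j$. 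Near the interface the degeneracy of order $z^2$ in the $f_{11}$ coefficient of \eqref{eqn-hl}, and order $z$ in the mixed terms, is read off from the factors of $u=g^2$ that appear when the second fundamental form of the graph $x_{n+1}=u$ is written in terms of $g$; condition~($\star\star$) is precisely what guarantees the surviving coefficients $\tilde a_{ij}$, $\tilde b_i$, $\tilde c$ are bounded, belong to the weighted Hölder class, and that $\{\tilde a_{ij}\}$ stays uniformly elliptic. These are the computations carried out in detail in Section~\ref{sec-th2}; following \cite{CD} I would only indicate the structure and defer the bookkeeping.

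Having established that $M$ and $w_0$ satisfy the hypotheses of Theorem~\ref{thm:exi3}, that theorem directly yields a time $\tau_0>0$ and a solution $w\in C^{2+\alpha}_{w,s}(\mathcal D\times[0,\tau_0])$ of \eqref{eqn-ivp2} with the stated norm bound; taking $\tau=\tau_0$ finishes the proof. The remaining point is to check that this $w$ really does reparametrize the \emph{viscosity} solution $\Sigma_t$ of Theorem~\ref{thm-main1} and not merely \emph{a} solution: this follows from the uniqueness statement in Proposition~\ref{prop-uniqueness}, since the graph $(U,w(U,t))$ patched with the flat side produces a $C^{1,1}$ family of convex hypersurfaces solving \eqref{eqn-qk} classically on the strictly convex part, hence (by a barrier/comparison argument as in Definition~\ref{def-visc}) a viscosity solution with the same initial data, which must coincide with $\Sigma_t$.

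The main obstacle I expect is the boundary analysis near $\partial\mathcal D$: precisely matching the powers of $z$ that appear when the fully nonlinear quotient $Q_k$ is expanded in the coordinates \eqref{eqn-coc}, showing that the resulting linearized operator lands in the model class \eqref{eqn-hl} with coefficients in the correct weighted Hölder spaces, and confirming that condition~($\star\star$) (equivalently condition~($\star$) on $g$) is both necessary and sufficient for the uniform ellipticity of $\{\tilde a_{ij}\}$. This is where the $k\ge 2$ hypothesis is used — one needs $S_{k-1}^n>0$ structurally so that the quotient is well-behaved as the curvatures on the flat side degenerate — and it is the step whose routine but lengthy verification is relegated to Section~\ref{sec-th2} in the spirit of \cite{CD}.
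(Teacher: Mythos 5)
Your proposal follows the paper's own strategy exactly: set up the global diffeomorphism \eqref{eqn-coc}, observe that the resulting operator $M$ is strictly elliptic in the interior of $\mathcal D$ by strict convexity of $\Sigma_t^2$, and reduce the main task to verifying that the linearization near $\partial\mathcal D$ lands in the degenerate model class \eqref{eqn-hl} with weighted H\"older coefficients and uniformly elliptic $\{\tilde a_{ij}\}$ — which the paper, like you, defers to the boundary computation (carried out locally in Proposition~\ref{lem-lin-eq} of Section~\ref{sec-appendix}, not Section~\ref{sec-th2}) and to the detailed two-dimensional treatment in \cite{CD}. The extra steps you add at the end — actually invoking Theorem~\ref{thm:exi3} to produce $w$, and then appealing to Proposition~\ref{prop-uniqueness} to identify the resulting graph with the unique viscosity solution — are really the content of the subsequent Theorems~\ref{thm:exis4} and \ref{thm:exis5}; Theorem~\ref{thmreg} itself asserts only the structural conversion, so this is a harmless over-reach rather than a gap.
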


\begin{proof} The proof of this result follows along the lines of the proof of Theorem 6 
in \cite{CD}. It is clear that the coordinate change \eqref{eqn-coc} transforms the free-boundary
problem  \eqref{eqn-qk} into a   problem of the form \eqref{eqn-ivp2} where the fully-nonlinear
operator $M$ is strictly elliptic away from the lateral boundary $\partial \D \times [0,\tau]$.
The main difficulty here is to show that the linearization of $M$ at a point $\bar w$ close to
the initial data $w_0$ is a degenerate operator of the the form \eqref{eqn-ball} which 
near the boundary lateral boundary $\partial \D \times [0,\tau]$ satisfies the assumptions of
Theorem \ref{thm:exi2}. In the two-dimensional case, this is done in detail in \cite{CD} (Theorem 6).
In Section \ref{sec-appendix} we will show that the linearization of equation \eqref{eqn-ff}, which 
is obtained from \eqref{eqn-qk} via a local change of coordinates that fixes the free-boundary, 
is of the desired from. The computations for the global change of coordinates are more involved
by similar.

\end{proof}

As an immediate consequence of Theorems \ref{thm:exi2} and   \ref{thm:exi3} we obtain
the following existence result for \eqref{eqn-qk}.

\begin{thm}\label{thm:exis4}
Assume that the initial surface $\Sigma_0$ satisfies the assumptions of
Theorem~\ref{thm-main2}. Then, there exists a time $\tau >0$ 
for which there exists a  solution $\Sigma_t$  of \eqref{eqn-qk} on $[0,\tau]$ which
is of the class $C^{2+\alpha}_{w,s}$. 
\end{thm}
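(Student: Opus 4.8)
The plan is to deduce Theorem~\ref{thm:exis4} as a direct corollary of Theorems~\ref{thm:exi3} and~\ref{thmreg}, so the proof is essentially an assembly argument rather than a new computation. First I would invoke Theorem~\ref{thmreg}: since the initial surface $\Sigma_0$ satisfies the hypotheses of Theorem~\ref{thm-main2}, there is a time $\tau>0$ such that the global coordinate change \eqref{eqn-coc} converts the \eqref{eqn-qk} flow into the initial value problem \eqref{eqn-ivp2} on $Q_\tau=\mathcal D\times[0,\tau]$, with initial datum $w_0\in C^{2+\alpha}_{w,s}(\mathcal D)$ and fully-nonlinear operator $M$ satisfying all the hypotheses of Theorem~\ref{thm:exi3}. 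In particular the linearization $DM(\bar w)$ at points $\bar w$ near $w_0$ is a degenerate operator of the form \eqref{eqn-ball} meeting the assumptions of Theorem~\ref{thm:exi2}, with $\{\tilde a_{ij}\}$ strictly elliptic and the coefficients in the appropriate weighted H\"older classes.

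Next I would apply Theorem~\ref{thm:exi3} to this operator $M$ and this datum $w_0$: it produces a number $\tau_0$ in $0<\tau_0\leq\tau$ and a solution $w\in C^{2+\alpha}_{w,s}(\mathcal D\times[0,\tau_0])$ of \eqref{eqn-ivp2}, together with the a priori bound $\|w\|_{C^{2+\alpha}_{w,s}(\mathcal D\times[0,\tau_0])}\leq C\,\|w_0\|_{C^{2+\alpha}_{w,s}(\mathcal D)}$. Shrinking $\tau$ to $\tau_0$ if necessary, I would then read this solution backwards through the coordinate change \eqref{eqn-coc}: the surface $\Sigma_t=\varPhi(\mathcal D\times\{w(\cdot,t)\})$ (glued, as in \cite{CD}, to the evolving flat piece $\Sigma^1_t$ across the interface $\Gamma_t$) is a solution of \eqref{eqn-qk} on $[0,\tau]$, and since $w\in C^{2+\alpha}_{w,s}$ and $\varPhi$ is smooth and transverse to $\Si_0$, the resulting surface $\Sigma_t$ is of class $C^{2+\alpha}_{w,s}$ in the sense defined in Section~\ref{def:ban}. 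One should also note that by the uniqueness part of Theorem~\ref{thm-main1} this $\Sigma_t$ coincides with the viscosity solution, so the statement is consistent with the $C^{1,1}$ theory of Part~I.

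The only genuine subtlety — and the step I expect to be the main obstacle — is verifying that the backward substitution really yields a \emph{bona fide} solution of \eqref{eqn-qk} near the free boundary, i.e. that the parametrized surface closes up correctly along $\Gamma_t$, that the interface is exactly where $w=0$ (equivalently $g=\sqrt u=0$), and that the nondegeneracy condition~($\star\star$), hence~($\star$), propagates for $t\in[0,\tau)$; this is where one uses that $w_0$ lies in the weighted space and that the weighted norm stays controlled by the estimate from Theorem~\ref{thm:exi3}. All of this is carried out in the two-dimensional case in \cite{CD}, and since the operator $M$ here has been arranged (via Theorem~\ref{thmreg}) to have exactly the same degenerate structure modeled on the singular metric \eqref{eqn-metric}, the argument transfers verbatim. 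Accordingly I would keep the proof short, stating that it is an immediate consequence of Theorems~\ref{thm:exi2} and~\ref{thm:exi3} together with Theorem~\ref{thmreg}, and referring the reader to \cite{CD} for the details of the free-boundary bookkeeping.
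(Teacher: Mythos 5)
Your proposal is correct and follows essentially the same route as the paper, which states Theorem~\ref{thm:exis4} as an immediate consequence of Theorems~\ref{thm:exi2} and~\ref{thm:exi3} (with Theorem~\ref{thmreg} supplying the bridge from the geometric flow to the IVP on the disc). Your write-up is in fact slightly more careful than the paper's one-line justification in that it explicitly cites Theorem~\ref{thmreg} and flags the back-substitution across the free boundary as the point where the argument relies on the \cite{CD} bookkeeping.
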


Once a $C^{2+\alpha}_{w,s}$ solution of \eqref{eqn-qk} is established,  one can argue
similarly as in \cite{CD} that the pressure $g(\cdot,t)=\sqrt{u(\cdot,t)}$ of the solution $\Sigma_t$  (as defined in the introduction)  is  $C^\infty$ smooth up to the interface. This concludes the regularity
part of Theorem \ref{thm-main2},  which  we state next.

\begin{thm}\label{thm:exis5} Assume that the initial surface $\Sigma_0$ satisfies the assumptions of
Theorem~\ref{thm-main2}.  Let $\Sigma_t$ be the unique 
viscosity solution of \eqref{eqn-qk} with initial data $\Sigma_0$ (its existence follows from  Theorem \ref{thm-main1}).  
Then,
there exists a time $\tau>0$  such that  the pressure function $g(\cdot,t)=\sqrt{u(\cdot,t)}$  is smooth up to the interface $x_{n+1}=0$ and satisfies condition~($\star$).
\end{thm}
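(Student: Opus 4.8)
The plan is to combine the $C^{2+\alpha}_{w,s}$ existence result already established (Theorem \ref{thm:exis4}) with a bootstrap argument for higher regularity of the pressure, exactly paralleling \cite{CD}, and then to identify this regular solution with the viscosity solution produced by Theorem \ref{thm-main1} so that condition~($\star$) is transported from $t=0$. First I would invoke Theorem \ref{thm:exis4} to obtain, for some $\tau>0$, a solution $\Sigma_t$ of \eqref{eqn-qk} on $[0,\tau]$ which, in the global coordinates of \eqref{eqn-coc}, corresponds to a solution $w \in C^{2+\alpha}_{w,s}(Q_\tau)$ of \eqref{eqn-ivp2}. Translating back, this means the function $f$ (or, equivalently near the interface, the pressure $g=\sqrt u$) lies in the weighted space $C^{2+\alpha}_{w,\bar s}$ up to the flat side $z=0$; in particular $g$ and its relevant first and second derivatives extend continuously up to $\Gamma_t$.

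The next step is the smoothness bootstrap. Having a $C^{2+\alpha}_{w,s}$ solution, the equation \eqref{eqn-ff} (resp.\ its global analogue) becomes, after freezing coefficients, a \emph{linear} degenerate equation of the model form \eqref{eqn-hl} whose coefficients are themselves $C^{\alpha}_{w,s}$; differentiating the equation tangentially and in $z\, \partial_z$ and applying the Schauder theory of Theorem \ref{thm:exi2} to the differentiated equations, one gains one weighted derivative at a time. Iterating — again exactly as in \cite{CD} — yields $w \in C^\infty_{w,s}$, hence $g(\cdot,t)=\sqrt{u(\cdot,t)}$ is $C^\infty$ up to the interface $x_{n+1}=0$ for $t\in[0,\tau)$. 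I would only sketch this, since it is word-for-word the argument in \cite{CD} with $n-1$ tangential directions in place of one.

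It then remains to (i) check that the smooth solution just constructed is the viscosity solution of Theorem \ref{thm-main1}, and (ii) propagate condition~($\star$). For (i): the $C^{2+\alpha}_{w,s}$ solution is in particular a $C^{1,1}$ convex hypersurface solving \eqref{eqn-qk} classically on the strictly convex part and touching the flat side; by the barrier/comparison characterization in Definition \ref{def-visc} it is a viscosity solution, and uniqueness (Proposition \ref{prop-uniqueness}) forces it to coincide with the viscosity solution of the theorem. For (ii): condition~($\star$) — namely $|Dg|\ge\lambda$ and $(g_{ij})\ge\lambda$ on $\Gamma_0$ — is an open condition on the $C^2$-up-to-the-interface data; since $g(\cdot,t)$ depends continuously (indeed $C^{2+\alpha}_{w,s}$-continuously) on $t$ up to $\Gamma_t$, after possibly shrinking $\tau$ the inequalities persist with, say, $\lambda/2$ in place of $\lambda$ for all $t\in[0,\tau)$. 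Together with the nondegeneracy $|Dg|\ge\lambda/2$, the implicit function theorem shows $\Gamma_t=\{g(\cdot,t)=0\}$ is a smooth hypersurface moving by the induced flow, which completes the statement.

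The main obstacle is not in this theorem per se but is inherited: it is the verification — deferred to Section \ref{sec-appendix} / \ref{global} — that the linearization of the fully nonlinear operator $M$ near $w_0$ really is of the degenerate form \eqref{eqn-ball}/\eqref{eqn-hl} with coefficients in the right weighted Hölder classes and with $\{\tilde a_{ij}\}$ strictly elliptic; this is precisely where condition~($\star\star$) (equivalently ($\star$)) is used, since it is exactly the nondegeneracy that prevents the tangential part of the symbol from collapsing at $z=0$. Given that structural fact, everything above is a routine transcription of \cite{CD} to higher dimensions.
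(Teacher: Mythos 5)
Your proposal is correct and follows essentially the same route as the paper: invoke Theorem~\ref{thm:exis4} for the $C^{2+\alpha}_{w,s}$ solution via the inverse function theorem, bootstrap to $C^\infty$ by repeated differentiation and the Schauder estimates of Theorem~\ref{thm:exi2} exactly as in \cite{CD}, and propagate condition~($\star$) by continuity after shrinking $\tau$. The paper states this almost entirely by reference and leaves the identification with the unique viscosity solution of Theorem~\ref{thm-main1} (via Proposition~\ref{prop-uniqueness}) implicit; your proposal makes these steps explicit but does not depart from the intended argument.
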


\subsection{Proof of Theorem \ref{thm-main2}}\label{sec-th2}

The proof of  Theorem \ref{thm-main2} readily follows from Theorem \ref{thm:exis5} and 
 the following proposition that refers to the evolution of the boundary of the flat side.

\begin{prop} \label{prop-natasa} Under the assumption that $\Sigma_t$ belongs to the 
class $\mathfrak S$ (as defined in Definition  \ref{defn-surface})  and   that the non-degeneracy 
condition~($\star$) holds on  $0 \leq t \leq \tau$, then the boundary
of the flat side $\Gamma_t$ is an {\em ($n-1$)}-dimensional surface which  evolves by the {\em ($*_{k-1}$)} flow.
\end{prop}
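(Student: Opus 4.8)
The plan is to analyze the degenerate fully-nonlinear equation \eqref{eqn-ff} satisfied by the height-graph-inverse function $f$ near the interface, and to read off the interface motion from the limit of this equation as $z \to 0$. First I would recall that, by Theorem~\ref{thm:exis5}, the pressure $g(\cdot,t) = \sqrt{u(\cdot,t)}$ is smooth up to the interface and satisfies condition~($\star$); equivalently, in the local coordinates of Section~\ref{local}, the function $f(z,x_2,\dots,x_n,t)$ solving $z = u$ for $x_1$ is such that $f$ is smooth up to $z=0$ and the non-degeneracy condition~($\star\star$) holds in a neighborhood of $z=0$. The interface $\Gamma_t$ is then precisely the slice $\{z=0\}$ in these coordinates, i.e.\ it is given by $x_1 = f(0,x_2,\dots,x_n,t)$, which is an $(n-1)$-dimensional hypersurface since $f^\circ := f(0,\cdot,t)$ is smooth. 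Its normal speed is controlled by $f_t(0,\bar x,t) = f^\circ_t$.

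Next I would compute the limit as $z\to 0$ of the right-hand side $-S_k^n([b_{ij}])/S_{k-1}^n([b_{ij}])$ of \eqref{eqn-ff}. The entries $b_{ij}$ are built from $f$ and its first and second derivatives; the key point (already encoded in the matrix appearing in condition~($\star\star$)) is that as $z\to 0$ the ``$1$-direction'' row and column of $[b_{ij}]$ carry weights $z^{3/2}$ and $z^{3/4}$ respectively, so that after factoring, the leading block of $[b_{ij}]$ degenerates to a matrix whose $(1,1)$ entry behaves like a constant multiple of $-z^{3/2} f_{11}$ while the tangential block behaves like $-F_{ij}$ (the Hessian of $f$ in the tangential directions) plus lower-order terms. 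Using the expansion of elementary symmetric functions $S_k^n$, $S_{k-1}^n$ under this block/weight structure — specifically that one elementary symmetric degree is ``used up'' by the degenerate $1$-direction eigenvalue and the quotient picks up the factor from the $(1,1)$-weighted entry together with $S_{k-1}$ / $S_{k-2}$ of the tangential block — I would show
$$
\lim_{z\to 0}\ -\frac{S_k^n([b_{ij}])}{S_{k-1}^n([b_{ij}])}\ =\ -\frac{S_{k-1}^{n-1}(\tilde\lambda)}{S_{k-2}^{n-1}(\tilde\lambda)}\ =\ -Q_{k-1}(\tilde\lambda),
$$
where $\tilde\lambda = (\tilde\lambda_1,\dots,\tilde\lambda_{n-1})$ are the principal curvatures of the hypersurface $\Gamma_t$ (computed from $F_{ij}$, i.e.\ from the tangential Hessian of $f$, with the appropriate metric normalization). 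Passing to the limit $z\to 0$ in \eqref{eqn-ff} then gives $f^\circ_t = -Q_{k-1}(\tilde\lambda)$, which is exactly the statement that $\Gamma_t = \{x_1 = f^\circ(\cdot,t)\}$ moves by the $(*_{k-1})$ flow. One should also check that this limiting equation on $\Gamma_t$ is strictly parabolic: this is where condition~($\star\star$) enters, since it guarantees $-F_{ij} \ge \bar\lambda I$, hence $\tilde\lambda_i \ge c > 0$ on $\Gamma_t$, so $S_{k-2}^{n-1}(\tilde\lambda) > 0$ and $Q_{k-1}$ is a bona fide smooth concave speed there.

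Finally I would verify consistency with the global picture: the local computation near an arbitrary interface point $P_0 \in \Gamma_{t_0}$ is coordinate-independent up to the rotation normalizing the tangent hyperplane to the level sets of $g$ (this is how $\tau_1,\dots,\tau_{n-1}$ and the $F_{ij}$ block were defined in the introduction), so the limiting motion law $f^\circ_t = -Q_{k-1}(\tilde\lambda)$ patches together to a well-defined geometric flow of $\Gamma_t$ as an embedded $(n-1)$-dimensional hypersurface in the hyperplane $\{x_{n+1}=0\}$, for $0 \le t \le \tau$.

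\textbf{Main obstacle.} The delicate step is the asymptotic analysis of $S_k^n([b_{ij}])/S_{k-1}^n([b_{ij}])$ as $z \to 0$: one must track precisely which powers of $z$ appear in which entries of $b_{ij}$, confirm that after dividing numerator and denominator by the common degenerate factor the limit is finite and nonzero, and identify the surviving combination of tangential curvatures with $Q_{k-1}$ of $\Gamma_t$ in the correct geometric normalization. This is essentially the $n$-dimensional generalization of the corresponding computation in \cite{CD} for the harmonic mean curvature flow ($n=2$, where $Q_k = Q_2$ and the interface is a curve moving by $Q_1 = $ curvature), and I would carry it out by the same method — writing out $[b_{ij}]$ explicitly from the local change of coordinates of Section~\ref{local}, performing the block factorization, and using the multilinearity of $S_j^n$ — referring the reader to \cite{CD} for the parallel lower-dimensional details.
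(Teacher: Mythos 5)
Your overall plan matches the paper's: pass to the local coordinates $x_1 = f(z,x_2,\dots,x_n,t)$, study the degenerate equation $f_t = -S_k^n([b_{ij}])/S_{k-1}^n([b_{ij}])$ as $z\to 0$, and identify the limit with $-Q_{k-1}$ of the interface curvatures. The final conclusion you state is correct. However, the description of the mechanism contains a genuine error that would derail the computation if carried out as written.

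You claim that the $1$-direction row and column of $[b_{ij}]$ ``carry weights $z^{3/2}$ and $z^{3/4}$'' and that the $(1,1)$ entry ``behaves like a constant multiple of $-z^{3/2}f_{11}$,'' which, since $z^{3/2}f_{zz}$ is bounded by the regularity assumptions, would make $b_{11}$ bounded as $z\to 0$. In fact the opposite is true: using $\sqrt{z}f_z \to a(0) > 0$ and $z^{3/2}f_{zz} \to -v(0) < 0$, the paper computes $b_{11} = -f_{zz}/f_z^2 + o(1) = c_1(z,\bar x,t)/\sqrt{z}$ with $c_1 \geq \delta > 0$, so $b_{11} \to \infty$. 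You appear to have conflated the matrix appearing in the non-degeneracy condition~($\star\star$) (which is normalized to be of order $1$) with the coefficient matrix $[b_{ij}]$ of the PDE (which is not). This sign of the degeneracy matters: if the $1$-direction eigenvalue $\lambda_1$ were bounded or tending to zero, the limit of $S_k^n/S_{k-1}^n$ would be $Q_k$ of the tangential block, not $Q_{k-1}$. The drop in index happens precisely \emph{because} $\lambda_1 \to \infty$, since then
$$
\frac{S_k^n(\lambda_1,\dots,\lambda_n)}{S_{k-1}^n(\lambda_1,\dots,\lambda_n)}
=\frac{\lambda_1\,S_{k-1}^{n-1}(\lambda_2,\dots,\lambda_n)+S_k^{n-1}(\lambda_2,\dots,\lambda_n)}{\lambda_1\,S_{k-2}^{n-1}(\lambda_2,\dots,\lambda_n)+S_{k-1}^{n-1}(\lambda_2,\dots,\lambda_n)}
\longrightarrow \frac{S_{k-1}^{n-1}}{S_{k-2}^{n-1}},
$$
which is exactly the L'Hospital argument the paper uses (differentiating numerator and denominator with respect to $\lambda_1$). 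Your own phrase ``one elementary symmetric degree is used up by the degenerate direction'' implicitly presumes that this eigenvalue dominates, which contradicts your stated asymptotics.

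Beyond this, your proposal leaves as a single sentence the nontrivial identification of the bounded eigenvalues $\lambda_2,\dots,\lambda_n$ of $[b_{ij}]$ with the second-order tangential quantities $f_{ii}(0,\bar x,t)$ and, in turn, with the principal curvatures of $\Gamma_t$. In the paper this requires two separate arguments (an eigenvector argument plus the observation that $\nabla f(P_0)=0$ so the interface curvatures are the eigenvalues of the tangential Hessian), and there is also a uniform lower bound on all $\lambda_i$ that must be established to ensure $S_{k-2}^{n-1}>0$. These steps are the substance of the proof and should not be subsumed under ``the correct geometric normalization.''
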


\begin{proof}
Let $P_0 \in \Gamma_t$ be a point on the boundary of the flat side.  The strictly
convex part of the surface $\Sigma^2_t$  can be expressed locally around $P_0$ 
as the graph of $z = u(x_1,\dots,x_n,t)$, where we may assume that  the coordinates are
chosen so that $x_2,\dots,x_n$ are the tangential directions to the flat side at 
$P_0$. We consider, as before, the pressure function  $g = \sqrt{u}$ and we solve the equation $z = u(x_1,\dots,x_n,t)$
with respect to $x_1$. Then $x_1 = f(z,x_2,\dots,x_n,t)$.

We observe that because of the non-degeneracy condition~($\star$), we have 
\begin{equation}
f_{x_i}(P_0) = 0, \qquad  \forall \,\,\,   i \geq 2.
\end{equation}
Indeed,  along the tangential directions $x_i$  ($i \geq 2$)  to $\Gamma_t$ at $P_0$,     we have 
$$f_{x_i} = -\frac{u_{x_i}}{u_{x_1}} = - \frac{g\, g_{x_i}}{g\, g_{x_1}} = -\frac{g_{x_i}}{g_{x_1}}=0,
\qquad \mbox{at} \,\,\, P_0$$
because  $g_{x_i}(P_0)=0$, for $i \geq 2$ while   $g_{x_1}(P_0) > 0$ due to  ($\star$). 

The assumptions   that $\Sigma_t \in \mathfrak S$ and that the non-degeneracy
condition holds, imply  in particular that  $f \in C^{2+\alpha, \frac 12}$ near $P_0$.
Hence, by  Definition \ref{def-space}  the functions 
$\frac{1}{\sqrt{z}}\, f_{_i}$ extend continuously  up to the boundary 
$z = 0$, when $i \geq 2$, which means that
\begin{equation}\label{claim-first-der}
|f_{x_i}| \leq C\, \sqrt{z},  \qquad  \mbox{as } \,\,\,   z \to P_0, \quad  \forall \,\,\,  i \geq 2.\end{equation}

Our surface is locally expressed as a graph  $z=u(x_1,\dots,x_n,t)$
and its principal curvatures are the eigenvalues of the symmetric
matrix $[a_{ij}]$, where
\begin{equation}
\label{eq-aij}
a_{ij} = \frac{1}{v}\, \left ( D_{ij}u - \frac{D_iuD_luD_{jl}u}{v(1+v)} - 
\frac{D_juD_luD_{il}u}{v(1+v)} + \frac{D_iuD_juD_kuD_luD_{kl}u}{v^2(1+v)^2}\right )
\end{equation}
and $v = \sqrt{1+|Du|^2}$. The equation (\ref{eqn-qk}) can be
expressed as
$$u_t = \frac{S_k^n([c_{ij}])}{S_{k-1}^n([c_{ij}])}, \qquad \mbox{with} \quad c_{ij} = a_{ij}\, v.$$
Moreover,  since $f_t =-{u_t}/{f_z}$, 
the equation (\ref{eqn-qk}) can be written in terms of $f$ as
\begin{equation}\label{eq-f}
f_t = -\frac{S_k^n([b_{ij}])}{S_{k-1}^n([b_{ij}])}, \qquad \mbox{with} \quad b_{ij} = c_{ij}\, f_z
\end{equation}
where $b_{ij}$ can be expressed in terms of $f$ and
its first and second derivatives. 

To express the evolution equation in
terms of $f$ we use the identities
\begin{equation*}
\label{eq-id1}
u_{x_1} = \frac{1}{f_z}, \qquad  u_{x_i} = -\frac{f_{x_i}}{f_z},   \qquad 
u_t = -\frac{f_t}{f_z}
\end{equation*}
and
\begin{equation*}
\label{eq-id2}
u_{x_1x_1} = -\frac{f_{zz}}{f_z^3}, \qquad  u_{x_1x_i} = -\frac{1}{f_z} \left (-\frac{f_{x_i}}{f_z^2}f_{zz}
+ \frac{1}{f_z}f_{zx_i} \right )
\end{equation*}
and
\begin{equation*}
\label{eq-id3}
u_{x_ix_j} = -\frac{1}{f_z} \left (\frac{f_{x_i}f_{x_j}}{f_z^2}f_{zz} - \frac{f_{x_i}}{f_z}f_{zx_i}
- \frac{f_{x_j}}{f_z}f_{zx_j} + f_{x_ix_j}\right )
\end{equation*}
where $i, j > 1$.

Keeping in mind Definition \ref{def-space} and that we are interested
in a behaviour of $[b_{ij}]$ around $z = 0$, using (\ref{eq-aij}) and the above formulas 
we can compute that, as $z \to 0$, we have 
$$
b_{11} = -\frac{f_{zz}}{f_z^2} + o(1), 
\qquad b_{1i}=b_{i1}= -\frac{f_{zx_i}}{f_z}+\frac{f_{x_i}f_{zz}}{f_z^2} + o(1)
$$
and
$$
b_{ij} = {-f_{x_ix_j}+\frac{f_{x_i}f_{zx_j}}{f_z}+\frac{f_{x_j}f_{zx_i}}{f_z}-\frac{f_{x_i}f_{x_j}f_{zz}}{f_z^2} + o(1)}$$
for $i,j \geq 2$. 

To simplify the notation we set from now on $\bar x=(x_2,\dots,x_n)$.  By  Definition \ref{def-space} we can see that the behaviour of $[b_{ij}]$, as $z \to 0$,  is 
\begin{equation*}
b_{11} = \frac{c_1(z,\bar x,t)}{\sqrt{z}}, \quad b_{1i}=b_{i1}=c_i(z,\bar x, t), \quad 
b_{ij} = - f_{ij} + c_{ij}(z,\bar x, t), \quad  i,j \geq 2
\end{equation*}
with 
$$|c_i(z,\bar x,t)| \leq C \qquad \mbox{and} \quad \lim_{z\to 0}\, c_{ij}(z,\bar x,t) = 0$$
and in addition, by the non-degeneracy condition ($\star\star$),  
$$0 < \delta  \le c_1(z,\bar x, t) \le C.$$

We may assume that the  coordinates $\bar x:= (x_2, \dots, x_n)$ are chosen so that the matrix 
$[f_{ij}(P_0)]_{i,j \geq 2}$
is diagonal at our chosen boundary $P_0$. Then, it follows from the above discussion
that 
\begin{equation}\label{eqn-fff}
b_{ii} = -f_{ii} + o(1), \qquad (b_{ij})_{i \neq j}  = o(1),\qquad  i,j \geq 2.
\end{equation}
The  non-degeneracy condition ($\star\star$) and our regularity assumption on $f$  imply  that 
\begin{equation}\label{eqn-ff2}
0 < \lambda \leq  b_{ii} \leq C < \infty, \qquad (b_{ij})_{i \neq j}  = o(1),\qquad  i,j \geq 2.
\end{equation}
It follows that the  eigenvalues of $[b_{ij}]$ are computed as the roots of a   polynomial in
$\lambda$ which is of the form 
\begin{equation}
\label{eq-eigenvalue}
\det([b_{ij}]-\lambda\,  I) = \left ( \frac{c_1}{\sqrt{z}}\, \Pi_{j=2}^n (b_{jj}-\lambda)  - 
\sum_{i=2}^n c_i^2\, \Pi_{j\neq i, j\geq 2}(b_{jj}-\lambda) \right )  (1 + o(1)).
\end{equation}
Denote the eigenvalues of the matrix $[b_{ij}]$  by 
$$\lambda_1(z,\bar x, t) \ge \dots \ge \lambda_n(z,\bar x, t).$$

\begin{claim}
\label{claim-eigen-beh}
There are uniform constants $\mu > 0$ and $\nu < \infty$ so that 
\begin{equation}\label{eqn-lala}
 \mu \leq \frac {\lambda_1 (z,\bar x, t) }{\sqrt{z}},  \, \lambda_i (z,\bar x, t) \leq \nu, \qquad i \geq 2.
 \end{equation}
\end{claim}

\begin{proof}
We will first show there is a uniform constant $\mu > 0$ so that
\begin{equation}
\label{eq-low-bound}
\lambda_i \ge \mu, \qquad  \mbox{for all} \,\,\, i\ge 1.
\end{equation}
Assume, by contradiction, that   there is a $\lambda_k$
so that $\lambda_k(z,\bar x, t)\to 0$,  as $z \to 0$.  Since,  $\det([b_{ij}]-\lambda_k\,  I)=0$,  by applying \eqref{eq-eigenvalue} with  $\lambda=\lambda_k$ and using   \eqref{eqn-ff2}  we  obtain
\begin{equation}
\label{eq-id-zero}
\lim_{z \to 0} \left (  \frac{c_1(z,\bar x, t)}{\sqrt{z}}\, \Pi_{i=2}^n \, b_{ii}(z,\bar x, t) - 
\sum_{i=2}^nc_i(z,\bar x, t)^2\,  \Pi_{j\neq i,j\geq 2}\, b_{jj}(z,\bar x, t) \right )=0
\end{equation}
which is impossible by \eqref{eqn-ff2} and the fact that all $b_{ii}$, $c_i$ are bounded and $c_1 \geq \delta >0$. 
Also, note that
$$\Pi_{i=1}^n\lambda_i = \det([b_{ij}])  = \left ( \frac{c_1}{\sqrt{z}}\, \Pi_{j > 1} b_{jj}
- \sum_{i>1}c_i^2 \, \Pi_{i\neq j, j \geq 2} b_{jj} \right )(1+ o(1))$$
and 
$$\sum_{i=1}^n\lambda_i= \frac{c_1}{\sqrt{z}} + b_{22} + \dots + b_{nn} + o(1)$$
are both of the order $z^{-\frac{1}{2}}$ as $z\to 0$. Hence by  (\ref{eq-low-bound}) and \eqref{eqn-ff2}
we conclude the bounds \eqref{eqn-lala} finishing the proof of our claim. 
\end{proof}

\begin{claim}
\label{claim-fiilambda}
After a possible index re-arragement we have
$$\lim_{z \to 0} \, ( f_{ii}(z,\bar x,t) - \lambda_i(z,\bar x,t) ) =0, \qquad \forall i \geq 2.  $$
\end{claim}

\begin{proof}
Let  $\lambda_k$, $k\ge 2$ be any eigenvalue. Then, using \eqref{eq-eigenvalue}  we have  
$$\lim_{z \to 0} \left ( \frac{c_1}{\sqrt{z}}\, \Pi_{j\ge 2}(b_{jj}-\lambda_k) - \sum_{i\ge 2} c_i^2\, \Pi_{j\neq i, j\ge 2}(b_{jj}-\lambda_k) \right ) =0$$
and since the second term  is bounded, we conclude using also \eqref{eqn-ff2} and \eqref{eqn-fff} that 
\begin{equation}\label{eqn-lalala}
\lim_{z \to 0} \, ( f_{ii}(z,\bar x,t) - \lambda_k(z,\bar x,t) ) =0
\end{equation}
for some $i \geq 2$. 

It remains to show that, vise versa, for every $i \geq 2$, there exists an eigenvalue $\lambda_k$,
$k \geq 2$,  so that \eqref{eqn-lalala} holds.  We will argue by contradiction. 
Assume that there exists an $i \geq 2$ so that  \eqref{eqn-lalala} fails to hold for all $\lambda_k$,
$k \geq 2$. Without loos of generality, lets assume that $i=n$, so that 
$$\lim_{z\to 0}(f_{nn}(z,\bar x,t) - \lambda_k(z,\bar x,t)) \neq 0, \qquad \forall \, k \geq 2.$$

We will look at an  eigenvector $V:=(v_1,\cdots, v_n)$  corresponding
to any  $\lambda\in \{\lambda_2, \dots, \lambda_n\}$.  Using 
our previous computations on the behavior of the matrix $[b_{ij}]$, it follows that  the  coordinates of 
$V$ satisfy the following system of equations
\begin{eqnarray*}
& &\frac{c_1}{\sqrt{z}}\, v_1 + c_2\, v_2 + \dots + c_n\, v_n + o(1) = \lambda\, v_1 \\
& & c_j\, v_1 + f_{jj}\, v_j + o(1) = \lambda\,  v_j, \qquad \forall \,\,  j \ge 2.
\end{eqnarray*}
Since $\lambda$ and all the coefficients $c_i(z,\bar x,t)$ are bounded as $z\to 0$, the first equation implies
the $\lim_{z\to 0}v_1(z,\bar x,t) = 0$. The last equation gives that
$$c_n\, v_1 = (\lambda - f_{nn})\, v_n + o(1), \qquad \mbox{as} \,\,\, z \to 0$$ and since
the $\lim_{z\to 0} (f_{nn} - \lambda)  \neq 0$ it implies that the $\lim_{z\to 0}\, v_n(z,\bar x,t) = 0$.
We conclude that all the
eigenvectors corresponding to $\lambda_2, \dots, \lambda_n$ are of the form
$$V^i = (o(1),v_2^i,\dots, v_{n-1}^i, o(1)), \qquad \mbox{as} \,\,\, z \to 0.$$ 
We will argue that this is impossible because the $V_i's$ at the limit  $ z\to 0$ must span  the tangent plane  to the
surface $z=f(0,\bar x,t)$,  which is $(n-1)$-dimensional. 

To this end, we  consider the slices $x_1 = f(z,\bar x,t)$, when $z$ is
fixed, but close to zero.  Since $x_2, \dots, x_n$ are the tangential
directions to the flat side at $P_0$ and since the slices $x_1 =
f(z,\bar x,t)$ converge nicely to the flat side $x_1 =
f(0,\bar x,t)$ (from our regularity assumptions on $f$), it follows that  $x_2,\dots, x_n$  are almost tangential
directions to the slice $x_1 = f(z,x_2,\dots,x_n,t)$, when $z$ is
close to zero. Therefore the eigenvectors $V^2,\dots,V^n$ span an
$(n-1)$-dimensional plane that is almost tangent to the graph $x_1 =
f(z,x_2,\dots,x_n,t)$. Because of the nice convergence of the slices
to the flat side, those almost tangent planes converge to the tangent
plane to the interface $x_1=f(0,\bar x,t)$  at $P_0$. 

On the other hand, we observe that each $V^i$ converges, as
$z\to 0$,  to a vector of the form 
$$
\bar V^i = (0,\bar v_2^i,\dots,\bar v_{n-1}^i,0).
$$
The span of $\langle V^2,\dots,V^n\rangle$ converges to a span
of $\langle \bar V^2,\dots,\bar V^n \rangle$,  which
is at most $(n-2)$-dimensional and therefore it is impossible to  define the  tangent
plane to $x_1 = f(0,x_2,\dots,x_n,t)$. This finishes the proof of our claim.
\end{proof}

\begin{claim}
\label{claim-eigen-interface}
The principal curvatures   of the interface $x_1 = f(0,x_2,\dots,x_n,t)$
are given  by $$ \bar  \lambda_i = f_{ii}(0,x_2,\dots,x_n,t), \qquad  i\ge 2.$$
\end{claim}

\begin{proof}
Since the  interface is the graph of the function  $x_1=f(0,x_2,\dots,x_n,t)$,
its  principal curvatures  can be
computed by using formula (\ref{eq-aij}). Since  $\nabla f(P_0) = 0$,
the principal curvatures of the
interface are the eigenvalues of the matrix
$[f_{ij}]$. By our choice of coordinates at $P_0$
this matrix is diagonal, which proves the claim.
\end{proof}

We will now conclude the proof of Proposition \ref{prop-natasa}. Denote by $\bar \lambda_i(\bar x,t)$ the principal curvatures of the
interface $z=0$.  By Claims
\ref{claim-fiilambda}  and 
\ref{claim-eigen-interface}, we have 
\begin{equation*}
\label{eq-sets}
\bar \lambda_i (\bar x,t)  = \lim_{z\to 0}\lambda_i(z,\bar x,t), \qquad \forall i \geq 2.
\end{equation*}
Since $\lim_{z \to 0} \lambda_1(z,\bar x,t) = \infty$, by  L'Hospital's rule, we obtain
\begin{eqnarray*}
f_t(0,\bar x,t) &=& -  \lim_{z\to 0}\frac{S_k^n([b_{ij}]}{S_{k-1}^n([b_{ij}])} 
= - \lim_{z\to 0} \frac{\frac{\partial}{\partial\lambda_1}S_k^n([b_{ij}])}
{\frac{\partial}{\partial\lambda_1}S_{k-1}^n([b_{ij}])} \\
&=& -  \lim_{z\to 0}\frac{S_{k-1}^{n-1}(\lambda_2,\dots,\lambda_n)}{S_{k-2}^{n-1} (\lambda_2,\dots,\lambda_n)} = - \frac{S_{k-1}^{n-1}(\bar \lambda_2,\dots,\bar \lambda_n)}{S_{k-2}^{n-1}
(\bar \lambda_2,\dots,\bar \lambda_n)}
\end{eqnarray*} 

\noindent which shows that  the interface $\Gamma_t$ shrinks  by the \mbox{($*_{k-1}$)} flow.
\end{proof}

We finally remark that we have   actually shown   the following stronger result, where
we relax the regularity assumptions on the initial surface.

\begin{thm}\label{thm6} Assume that the initial surface $\Sigma_0$  belongs to the class  $C^{2+\alpha,\frac{1}{2}}_s$ and satisfies the 
non-degeneracy conditions~($\star\star$). Then, there exists a $\tau >0$ for which the  \eqref{eqn-qk} flow with initial data the surface $\Sigma_0$
admits a solution $\Sigma_t$ which is smooth up to the interface,
for $0<t\leq \tau$. In particular, the interface $\Gamma_t$ is a 
smooth hypersurface for every $0<t\leq \tau$ which moves by the \em{($*_{k-1}$)} flow.
\end{thm}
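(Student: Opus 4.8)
The plan is to repeat the argument establishing Theorem~\ref{thm-main2}, once one observes that the only properties of the initial surface used in that proof are precisely the two hypotheses assumed here: that the transformed initial datum lies in $C^{2+\alpha}_{w,s}(\mathcal{D})$ and that the non-degeneracy condition~($\star\star$) holds near the interface. First I would run the global change of coordinates \eqref{eqn-coc} of Section~\ref{global}: writing the strictly convex part of $\Sigma_t$ as a graph $z=u$, passing to $f$ (equivalently to the function $w$ on the unit ball $\mathcal{D}$), the free-boundary problem \eqref{eqn-qk} becomes the fixed-domain initial value problem \eqref{eqn-ivp2} for the fully-nonlinear operator $M$. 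The assumption $\Sigma_0\in C^{2+\alpha,\frac{1}{2}}_s$ is exactly what guarantees $w_0\in C^{2+\alpha}_{w,s}(\mathcal{D})$, so the starting datum lies in the correct Banach space, and the proof of Theorem~\ref{thmreg} carries over without change, since it never uses more than $w_0\in C^{2+\alpha}_{w,s}(\mathcal{D})$ together with~($\star\star$).

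Next I would verify that the linearization $DM(\bar w)$ at any $\bar w$ close to $w_0$ satisfies the hypotheses of Theorem~\ref{thm:exi2}: away from $\partial\mathcal{D}$ the operator is uniformly parabolic, while near the lateral boundary $\partial\mathcal{D}\times[0,\tau]$, after the boundary-flattening charts, it takes the degenerate form \eqref{eqn-hl} with coefficients in the appropriate weighted H\"older class and with the tangential block $\{\tilde a_{ij}\}$ strictly elliptic. The role of~($\star\star$) here is exactly to produce the uniform lower bound $c_1\ge\delta>0$ on the leading $z^2f_{11}$-coefficient together with the strict ellipticity $\lambda\le b_{ii}\le C$ of the tangential Hessian (compare \eqref{eqn-ff2} and the discussion preceding it), just as in the proof of Proposition~\ref{prop-natasa} and, in detail for $n=2$, in \cite{CD}. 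With these structural facts in hand, Theorem~\ref{thm:exi3} produces a solution $w\in C^{2+\alpha}_{w,s}(\mathcal{D}\times[0,\tau_0])$ for some $\tau_0>0$, hence a solution $\Sigma_t$ of \eqref{eqn-qk} of class $C^{2+\alpha}_{w,s}$ on a short time interval; this is the analogue of Theorem~\ref{thm:exis4} under the present weaker hypotheses.

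Finally, once a $C^{2+\alpha}_{w,s}$ solution is available, I would upgrade it to a solution which is $C^\infty$ up to the interface by differentiating the equation repeatedly in the tangential and time variables and in the hyperbolic variable $\xi=\log z$, and applying the linear estimate \eqref{ineq:www} of Theorem~\ref{thm:exi2} to each differentiated equation; this is the bootstrap argument of \cite{CD}, which I would only indicate (the analogue of Theorem~\ref{thm:exis5}). Shrinking $\tau$ if necessary, the non-degeneracy condition persists on $[0,\tau]$ by continuity, so that $\Sigma_t\in\mathfrak S$ with~($\star$) throughout, and Proposition~\ref{prop-natasa} then applies verbatim to conclude that $\Gamma_t$ is a smooth $(n-1)$-dimensional hypersurface evolving by the $(*_{k-1})$ flow. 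The main obstacle in all of this is the second step: checking that the linearization of the fully-nonlinear operator is genuinely modelled on \eqref{eqn-hl} near the boundary with the singular metric \eqref{eqn-metric}, i.e.\ that the degeneracy is exactly of the infinite-distance type handled by Theorem~\ref{thm:exi2}. This is the technical heart, done for $n=2$ in \cite{CD} and reduced, via the local change of coordinates, to the explicit computation of the coefficients $b_{ij}$ already carried out above.
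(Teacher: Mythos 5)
Your proposal is correct and follows essentially the same route the paper takes (implicitly, since the paper presents Theorem~\ref{thm6} as a remark following from the already-established chain of Theorems~\ref{thmreg}--\ref{thm:exis5} and Proposition~\ref{prop-natasa}). You correctly identify that the global change of coordinates, the linearization analysis via Theorem~\ref{thm:exi2}, the inverse-function-theorem argument of Theorem~\ref{thm:exi3}, and the bootstrap only ever use $w_0\in C^{2+\alpha}_{w,s}(\mathcal{D})$ together with~($\star\star$), so nothing in the machinery needs the stronger smoothness assumption of Theorem~\ref{thm-main2}, and the interface evolution then follows from Proposition~\ref{prop-natasa} once $\Sigma_t$ is smooth up to the interface for $t>0$.
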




\subsection{Appendix}\label{sec-appendix}

In this appendix we will justify why   the linearization of the equation \eqref{eq-f}
satisfied by $x_1=f(z,x_2,\cdots,x_n)$ 
is of the form (\ref{eqn-hl}). 

\begin{prop}
\label{lem-lin-eq}
The linearization of (\ref{eq-f}) around a point $\tilde f \in C^{2+\alpha}_{w,s}$
which satisfies the non-degeneracy condition ($\star\star$)  is of the form 
$$\tilde{f}_t =z^2\,  \tilde a_{11}  \tilde{f}_{11} + 2 \, z\, \tilde a_{1i}  \tilde{f}_{1i} + 
\tilde a_{ij} \tilde{f}_{ij} +z \tilde b_{1} \tilde{f}_{1} + \tilde b_i \,  \tilde{f}_{i} + \tilde c \, \tilde{f},\qquad 
 i,j \neq 1,$$
where $[\tilde{a}_{ij}]$ is a positive definite matrix.
\end{prop}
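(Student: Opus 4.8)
The plan is to linearize the fully-nonlinear equation \eqref{eq-f} directly, tracking only the degeneracy in the variable $z$ near the interface $\{z=0\}$, and to show that the principal part inherits exactly the weights $z^2, z, 1$ dictated by the singular metric \eqref{eqn-metric}. Recall that $f_t = -S_k^n([b_{ij}])/S_{k-1}^n([b_{ij}])$ with $b_{ij}=c_{ij}f_z$, and that each $b_{ij}$ is a smooth rational expression in $f_z, f_{zz}, f_{zx_i}, f_{x_ix_j}$ and lower-order terms in $f$. First I would compute the Fr\'echet derivative of the map $f \mapsto -S_k^n([b_{ij}])/S_{k-1}^n([b_{ij}])$ at $\tilde f$: by the chain rule this is
$$
D\!\left(-\frac{S_k^n}{S_{k-1}^n}\right)[\tilde f](\tilde f) = -\sum_{p,q}\frac{\partial Q_k}{\partial b_{pq}}\Big|_{[b_{ij}(\tilde f)]}\cdot \frac{\partial b_{pq}}{\partial(\text{derivatives of }f)}\cdot(\text{corresponding derivatives of }\tilde f),
$$
where $Q_k = S_k^n/S_{k-1}^n$. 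Since $\tilde f$ satisfies the non-degeneracy condition ($\star\star$), the matrix in ($\star\star$) — whose entries are $-z^{3/2}\tilde f_{11}$, $z^{3/4}\tilde f_{1i}$ and $-F_{ij}$ — is uniformly positive definite, and since $Q_k$ is elliptic (monotone) on positive-definite matrices, the coefficient matrix $\partial Q_k/\partial b_{pq}$ is itself positive definite with eigenvalues bounded above and below on the relevant range. This is the source of the positive-definiteness of $[\tilde a_{ij}]$ in the conclusion.

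Next I would extract the weights. Using the asymptotics already recorded in Section~\ref{sec-th2} — namely $b_{11} = -f_{zz}/f_z^2 + o(1)$, $b_{1i}=-f_{zx_i}/f_z + (f_{x_i}f_{zz})/f_z^2 + o(1)$, and $b_{ij}=-f_{x_ix_j}+\cdots+o(1)$ for $i,j\ge 2$ — I differentiate each in the direction $\tilde f$. The key point is that $\partial b_{11}/\partial f_{zz} = -1/f_z^2$, and by ($\star\star$) one has $f_z \sim z^{-1/2}$ near $z=0$ (since $g=\sqrt u$ has $|Dg|\ge\lambda$, i.e. $u_{x_1}=2g\,g_{x_1}$ stays bounded below while $f_z = 1/u_{x_1}$, and more precisely the behavior $f_{zz}\sim z^{-3/2}$ built into ($\star\star$) forces $1/f_z^2 \sim z^2$); hence the $\tilde f_{11}$ coefficient carries a factor $\sim z^2$. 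Similarly $\partial b_{1i}/\partial f_{zx_i} = -1/f_z \sim z$, giving the factor $z$ in front of the mixed term $\tilde f_{1i}$; and $\partial b_{ij}/\partial f_{x_ix_j}$ is $O(1)$, giving an unweighted $\tilde f_{ij}$ term. The first-order terms $\partial b_{pq}/\partial f_z$ and $\partial b_{pq}/\partial f_{zx_i}$ contribute, after multiplication by the $\partial Q_k/\partial b_{pq}$ weights, coefficients of $\tilde f_1$ that are $O(z)$ (one power of $z$ is lost compared to $\tilde f_{11}$ since only one $z$-derivative is involved) and coefficients of $\tilde f_i$ that are $O(1)$; the zeroth-order dependence of $b_{pq}$ on $f$ gives the $\tilde c\,\tilde f$ term. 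Collecting everything yields precisely
$$
\tilde f_t = z^2\,\tilde a_{11}\tilde f_{11} + 2z\,\tilde a_{1i}\tilde f_{1i} + \tilde a_{ij}\tilde f_{ij} + z\,\tilde b_1\tilde f_1 + \tilde b_i\tilde f_i + \tilde c\,\tilde f,\qquad i,j\neq 1,
$$
with $[\tilde a_{ij}]$ positive definite as claimed.

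The main obstacle I expect is the bookkeeping: verifying that no \emph{extra} negative power of $z$ survives in any coefficient — in particular that the $o(1)$ remainders in the expansions of $b_{ij}$, and their derivatives, really do carry at least the weights $z^2$, $z$, $1$ on $\tilde f_{11}$, $\tilde f_{1i}$, $\tilde f_{ij}$ respectively, and that the first-order coefficients are genuinely $O(z)$ and $O(1)$ rather than $O(1)$ and $O(z^{-1})$. This requires using ($\star\star$) quantitatively, i.e. that $z^{3/2}f_{zz}$, $z^{3/4}f_{zx_i}$, $f_{x_ix_j}$ and $z\,f_z$ all extend continuously and boundedly to $z=0$ with the top-order ones bounded below, exactly as encoded in Definition~\ref{def-space} for the space $C^{2+\alpha}_{w,s}$. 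Granting those bounds, the computation is the multivariable analogue of the one carried out in dimension two in \cite{CD}, and the positive-definiteness of $[\tilde a_{ij}]$ follows from the ellipticity of $Q_k$ on the positive cone together with ($\star\star$); we therefore only indicate the computation and refer to \cite{CD} for the remaining details.
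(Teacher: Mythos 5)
The overall plan (chain rule through the $b_{ij}$, track the $z$-weights, invoke ellipticity of $Q_k$) is of the right type, but there are two compensating errors in the mechanism that together make the argument unsound despite producing the announced weights.

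First, you assert that ``the coefficient matrix $\partial Q_k/\partial b_{pq}$ is itself positive definite with eigenvalues bounded above and below on the relevant range.'' This is false and, in fact, is exactly where the degeneracy of the linearized operator comes from. At a point where $[b_{ij}]$ is diagonal, the eigenvalues of $\partial Q_k/\partial b$ are $\partial Q_k/\partial\lambda_p$, and since $\lambda_1\sim z^{-1/2}\to\infty$ (Claim~\ref{claim-eigen-beh}) one has $\partial Q_k/\partial\lambda_1 \sim \lambda_1^{-2}\sim z$, which degenerates to $0$ as $z\to 0$; only the $p\geq 2$ eigenvalues stay bounded below. So $\partial Q_k/\partial b$ does \emph{not} have a uniform lower bound, and the positive definiteness of the rescaled matrix $[\tilde a_{ij}]$ cannot simply be read off from ellipticity of $Q_k$; it requires the more detailed estimates carried out in the paper's Claim~\ref{claim-a11} and the following claim, showing $\tilde a_{11}\geq 1/(2a(z)^2)$ and $a_{ii}\geq \delta>0$.

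Second, you claim that $f_z\sim z^{-1/2}$ ``forces $1/f_z^2\sim z^2$.'' This is an arithmetic slip: from $\sqrt{z}\,f_z = a(z)$ bounded above and below (Definition~\ref{def-space}) one gets $1/f_z^2 = z/a(z)^2\sim z$, not $z^2$; similarly $1/f_z\sim\sqrt{z}$, not $z$. The two errors cancel: the weight $z^2$ on $\tilde f_{11}$ is \emph{not} ``$1$ from $\partial Q_k/\partial b$ times $z^2$ from $1/f_z^2$,'' but rather ``$z$ from $\partial Q_k/\partial\lambda_1$ times $z$ from $\partial b_{11}/\partial f_{11}\sim -1/f_z^2$'' (and an independent check that the $p\geq 2$ contributions $\partial\lambda_p/\partial f_{11}$ are themselves $O(z^2)$, which is nontrivial — see \eqref{eq-a2}). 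Likewise the weight $z$ on $\tilde f_{1i}$ comes from $\sqrt{z}$ (from $\partial Q_k/\partial b_{1i}\sim 1/\lambda_1$) times $\sqrt{z}$ (from $1/f_z$), not from $z\cdot 1$. Because the correct mechanism splits the $z$-powers differently between the two factors, the sign and lower-bound structure of $[\tilde a_{ij}]$ must be re-derived along the correct chain; as it stands the proposal does not establish positive definiteness.
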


\begin{proof}
The proof of the proposition   relies on a  computation done with mathematica and we 
will just briefly outline its  steps. Let the linearization  of (\ref{eq-f}) around a point $f \in 
 C^{2+\alpha}_{w,s}$ which satisfies the non-degeneracy condition ($\star\star$), be
$$\tilde{f}_t = a_{11} \tilde{f}_{11} + 2a_{1i}  \tilde{f}_{1i} + 
a_{ij} \tilde{f}_{ij} + b_{1} \tilde{f}_{1} + b_i \,  \tilde{f}_{i} + c \, \tilde{f},\qquad 
 i,j \neq 1.$$
Notice that the linearized coefficients are given by
\begin{equation}
\label{eq-lin-coeff}
a_{ij} = \sum_{p=1}^n\frac{\partial Q_k}{\partial\lambda_p}\, \frac{\partial\lambda_p}{\partial f_{ij}},
\qquad 
b_i = \sum_{p=1}^n\frac{\partial Q_k}{\partial\lambda_p}\,\frac{\partial\lambda_p}{\partial f_i},
\qquad c = \sum_{p=1}^n\frac{\partial Q_k}{\partial\lambda_p}\,\frac{\partial\lambda_p}{\partial f}.
\end{equation}
It  has been computed in \cite{D} that
\begin{eqnarray*}
\frac{\partial Q_k}{\partial\lambda_p} &=& \frac{1}{S_{k-1}^n(\lambda)^2}(S_{k-1,i}^n(\lambda)^2 - S_{k,i}^n(\lambda)S_{k-2,i}^n(\lambda)) \\
&\ge& \frac{n}{k(n-k+1)}\, \left (\frac{S_{k-1,i}^n(\lambda)}{S_{k-1}^n(\lambda)} \right )^2
\end{eqnarray*}
where $S_{k,i}^n(\lambda)$ denotes the sum of all terms of $S_k^n(\lambda)$ not containing the factor $\lambda_i$.
It follows by  the Claim \ref{claim-eigen-beh} and the above inequality that  there are uniform constants $\tilde{C}_1, \tilde{C}_2 > 0$ so that
$$\frac{\tilde{C}_1}{\lambda_1^2} \le \frac{\partial Q_k}{\partial\lambda_1} \le \frac{\tilde{C}_2}{\lambda_1^2} \qquad \mbox{and} \qquad   \tilde{C}_1 \le \frac{\partial Q_k}{\partial\lambda_p} \le \tilde{C}_2, \quad  \forall p\ge 2.$$
Hence, by  Claim \ref{claim-eigen-beh}, we have  
\begin{equation}
\label{eq-coeff-speed}
C_1\, z \le \frac{\partial Q_k}{\partial\lambda_1} \le C_2\,  z \qquad \mbox{and} \qquad 
C_1 \le \frac{\partial Q_k}{\partial\lambda_p} \le C_2, \quad  \forall p\ge 2
\end{equation}
for uniform  $C_1, C_2 > 0$. Since $f \in C^{2+\alpha}_{w,s}$ and  satisfies the non-degeneracy condition ($\star\star$),
it follows by  Definition \ref{def-space} that 
\begin{equation}
\label{eq-beh-many}
 \sqrt{z}\, f_z  = a(z), \qquad    z^{\frac 32}\, f_{zz} = - v(z)
\end{equation}
and
\begin{equation}
\label{eq-beh-many2}
\frac 1{\sqrt{z}}\, f_{x_i} =  h_i(z),  \quad    \sqrt{z}\, f_{zx_i} = u_i(z), \quad  f_{x_ix_i} = -C_i(z),
\qquad 2\le i \le n
\end{equation}
where $v(z), a(z), C_i(z), h_i(z), u_i(z)$ are continuous functions at $z = 0$
and 
\begin{equation}
\label{eq-nozero}
v(z), a(z), C_i(z) \geq \delta >0.
\end{equation} 

\begin{claim}
\label{claim-a11}
The coefficient $a_{11}$ satisfies $a_{11} = z^2\, \tilde{a}_{11}\, (1+o(1))$,
where $\tilde{a}_{11}(z)$ is a strictly positive and continuous
function in a neighborhood of $z=0$.
\end{claim}

\begin{proof}
By (\ref{eq-coeff-speed}) we have
$$a_{11} = \left (z\,\frac{\partial\lambda_1}{\partial f_{11}} + \sum_{p\ge 2}C_i\, 
\frac{\partial\lambda_p}{\partial f_{11}} \right )\, (1+o(1))$$
where $C_i(z) \ge C_i > 0$ in a  neighborhood  of $z = 0$.
To compute $\frac{\partial \lambda_p}{\partial f_{lk}}$ we will use the fact that  $\lambda_p$ are
the eigenvalues of $[b_{ij}]$, namely 
\begin{equation}\label{eqn-above1}
\det(b_{ij} - \lambda_p\, I_{ij}) = 0
\end{equation}
where the matrix $[b_{ij}]$ is defined in section  \ref{sec-th2}.
If we differentiate \eqref{eqn-above1} with respect to $f_{lk}$ we get
\begin{equation}
\label{eq-formula}
\frac{\partial \lambda_p}{\partial f_{lk}} = \frac{\sum_{i,j} M^{ij}\frac{\partial b_{ij}}{\partial f_{lk}}}
{\sum_i M^{ii}}
\end{equation}
where $M^{ij}$ denotes  the $ij$-th minor of the matrix $[b_{ij} - \lambda_pI_{ij}]$.
A direct calculation yields to
$$\frac{\partial b_{11}}{\partial f_{11}} = -\frac{1}{f_z^2} + O(z^3), \qquad  
\frac{\partial b_{1i}}{\partial f_{11}} = \frac{f_{x_i}}{f_z^2} + O(z^3), \quad  i\ge 2$$
and 
$$\frac{\partial b_{jj}}{\partial f_{11}} = -\frac{f_{x_j}^2}{f_z^2} + O(z^3), \quad  j\ge 2, \qquad 
 \frac{\partial b_{ij}}{\partial f_{11}} = -\frac{f_{x_i}f_{x_j}}{f_z^2} + O(z^3), \quad  i\neq j \ge 2.$$
Moreover, by (\ref{eq-formula}) and (\ref{eq-beh-many})-(\ref{eq-beh-many2}) we have 
\begin{equation}
\label{eq-a1}
\frac{\partial\lambda_1}{\partial f_{11}} = -\frac{z}{a^2(z)}\, (1+o(1)).
\end{equation}
We claim that for $j\ge 2$,
$$b_{jj} - \lambda_j = e_j(z)\,\sqrt{z}, \,\,\, e_j(z) \ge 0 \qquad
\mbox{in a neighborhood  of} \,\,\, z = 0.$$  
To this end, we  notice that as in (\ref{eq-eigenvalue}) we have
\begin{equation}
\label{eq-det0}
\frac{c_1(z)}{\sqrt{z}}\,\Pi_{j\ge 2}(b_{jj}-\lambda_p) = 
\sum_{i\ge 2}c_i^2\,\Pi_{j\neq i, j\ge 2}(b_{jj}-\lambda_p)\,
(1+o(1))
\end{equation}
where $c_1(z) > 0$ in  a neighborhood of $z = 0$ and $p \ge 2$. We
may assume $b_{ii}(z) \neq b_{pp}(z), \,\,\, i\neq p$, since otherwise
we can divide (\ref{eq-det0}) by $(b_{pp}-\lambda_p)^k$, if there are
$k+1$ indices $i_i,\dots,i_{k+1}$ so that $b_{i_si_s} = b_{pp}$ for $1
\le s \le k+1$. We can rewrite (\ref{eq-det0}) as
$$\frac{c_1(z)}{\sqrt{z}} - \sum_{2 \le i\neq p}\frac{c_i^2}{b_{ii}-\lambda_p} = \frac{c_p^2}{b_{pp}-\lambda_p}.$$
Since all the terms in the sum on the left hand side are bounded and since the coefficient $ c_1(z) \ge \delta > 0$,
the left hand side is strictly positive in a  neighborhood of $z = 0$. This implies the bound 
$b_{pp} - \lambda_p \ge 0$ in a  neighborhood  of $z = 0$. By direct calculation we have 
\begin{equation}
\label{eq-a2}
\frac{\partial\lambda_p}{\partial f_{11}} = -\frac{z^2}{a(z)^2\, v(z)}
(a(z)^2\, e(z) + 2a(z)u_i(z)h_i(z) + 3h_i^2(z)v(z))\, (1+o(1)).
\end{equation} 
We claim $a(z)u_i(z)h_i(z) \ge 0$ in a  neighborhood of $z=0$. To this end, we 
recall  that by (\ref{eq-nozero}), we have $a(z) \ge \delta $ and that by \eqref{eq-beh-many2}
we have 
$$u_i(z)h_i(z) = \sqrt{z} f_{zx_i}\, \frac{f_{x_i}}{\sqrt{z}} = \frac{(f_{x_i}^2)_z}{2}.$$
Since $f_{x_i}^2(0,x_2,\dots,x_n,t) = 0$, $f_{x_i}^2 \ge 0$ and since
$u_i, h_i$ are continuous functions at $z = 0$,  we conclude that  $u_i(z)h_i(z)
\ge 0$ in a  neighborhood of $z=0$. 

Combining 
(\ref{eq-lin-coeff}), (\ref{eq-coeff-speed}), (\ref{eq-a1}) and (\ref{eq-a2}), we conclude that
$$a_{11} = \frac{z^2 \, [v(z) + \sum_{i\ge 2}(2a(z)^2e_i(z) + 
2a(z)u_i(z)h_i(z) + 3v(z)h_i^2(z))]}{a(z)^2\, v(z)}\, (1+o(1))$$
that is, $a_{11} = \tilde{a}_{11}\, z^2$, where 
$$\tilde{a}_{11}(z) = \frac{(v(z) + \sum_{i\ge 2}(2a(z)^2e_i(z) + 
2a(z)u_i(z)h_i(z) + 3v(z)h_i^2(z)))}{a^2(z)v(z)}\, (1+o(1))$$  
and hence
$$\tilde{a}_{11}(z) \ge \frac{1}{2a^2(z)}.$$
\end{proof}

\begin{claim}
The coefficients $a_{ii}$, $i\ge 2$,  are continuous and satisfy  the lower bound $a_{ii}(z) \ge \delta > 0$ in a neighborhood of $z=0$. 
\end{claim}

\begin{proof}
To simplify the notation, let us assume that $i=2$. Similarly as before, we have 
\begin{equation}
\label{eq-lin-a22}
a_{22} = \sum_{p\ge 1}\frac{\partial Q_k}{\partial\lambda_p}\,
\frac{\partial\lambda_p}{\partial f_{22}}.
\end{equation}
A direct calculation shows that
$$\frac{\partial b_{jj}}{\partial f_{22}} = -(1+o(1)), \quad \mbox{for}  \,\, j\ge 2, \qquad
\frac{\partial b_{ij}}{\partial f_{22}} = o(1) \quad  \mbox{in all other cases}.$$
Similar analysis as before yields to 
\begin{equation}
\label{eq-asymp-beh}
\frac{\partial\lambda_1}{\partial f_{22}} = O(z), \qquad  
\frac{\partial\lambda_p}{\partial f_{22}} = -(1+o(1)), \quad  p\ge 2.
\end{equation}
By (\ref{eq-coeff-speed}), (\ref{eq-lin-a22}) and (\ref{eq-asymp-beh})  we get
$$a_{22}(z) = z\, O(z) + \sum_{p\ge 2}\eta_p(z)(1+o(1))$$
where $\eta_p(z) \ge C_1 > 0$. This immediately implies the claim.
\end{proof}

The behaviour of all other linearized coefficients is obtained similarly. This 
finishes the proof of Proposition \ref{lem-lin-eq}.
\end{proof}

\end{document}